\newcommand{\lf}{\mathrm{lf}}
\newcommand{\gen}{{\rm gen}}
\newcommand{\fgen}{{\rm fg}}
\newcommand{\FI}{\mathbf{FI}}
\newcommand{\OI}{\mathbf{OI}}
\author{Steven V Sam}
\address{Department of Mathematics, University of California, San Diego, CA}
\email{\href{mailto:ssam@ucsd.edu}{ssam@ucsd.edu}}
\urladdr{\url{http://math.ucsd.edu/~ssam/}}
\thanks{SS was supported by NSF grant DMS-1849173.}
\author{Andrew Snowden}
\address{Department of Mathematics, University of Michigan, Ann Arbor, MI}
\email{\href{mailto:asnowden@umich.edu}{asnowden@umich.edu}}
\urladdr{\url{http://www-personal.umich.edu/~asnowden/}}
\thanks{AS was supported by NSF grant DMS-1453893.}
\title[Sp-equivariant modules]{Sp-equivariant modules over polynomial rings\\ in infinitely many variables}
\date{June 11, 2021}
\subjclass[2010]{%
  13E05, 
  13A50
}
\begin{document}

\begin{abstract}
We study the category of $\Sp$-equivariant modules over the infinite variable polynomial ring, where $\Sp$ denotes the infinite symplectic group. We establish a number of results about this category: for instance, we show that every finitely generated module $M$ fits into an exact triangle $T \to M \to F \to$ where $T$ is a finite length complex of torsion modules and $F$ is a finite length complex of ``free'' modules; we determine the Grothendieck group; and we (partially) determine the structure of injective modules. We apply these results to show that the twisted commutative algebras $\Sym(\bC^{\infty} \oplus \lw^2{\bC^{\infty}})$ and $\Sym(\bC^{\infty} \oplus \Sym^2{\bC^{\infty}})$ are noetherian, which are the strongest results to date of this kind. We also show that the free 2-step nilpotent twisted Lie algebra and Lie superalgebra are noetherian.
\end{abstract}

\maketitle
\tableofcontents

\section{Introduction}

In \cite{brauercat}, we study the representation theory of several categories modeled on Brauer algebras and their variants. We discover that this theory is deeply connected to several other branches of representation theory, including supergroups, parabolic category $\cO$, and twisted commutative algebras. While the first two have been studied extensively, twisted commutative algebras have only recently begun to be studied because of their importance in representation stability. Several of the examples relevant to \cite{brauercat} were considered in our previous work \cite{sym2noeth, periplectic}. However, one was not, and the starting point of this paper is to analyze it. (Actually, it is the twisted Lie algebra $\fg$ discussed below that is most relevant.)

\subsection{Results on tca's}

Let $\Rep^{\pol}(\GL)$ be the category of polynomial representations of the infinite linear group $\GL$ (over the complex numbers), and let $\bV$ be the standard representation of $\GL$. For the purposes of this paper, a {\bf twisted commutative algebra} (tca) is an algebra object in this category. A major open problem in tca theory is determining if finitely generated tca's are noetherian. This is shown to be the case for so-called bounded tca's in \cite{delta-mod}, and for the tca's $\Sym(\lw^2{\bV})$ and $\Sym(\Sym^2{\bV})$ in \cite{sym2noeth} (see also \cite{periplectic} for some related results). Before this paper, these were the only known cases. We add two more:

\begin{theorem} \label{thm:tca}
The tca's $\Sym(\bV \oplus \lw^2{\bV})$ and $\Sym(\bV \oplus \Sym^2{\bV})$ are noetherian.
\end{theorem}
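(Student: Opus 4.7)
The plan is to translate noetherianity of these tca's into noetherianity statements for categories of equivariant modules over polynomial rings in infinitely many variables, and then to apply the structural theorems developed in the body of the paper. The key input is the first fundamental theorem of invariant theory for $\Sp$: if $E$ denotes a countable-dimensional symplectic space on which $\Sp$ acts, then $\Sym(E \otimes \bV)^{\Sp} = \Sym(\lw^2 \bV)$, and one expects an induced equivalence between $\Sym(\lw^2 \bV)$-modules in $\Rep^{\pol}(\GL)$ and $\Sp$-equivariant $\Sym(E \otimes \bV)$-modules in $\Rep^{\pol}(\GL)$, realized by $\Sp$-invariants in one direction and base change in the other. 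Under this equivalence, modules over $\Sym(\bV \oplus \lw^2\bV) = \Sym(\bV) \otimes \Sym(\lw^2\bV)$ correspond to $\Sp$-equivariant modules over the polynomial ring $R = \Sym(\bV \oplus (E \otimes \bV))$, so noetherianity of the tca reduces to local noetherianity of the category of $\Sp$-equivariant $R$-modules.

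To establish the latter, I would invoke the structural result summarized in the abstract: any finitely generated $\Sp$-equivariant $R$-module $M$ fits into an exact triangle $T \to M \to F$, with $T$ a finite complex of torsion modules and $F$ a finite complex of ``free'' modules. It then suffices to prove ACC on subobjects in the torsion subcategory and in the free subcategory separately. For the torsion part, modules are supported on subvarieties cut out by vanishing of iterated symplectic contractions, and one inducts on the rank of the support to reduce to noetherianity over classical polynomial rings in finitely many variables. For the free part, a free module is controlled by a finite amount of combinatorial/representation-theoretic data (essentially a multiset of partitions), so ACC should follow either from a well-quasi-ordering argument on this data or from reducing to the known noetherianity of bounded tca's \cite{delta-mod}.

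The case of $\Sym(\bV \oplus \Sym^2 \bV)$ is handled by the parallel argument, replacing $\Sp$ by the orthogonal group and $\lw^2$ by $\Sym^2$, using the first fundamental theorem for $\mathrm{O}$ and the orthogonal analogue of the structural theorem (which should follow from the paper's methods essentially verbatim, since the $\Sp$-theory and $\mathrm{O}$-theory of polynomial rings in infinitely many variables tend to run in parallel).

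The main obstacle will be controlling the free part $F$ and verifying that the first-fundamental-theorem equivalence sends $F$ to something tractable on the tca side. In earlier tca noetherianity proofs such as \cite{sym2noeth}, this is the step where combinatorial or shift-theoretic arguments are crucial, and it is exactly where the detailed description of free modules, injectives, and the Grothendieck group announced in the abstract is needed. Everything else in the proof is reasonably standard once the structural triangle and the equivalence are in place.
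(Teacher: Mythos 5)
Your high-level picture is right: the paper does translate noetherianity of $B=\Sym(\bV\oplus\lw^2\bV)$ into a question about $\Sp$-equivariant $A=\Sym(\bV)$-modules, uses the structural results on $\Mod_A$ (in particular the triangle $\rR\Gamma(M)\to M\to\rR\Sigma(M)\to$ and the description of torsion-free injectives as the modules $V\otimes A$), and handles the $\Sym^2$ case by a parallel orthogonal argument. But there are two substantive gaps in your plan.

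First, the passage between $B$-modules and $\Sp$-equivariant $A$-modules is not a first-fundamental-theorem equivalence by taking $\Sp$-invariants; it is the \emph{Serre quotient} equivalence from \cite{sym2noeth}: the functor $\Phi(M)=M\otimes_{B_2}\bC$ (specialization along the symplectic form, with $B_2=\Sym(\lw^2\bV)$) is exact, has kernel exactly the $B_2$-torsion modules, and induces $\Mod_{B_2}^{\gen}\cong\Rep(\Sp)$. This localized equivalence, not an invariant-theoretic one, is what the paper lifts to an adjoint pair $\wt\Phi\colon\Mod_B\rightleftarrows\Mod_A\colon\wt\Psi$. The distinction matters because $\Phi$ kills $B_2$-torsion, and controlling what is lost under $\Phi$ (and what reappears under $\rR\Psi$) is precisely where the work lies.

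Second, and more seriously, your strategy for proving ACC (``prove ACC on subobjects in the torsion subcategory and in the free subcategory separately'') does not reduce the noetherianity of $B$ to the structural triangle in the way you suggest. The triangle is a statement in the derived category of $\Mod_A$, not a two-step filtration of a given $B$-module, and ``ACC for free $B$-modules $V\otimes B$'' \emph{is} the noetherianity of $B$, so invoking it would be circular. The paper's actual mechanism is the Tor-finiteness criterion: declare that a $B$-module $M$ satisfies (FT) if each $\Tor^B_i(M,\bC)$ has finite length (this implies finite generation), and let $\cS$ be the class of $A$-modules $N$ with $\rR^i\Psi(N)$ satisfying (FT) for all $i$. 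One shows $\cS$ is closed under extensions, contains the generators $\bV_\lambda$ and $\bV_\lambda\otimes A$ of $\rD^b_{\fgen}(\Mod_A)$ (using that $\Psi(\bV_\lambda\otimes A)=\bV_\lambda\otimes B$ and $\rR^{>0}\Psi$ vanishes on $B_2$-projectives), and hence contains all finitely generated $A$-modules by a dévissage through the triangle and through finite injective resolutions in $\Mod_A^{\gen}$. Given a submodule $N\subset\bV_\lambda\otimes B$, one then applies $\Phi$, uses $A$-noetherianity (Proposition~\ref{prop:Anoeth}) to get $\Phi(N)$ finitely generated, concludes $\Psi(\Phi(N))$ satisfies (FT), and finally compares $N$ with $\Psi(\Phi(N))$ via a $B_2$-torsion cokernel (which satisfies (FT) because $B_2$-torsion modules live over \emph{bounded} tca's in the sense of \cite{sym2noeth}, whence Lemma~\ref{lem:Bnoeth-2}). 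Note that the bounded-tca reduction you propose for the free part actually applies to the \emph{torsion} part; the free $A$-modules $V\otimes A$ are handled by the already-known noetherianity of $A$ via restriction to the infinite symmetric group, not by a combinatorial well-quasi-ordering of multisets of partitions. Without the (FT) bookkeeping and the adjoint-pair lift from $\Mod_A$ back to $\Mod_B$, your outline does not close.
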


We also establish some properties of these tca's that are known in other cases, e.g., projective modules are injective, and the generic category is equivalent to the category of modules supported at~0. We suspect our methods would allow one to prove that the tca's $\Sym(\bV^{\oplus n} \oplus \lw^2{\bV})$ and $\Sym(\bV^{\oplus n} \oplus \Sym^2{\bV})$ are noetherian for any $n$, though we have not pursued this.

Let $\fg$ be the free 2-step nilpotent Lie algebra $\bV \oplus \lw^2{\bV}$ in the category $\Rep^{\pol}(\GL)$. The following result is an easy consequence of Theorem~\ref{thm:tca} and was our original motivation:

\begin{theorem}
The category of $\fg$-modules in $\Rep^{\pol}(\GL)$ is locally noetherian.
\end{theorem}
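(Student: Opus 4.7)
The strategy is to combine Theorem~\ref{thm:tca} with the standard filtered-algebra argument for noetherianity of enveloping algebras. A $\fg$-module in $\Rep^{\pol}(\GL)$ is the same as a module over the enveloping tca $U(\fg)$, defined as the quotient of the tensor algebra $T(\fg)$ by the usual Lie relations. The PBW theorem goes through in the symmetric monoidal category $\Rep^{\pol}(\GL)$ and endows $U(\fg)$ with an ascending filtration $U_{\le 0} \subseteq U_{\le 1} \subseteq \cdots$ by subobjects whose associated graded is the commutative tca $\Sym(\fg) = \Sym(\bV \oplus \lw^2{\bV})$, which is noetherian by Theorem~\ref{thm:tca}. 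This is the only nonformal input.

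Given a finitely generated $U(\fg)$-module $M$, pick a surjection $U(\fg) \otimes V \to M$ with $V$ a finite length polynomial representation, and endow $M$ with the induced filtration $M_{\le n} = \mathrm{image}(U_{\le n} \otimes V)$, which is exhaustive and bounded below. Then $\mathrm{gr}(M)$ is a finitely generated, hence noetherian, $\Sym(\fg)$-module. For any subobject $N \subseteq M$ the induced filtration $N_{\le n} = N \cap M_{\le n}$ yields an embedding $\mathrm{gr}(N) \hookrightarrow \mathrm{gr}(M)$ of $\Sym(\fg)$-modules. The usual filtration trick then shows that $\mathrm{gr}$ reflects strict inclusions: if $N \subsetneq N'$ and $\mathrm{gr}(N) = \mathrm{gr}(N')$, pick $x \in N' \setminus N$ of minimal filtration degree $j$; the equality in degree $j$ yields $y \in N \cap M_{\le j}$ with $x - y \in N' \cap M_{\le j-1}$, contradicting minimality. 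Consequently, any ascending chain of subobjects of $M$ lifts to an ascending chain in $\mathrm{gr}(M)$, which stabilizes; so $M$ is noetherian, and the category of $\fg$-modules is locally noetherian.

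All the real mathematical content lies in Theorem~\ref{thm:tca}; the filtration argument above is formal. I expect no serious obstacle beyond confirming that the categorical PBW construction and the induced filtrations genuinely yield honest subobjects in $\Rep^{\pol}(\GL)$, which is routine because the category is semisimple $\bC$-linear symmetric monoidal and so behaves formally like the category of vector spaces for these purposes.
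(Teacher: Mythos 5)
Your proposal is correct and takes essentially the same approach as the paper: both use the PBW filtration on $\cU(\fg)$, identify the associated graded with the tca $B = \Sym(\bV \oplus \lw^2\bV)$, and deduce noetherianity from Theorem~\ref{thm:tca}. The only (immaterial) difference in bookkeeping is that the paper directly lifts generators of $\gr(N)$ for a submodule $N \subseteq \bV_\lambda \otimes \cU(\fg)$, while you argue via stabilization of ascending chains using the fact that $\gr$ reflects strict inclusions.
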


This theorem can be equivalently stated as: the module category for the upwards spin-Brauer category (as defined in \cite{spincat}) is locally noetherian. This will be used in \cite{brauercat}.

\subsection{$\Sp$-equivariant modules}

We showed in \cite{sym2noeth} that $\GL$-equivariant modules over $\Frac(\Sym(\lw^2{\bV}))$ (with a polynomiality condition) are equivalent to algebraic representations of the infinite symplectic group $\Sp$ (see \S \ref{ss:repSp} for the definition of algebraic). Thus if $M$ is a module over the tca $\Sym(\bV \oplus \lw^2{\bV})$ then we can tensor it up to $\Frac(\Sym(\lw^2{\bV}))$ and pass through this equivalence to obtain an $\Sp$-equivariant module over the algebra $\Sym(\bV)$. Due to this, most of the work in proving Theorem~\ref{thm:tca} involves studying such modules. We now outline our results in this direction.

Let $\Rep(\Sp)$ denote the category of algebraic representations of $\Sp$ and let $A=\Sym(\bV)$, regarded as an algebra object in $\Rep(\Sp)$. Let $\Mod_A$ be the category of $A$-modules in the category $\Rep(\Sp)$. An $A$-module is called {\bf torsion} if every element has non-zero annihilator. The category $\Mod_A^{\tors}$ of torsion modules is a Serre subcategory of $\Mod_A$, and so we can consider the quotient $\Mod_A^{\gen}$, which we call the {\bf generic category}. Our strategy for understanding $\Mod_A$ is to first understand $\Mod_A^{\gen}$. This idea is motivated by our previous work \cite{symc1}, where we study $\GL$-equivariant modules over $\Sym(\bV)$.

Consider $\Sp_{2n}$ acting on the polynomial ring in $2n$ variables. Equivariant modules correspond to equivariant quasi-coherent sheaves on $\bA^{2n}$, and torsion modules correspond to sheaves supported at~0. The Serre quotient category is therefore equivalent to the category of $\Sp_{2n}$-equivariant sheaves on $\bA^{2n} \setminus \{0\}$. Since $\Sp_{2n}$ acts transitively on this space, such sheaves correspond to representations of any stabilizer group.

Taking this picture as our guide, we let $\xi \in \Spec(A)=\bV^*$ be the functional on $\bV$ defined by $\xi(e_i)=\xi(f_i)=1$ for $i \ge 1$ (where $\{e_i, f_i\}_{i \ge 1}$ is our standard symplectic basis of $\bV$), and we let $H \subset \Sp$ be its stabilizer. We define a category $\Rep(H)$ of algebraic representations of $H$, and prove:

\begin{theorem} \label{thm:gen0}
We have a natural equivalence of categories $\Mod_A^{\gen} \cong \Rep(H)$.
\end{theorem}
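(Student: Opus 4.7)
Following the transitive-orbit heuristic recalled just before the statement, the natural candidate for the equivalence is a ``fiber at $\xi$'' functor $\Phi \colon \Mod_A \to \Rep(H)$, $M \mapsto M/\mathfrak{m}_\xi M = M \otimes_A \bC_\xi$, where $\mathfrak{m}_\xi = \ker(A \xrightarrow{\mathrm{ev}_\xi} \bC)$. Since $H = \mathrm{Stab}_{\Sp}(\xi)$ preserves $\mathfrak{m}_\xi$, this quotient carries a natural $H$-action, and its algebraicity should follow from a finite generation argument applied to the $\Sp$-action on $M$. The first essential step is to show that $\Phi$ kills torsion: if $a \cdot m = 0$ with $a \neq 0$, then by $\Sp$-equivariance the whole $\Sp$-stable ideal $I = \langle \Sp \cdot a \rangle$ annihilates the $\Sp$-submodule generated by $m$, and genericity of $\xi$ should imply that no nonzero element of $A$ vanishes along the entire $\Sp$-orbit of $\xi$, so $I$ contains an element with $\mathrm{ev}_\xi \neq 0$, forcing $m \in \mathfrak{m}_\xi M$. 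Hence $\Phi$ descends to $\overline{\Phi} \colon \Mod_A^{\gen} \to \Rep(H)$.

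The crux is the construction of a quasi-inverse $\Psi$. Geometrically, $V \in \Rep(H)$ determines an $\Sp$-equivariant vector bundle on the orbit $\Sp \cdot \xi \subset \bV^*$ (isomorphic to $\Sp/H$), which one wants to realize as an $A$-module modulo torsion. One natural candidate is to take $\Psi(V)$ to be the $A$-submodule of the algebraic coinduced representation $\mathrm{Hom}_H(\Sp, V)^{\alg}$ generated by the $H$-equivariant copy of $V$ sitting at the $\xi$-fiber, with $A$ acting by evaluating polynomials along $\Sp$-translates of $\xi$; equivalently, picking an $H$-equivariant splitting $A = \bC \cdot 1 \oplus \mathfrak{m}_\xi$, one puts a prescribed $A$-module structure on $V$ (with $\mathfrak{m}_\xi$ acting as zero on the $\xi$-fiber) and then spreads it out under $\Sp$. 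By construction the composite $\overline{\Phi} \circ \Psi$ recovers $V$.

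The converse $\Psi \circ \overline{\Phi} \cong \mathrm{id}$ is where I expect the main obstacle to lie. One must show that for every $M \in \Mod_A$ the natural unit map $M \to \Psi(\Phi(M))$ has torsion kernel and cokernel, i.e., that $M$ is determined up to torsion by its $\xi$-fiber as an $H$-representation. I would proceed by d\'evissage: reduce first to finitely generated $M$, then ultimately to $M = A$ itself, and verify directly that the image of $A$ generates $\Mod_A^{\gen}$ with endomorphism algebra matching the trivial object $\bC \in \Rep(H)$. The technical heart is to replace the automatic equivariant local triviality present in the finite-dimensional transitive-action picture by an analogue valid in the infinite-dimensional setting, using the specific structure of $\Sp$ acting on $\bV^*$ near the generic point $\xi$.
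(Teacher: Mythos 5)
Your overall shape of the argument matches the paper's: the equivalence is implemented by the fiber functor $\Phi(M)=M/\fm_\xi M$, its right adjoint $\Psi$ is essentially coinduction from $H$ (the paper notes $\Psi(V)=\Hom_{\cU(\fh)}(\cU(\fsp),V)^{\alg}$), and the real work is in showing that the unit has torsion kernel and cokernel. You also correctly put your finger on where the heart of the matter lies --- replacing equivariant local triviality of the transitive-orbit picture with something valid in this infinite-dimensional setting. But the proposal leaves precisely that heart as a black box, and a couple of intermediate steps as stated would not go through.

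First, and most seriously, you never address exactness of $\Phi$. For $\Phi$ to descend to a functor on the Serre quotient and for Gabriel's localization criterion to apply, one needs $\Phi$ exact, not just that it kills torsion. This is not formal: it is the content of the paper's Theorem~\ref{thm:loc}, that $M_{\fm}\to M/\fm M\otimes A_{\fm}$ is an isomorphism of $|A_{\fm}|$-modules (in particular $M_{\fm}$ is free), proved in \S\ref{s:loc} via the decomposition $\fsp=\fk\oplus\fh$ and the comultiplication map $\phi_M\colon M\to M/\fm M\otimes\bC[K]$. Without this, $M\mapsto M/\fm M$ is only right exact and the whole strategy collapses. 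Second, your d\'evissage ``ultimately to $M=A$ itself, matching endomorphism algebras with $\bC\in\Rep(H)$'' is too weak: the category $\Mod_A^{\gen}$ is generated by all $T(\bV_\lambda\otimes A)$, not by $T(A)$ alone, and the needed full faithfulness is exactly the dimension count $\dim\Hom_A(\bV_\lambda\otimes A,\bV_\mu\otimes A)=\dim\Hom_H(\bV_\lambda,\bV_\mu)$ of Proposition~\ref{prop:dimcount}. That identity in turn relies on the structure theory of $\Rep(H)$ developed in \S\ref{s:repH} (classification of simples and injectives via the diagram category $\cC$ and the equivalence with $\Mod_B^{\lf}$). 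None of this is supplied or even signposted in the proposal. Finally, be careful with the phrase ``genericity of $\xi$'': as the paper stresses in \S\ref{ss:xi}, the theorem is false for $\xi'\in\im(\iota)$ (e.g.\ $\xi'=\omega(e_1,-)$), so the argument must exploit the specific $\xi$ with $\xi(e_i)=\xi(f_i)=1$ rather than a soft nondegeneracy hypothesis. Your torsion-killing sketch does survive because the only nonzero $\Sp$-stable ideals of $A$ are powers of $A_+$, which are not contained in $\fm_\xi$; but that uses irreducibility of $\Sym^k\bV$ as an $\Sp$-module, a fact you'd still need to invoke.
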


While we have just explained why this theorem is, in a sense, intuitively obvious, it is technically demanding to prove, and there is an important subtlety that the heuristic does not capture: the theorem is false for certain other choices of $\xi$. See \S \ref{ss:xi} for more.

In \cite{infrank}, we studied algebraic representations of infinite rank classical groups. Using the methods of that paper, we analyze algebraic representations of $H$, and prove a number of results:
\begin{itemize}
\item The representations $\bS_{\lambda}(\bV)$ are injective in $\Rep(H)$. Here $\bS_{\lambda}$ denotes a Schur functor.
\item The simple objects of $\Rep(H)$ can be obtained by a variant of Weyl's construction.
\item Every finite length object of $\Rep(H)$ has finite injective dimension.
\item $\Rep(H)$ is equivalent to the category of modules over the twisted commutative algebra $\Sym(\bV \oplus \lw^2{\bV})$ supported at~0.
\item While we do not actually carry out the details to prove this, our results imply that $\Rep(H)$ is the universal $\bC$-linear tensor category equipped with an object $\bV$ admitting a pairing $\lw^2{\bV} \to \bC$ and a functional $\bV \to \bC$. (See \cite[4.4.2]{infrank} for a similar result.)
\end{itemize}
Via Theorem~\ref{thm:gen0}, we can transfer all of these results back to $\Mod_A^{\gen}$.

With these results about $\Mod_A^{\gen}$ in hand, we turn our attention back to the main category of interest, $\Mod_A$. We prove a number of results, such as:
\begin{itemize}
\item If $M$ is a finitely generated $A$-module then there is an exact triangle $T \to M \to F \to$, where $T$ is a finite length complex of finite length $A$-modules and $F$ is a finite length complex of $A$-modules of the form $V \otimes A$, where $V$ is a polynomial representation of $\GL$ (restricted to $\Sp$). This is somewhat analogous to the structure theorem for modules over a PID, though only works at the derived level. It is a very powerful structural result for $A$-modules: indeed, the analog in the $\GL$-case (or for $\FI$-modules) has proven to be one of the most important tools in that theory.
\item We determine the Grothendieck group of the category $\Mod_A$: it is naturally a module over the ring of symmetric functions, and, as such, free of rank two. The classes $[A]$ and $[\bC]$ form a basis. (Here $\bC$ is an $A/A_+$-module.)
\item We show that every injective $A$-module $I$ decomposes as $I' \oplus I''$, where $I'$ is a torsion injective module and $I''$ is a torsion-free injective $A$-module. We show that the torsion-free injective $A$-modules are exactly the modules of the form $V \otimes A$ where $V$ is a polynomial representation of $\GL$. Thus, in a sense, free $A$-modules are injective. We have not been able to determine the structure of torsion injectives; however, we show by example that the indecomposable ones need not be finite length. (We also show that finite length $A$-modules need not have finite injective dimension.)
\item We define a version of local cohomology for $A$-modules at the maximal ideal $A_+$. We show that if $M$ is a finitely generated $A$-module then all of its local cohomology groups have finite length, and only finitely many of them are non-zero.
\end{itemize}
These results are sufficient to allow us to prove Theorem~\ref{thm:tca}, using a method similar to the one employed in \cite{sym2noeth} to prove noetherianity of $\Sym(\lw^2{\bV})$.

\begin{remark}
While our interest in $\Sp$-equivariant $\Sym(\bV)$-modules was to prove Theorem~\ref{thm:tca}, they may well turn out to be of interest. Indeed, they are closely related to $\GL$-equivariant modules over $\Sym(\bV)$, and these are equivalent to $\FI$-modules \cite{fimodule}, which have seen numerous applications.
\end{remark}

\subsection{Open problems}

Here are a few interesting problems that we have not addressed:
\begin{itemize}
\item Compute the derived section functor on simple objects of $\Mod_A^{\gen}$. The analogous result in the $\GL$-case appears in \cite[\S 7.4]{symc1}.
\item Describe the indecomposable injective objects in $\Mod_A^{\tors}$.
\item Describe the structure of injective resolutions in $\Mod_A^{\tors}$.
\item Compute the $\Ext$ groups between simple $A$-modules. In the one computation we have done (Example~\ref{ex:trivial-ext}), the result is 2-periodic. Does this happen more generally?
\end{itemize}

\subsection{Outline}

In \S \ref{s:bg}, we review relevant background information and prove some simple preliminary results. In \S \ref{s:repH}, we study the representation category of the stabilizer group $H$. In \S \ref{s:loc}, we study the local structure of $A$-modules at the point $\xi \in \Spec(A)$. In \S \ref{s:gen}, we study the generic category $\Mod_A^{\gen}$ and show that it is equivalent to $\Rep(H)$. In \S \ref{s:Amod}, we apply our work on the generic category to deduce results about $A$-modules. In \S \ref{s:Bnoeth}, we prove that the tca $\Sym(\bV \oplus \lw^2{\bV})$ is noetherian. Finally, in \S \ref{s:further} we discuss some additional results.

\subsection{Notation}

We list some of the important notation:
\begin{description}[align=right,labelwidth=2cm,leftmargin=!]
\item [$\cA^{\fin}$] the category of finite length objects in the abelian category $\cA$
\item [$\cA^{\lf}$] the category of locally finite length objects in the abelian category $\cA$
\item [$\bV$] the complex vector space with basis $\{e_i,f_i\}_{i \ge 1}$
\item [$\GL$] the group of automorphisms of $\bV$ fixing all but finitely many basis vectors
\item [$\bS_{\lambda}$] the Schur functor associated to the partition $\lambda$
\item [$\bV_{\lambda}$] the space $\bS_{\lambda}(\bV)$, considered as a representation of $\GL$ or any subgroup
\item [$\omega$] the symplectic form on $\bV$ with $\omega(e_i, f_i)=1$
\item [$\xi$] the linear functional on $\bV$ defined by $\xi(e_i)=\xi(f_i)=1$ for all $i$
\item [$\Sp$] the subgroup of $\GL$ fixing $\omega$
\item [$H$] the subgroup of $\Sp$ fixing $\xi$
\item [$A$] the polynomial ring $\Sym(\bV)$, regarded with its $\Sp$-action
\item [$B$] the tca $\Sym(\bV \oplus \lw^2{\bV})$
\end{description}

\section{Background} \label{s:bg}

\subsection{Polynomial representations of $\GL$}

Let $\bV$ be a complex vector space of countably infinite dimension. We let $\{e_i, f_i\}_{i \ge 1}$ be a basis for $\bV$. We let $\GL$ be the group of automorphisms of $\bV$ that fix all but finitely many basis vectors. The space $\bV$ is tautologically a representation of $\GL$. We say that a representation of $\GL$ is {\bf polynomial} if it can be realized as a subquotient of a (possibly infinite) direct sum of tensor powers of $\bV$. We let $\Rep^{\pol}(\GL)$ denote the category of polynomial representations of $\GL$. It is a semi-simple abelian category, and closed under tensor product. For a partition $\lambda$, we let $\bV_{\lambda}$ be the polynomial representation $\bS_{\lambda}(\bV)$, which is known to be irreducible. Every irreducible polynomial representation is isomorphic to $\bV_{\lambda}$ for a unique $\lambda$. For additional information on polynomial representations, see \cite{expos}.

\subsection{Twisted commutative algebras}

A {\bf twisted commutative algebra} (tca) is (for us) an algebra object in the tensor category $\Rep^{\pol}(\GL)$. Thus a tca is an ordinary commutative $\bC$-algebra $R$ equipped with an action of $\GL$ under which it forms a polynomial representation. By a {\bf module} over a tca $R$ we mean a module object in $\Rep^{\pol}(\GL)$. Thus a module is an ordinary $R$-module $M$ equipped with a compatible action of $\GL$ under which it forms a polynomial representation. When $R$ is a tca, ``$R$-module'' will by default be taken in the sense of tca's; we use the notation ``$\vert R \vert$-module'' to refer to non-equivariant modules, in the few cases there is such a need. An $R$-module is finitely generated if it contains finitely many elements whose $\GL$-orbits generate it as an $\vert R \vert$-module. If $V$ is a polynomial representation of $\GL$ then $V \otimes R$ is a projective $R$-module, and every $R$-module is a quotient of one of this form. An $R$-module is finitely generated if and only if it can be realized as a quotient of $V \otimes R$ for some finite length polynomial representation $V$. We say that a tca $R$ is {\bf noetherian} if every submodule of a finitely generated module is again finitely generated. For additional information on tca's, see \cite{expos}.

\subsection{Comparison of projectives and injectives} \label{ss:inj-proj}

The category $\Rep^{\pol}(\GL)$ has an internal $\Hom$, which we denote by $\uHom$. It can be defined explicitly by the formula
\begin{displaymath}
\uHom(V,W) = \bigoplus\nolimits \Hom(V \otimes \bV_{\lambda}, W) \otimes \bV_{\lambda},
\end{displaymath}
where the sum is over all partitions $\lambda$. There is a canonical isomorphism
\begin{displaymath}
\Hom(U, \uHom(V,W)) \cong \Hom(U \otimes V, W).
\end{displaymath}
Let $R$ be a tca with $R_0=\bC$ and let $V$ be a polynomial representation. Put $I_R(V)=\uHom(R, V)$. Then $I_R(V)$ is naturally an $R$-module. Moreover, for any $R$-module $M$, we have a natural isomorphism
\begin{displaymath}
\Hom_R(M, I_R(V)) \cong \Hom_{\GL}(M, V).
\end{displaymath}
We thus see that $I_R(V)$ is an injective $R$-module; in fact, it is easily seen to be the injective envelope of $V$, regarded as an $R$-module with $R_+$ acting by~0. Let $P_R(V)=R \otimes V$, which is a projective $R$-module and has a dual mapping property to $I_R(V)$. We will require the following relationship between $P_R(V)$ and $I_R(V)$:

\begin{proposition} \label{prop:inj-proj}
For polynomial representations $V$ and $W$, we have
\begin{displaymath}
\dim \Hom_R(P_R(V), P_R(W)) = \dim \Hom_R(I_R(V), I_R(W))
\end{displaymath}
\end{proposition}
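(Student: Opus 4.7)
The plan is to compute each side via the adjunctions that define $P_R$ and $I_R$, and then reduce to a direct multiplicity comparison in the semisimple category $\Rep^{\pol}(\GL)$. The tensor--forget adjunction for $P_R(V) = R \otimes V$ gives
\begin{displaymath}
\Hom_R(P_R(V), P_R(W)) \cong \Hom_{\GL}(V, R \otimes W),
\end{displaymath}
while the adjunction stated in the text for $I_R$ gives
\begin{displaymath}
\Hom_R(I_R(V), I_R(W)) \cong \Hom_{\GL}(I_R(V), W) = \Hom_{\GL}(\uHom(R, V), W).
\end{displaymath}
Both sides are additive in $V$ and $W$, so it suffices to treat the case of irreducible $V = \bV_\mu$ and $W = \bV_\nu$.

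In this case the left-hand side is $\dim \Hom_{\GL}(\bV_\mu, R \otimes \bV_\nu)$, which by semisimplicity of $\Rep^{\pol}(\GL)$ equals the multiplicity $[R \otimes \bV_\nu : \bV_\mu]$ of $\bV_\mu$ as a simple summand in $R \otimes \bV_\nu$. For the right-hand side, I substitute the defining formula
\begin{displaymath}
\uHom(R, \bV_\mu) = \bigoplus_\lambda \Hom_{\GL}(R \otimes \bV_\lambda, \bV_\mu) \otimes \bV_\lambda
\end{displaymath}
and then apply $\Hom_{\GL}(-, \bV_\nu)$; by Schur's lemma only the summand with $\lambda = \nu$ survives, leaving a space of dimension $\dim \Hom_{\GL}(R \otimes \bV_\nu, \bV_\mu)$, which again by semisimplicity equals $[R \otimes \bV_\nu : \bV_\mu]$. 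So both sides equal the same multiplicity.

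Honestly, there is no substantive obstacle: the identity is a bookkeeping consequence of the two adjunctions plus semisimplicity. The only mild care required is that $\uHom(R, V)$ is typically not of finite length, so one must compute $\Hom_{\GL}(\uHom(R, V), W)$ summand-by-summand via the explicit direct-sum description rather than invoking a generic finite-length argument; this is harmless since mapping into the irreducible $\bV_\nu$ isolates a single $\lambda$.
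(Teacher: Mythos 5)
Your proof is correct. It differs in mechanism from the paper's, which never touches the explicit direct-sum formula for $\uHom$ and never reduces to irreducibles: instead the paper chains the universal property of $I_R(W)$, the semisimplicity identity $\dim\Hom_{\GL}(X,Y)=\dim\Hom_{\GL}(Y,X)$, the adjunction $\Hom(U,\uHom(V,W))\cong\Hom(U\otimes V,W)$, and the universal property of $P_R(V)$, in one four-line computation. You instead reduce to simples $V=\bV_\mu$, $W=\bV_\nu$ and substitute the explicit formula $\uHom(R,\bV_\mu)=\bigoplus_\lambda\Hom_{\GL}(R\otimes\bV_\lambda,\bV_\mu)\otimes\bV_\lambda$, isolating $\lambda=\nu$ by Schur's lemma; both sides become the multiplicity $[R\otimes\bV_\nu:\bV_\mu]$. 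The paper's route is more uniform and avoids the (harmless but worth noting) cardinality bookkeeping one incurs when $V$ or $W$ is an infinite sum of irreducibles, or when the multiplicity spaces $\Hom_{\GL}(R\otimes\bV_\lambda,\bV_\mu)$ are infinite-dimensional; your route is more concrete and makes the common value of both sides visible as a single multiplicity. Both are valid.
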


\begin{proof}
Since $\Rep^{\pol}(\GL)$ is semi-simple, we have $\dim \Hom_{\GL}(V, W)=\dim \Hom_{\GL}(W, V)$ for any $V,W \in \Rep^{\pol}(\GL)$. We use this repeatedly in the following derivation:
\begin{align*}
\dim \Hom_R(I_R(V), I_R(W))
&= \dim \Hom_{\GL}(I_R(V), W) \\
&= \dim \Hom_{\GL}(W, \uHom(R, V)) \\
&= \dim \Hom_{\GL}(V, R \otimes W) \\
&= \dim \Hom_R(P_R(V), P_R(W)).
\end{align*}
In the first step, we used the mapping property for $I_R(W)$; in the second, the definition of $I_R(V)$; in the third, the adjunction for $\uHom$; in the final step, the mapping property for $P_R(V)$.
\end{proof}

\subsection{Algebraic representations of $\Sp$} \label{ss:repSp}

Let $\omega \colon \bV \times \bV \to \bC$ be the alternating bilinear form given by
\begin{displaymath}
\omega(e_i, f_j)=\delta_{i,j}, \qquad \omega(e_i, e_j)=0, \qquad \omega(f_i, f_j)=0,
\end{displaymath}
for $i,j \ge 1$. We let $\Sp \subset \GL$ be the subgroup preserving $\omega$. It is (one version of) the infinite symplectic group.

Since $\Sp$ is a subgroup of $\GL$, any representation of $\GL$ can be restricted to one of $\Sp$. We say that a representation of $\Sp$ is {\bf algebraic} if it occurs as a subquotient of a restriction of a polynomial representation. We let $\Rep(\Sp)$ denote the category of algebraic representations. This category was studied in detail in \cite{infrank} (see also \cite{koszulcategory, olshanskii, penkovserganova, penkovstyrkas}). We now recall the salient features of this category:
\begin{enumerate}
\item \label{sp:ss} Algebraic representations of $\Sp$ are \emph{not} semi-simple in general: for example, the map $\omega \colon \lw^2{\bV} \to \bC$ is a non-split surjection.
\item \label{sp:finlen} Every object of $\Rep(\Sp)$ is locally of finite length, i.e., the union of its finite length subobjects. Moreover, $\bV^{\otimes n}$ has finite length for all $n$.
\item \label{sp:inj} If $V$ is a polynomial representation of $\GL$ then its restriction to $\Sp$ is injective in $\Rep(\Sp)$, and every injective of $\Rep(\Sp)$ is obtained in this manner \cite[4.2.9]{infrank}. Moreover, every finite length object of $\Rep(\Sp)$ has finite injective dimension \cite[4.3.5]{infrank}.
\item \label{sp:poly-quo} Every object of $\Rep(\Sp)$ is a quotient of the restriction of some polynomial representation of $\GL$. This result does not appear in \cite{infrank}, but follows easily from \cite[Theorem~3.1]{sym2noeth}, as we now explain. We momentarily let $A$ denote the tca $\Sym(\lw^2{\bV})$ as in loc.\ cit. The form $\omega$ induces an $\Sp$-equivariant algebra homomorphism $\phi \colon A \to \bC$. The cited theorem (or, more accurately, its $\lw^2$ variant; see \cite[\S 3.5]{sym2noeth}) implies that every algebraic representation of $\Sp$ has the form $M \otimes_{A,\phi} \bC$ for some $A$-module $M$. Writing $M$ as a quotient of $V \otimes A$ for some polynomial representation $V$, we find that $M \otimes_{A,\phi} \bC$ is a quotient of $V$, as required.
  
\item \label{sp:simples} The simple objects of $\Rep(\Sp)$ are indexed by partitions, and obtained by Weyl's construction. We now recall what this means. Suppose that $V$ is a vector space equipped with an alternating form $\lw^2{V} \to \bC$. For $1 \le i<j \le n$, we have a map $t_{i,j} \colon V^{\otimes n} \to V^{\otimes (n-2)}$ obtained by applying the form to the $i$th and $j$th tensor factors. We let $V^{[n]}$ be the intersection of the kernels of the $t_{i,j}$'s; this is called the space of {\bf traceless tensors}. The symmetric group $\fS_n$ acts on $V^{[n]}$. Letting $\cM_{\lambda}$ denote the Specht module associated to $\lambda$, we define $\bS_{[\lambda]}(V)$ to be the space $\Hom_{\fS_n}(\cM_{\lambda}, V^{[n]})$, which is a representation of $\Sp(V)$. We can in particular apply this construction to our space $\bV$ with the form $\omega$. The resulting objects $\bS_{[\lambda]}(\bV)$ are simple, mutually non-isomorphic, and exhaust the simple objects of $\Rep(\Sp)$ \cite[4.1.4]{infrank}. Furthermore, every other simple object $\bS_{[\mu]}(\bV)$ appearing in $\bV_\lambda$ satisfies $|\mu|<|\lambda|$ \cite[Proposition 4.1.9]{infrank}.
  
\item \label{sp:len} For a partition $\lambda$, let $\ell(\lambda)$ denote the number of rows in $\lambda$. For a representation $V$ of $\Sp$, let $\ell(V)$ denote the supremum of $\ell(\lambda)$ taken over partitions $\lambda$ for which $\bS_{[\lambda]}{\bV}$ occurs as a constituent of $V$. Then $\ell(V \otimes W) \le \ell(V) + \ell(W)$. This follows from \cite[7.5, Theorem 4.3.4]{infrank}.

\item \label{sp:tca} The category $\Rep(\Sp)$ is equivalent to the category of locally finite length modules over the tca $R=\Sym(\lw^2{\bV})$ \cite[4.3.2]{infrank}. Under this equivalence, the irreducible representation $\bS_{[\lambda]}{\bV}$ of $\Rep(\Sp)$ corresponds to the simple $R$-module $\bV_{\lambda}$ (with $R_+$ acting by~0), and the injective representation $\bV_{\lambda}$ corresponds to the injective $R$-module $I_R(\bV_{\lambda})$ defined in \S \ref{ss:inj-proj}. Note that this implies that $\bV_\lambda$ is the injective envelope of $\bS_{[\lambda]} \bV$.

\item \label{sp:sp} Let $\bV_{\le n}$ be the span of the vectors $e_i$ with $\vert i \vert \le n$, and let $\bV_{\ge n}$ be defined analogously. Let $G_n$ be the symplectic group on $\bV_{\ge n}$; we write $\Sp_{2n}$ for the symplectic group on $\bV_{\le n}$. Then $\Sp_{2n} \times G_n$ is naturally a subgroup of $\Sp$. Given an algebraic representation $V$ of $\Sp$, it follows that $\Gamma_n(V)=V^{G_n}$ is a representation of $\Sp_{2n}$, and this defines a functor
\begin{displaymath}
\Gamma_n \colon \Rep(\Sp) \to \Rep(\Sp_{2n})
\end{displaymath}
called the {\bf specialization functor}. It is obviously left-exact. We show \cite[\S 4.4]{infrank} that it is a strict tensor functor, i.e., the natural map $\Gamma_n(V) \otimes \Gamma_n(W) \to \Gamma_n(V \otimes W)$ is an isomorphism. For any finite length representation $V$ of $\Sp$, $\Gamma_n(V)$ is finite length \cite[4.4.4]{infrank}. Furthermore, $\rR^i \Gamma_n(V)=0$ for $n$ fixed and $i$ large, or for $i>0$ and $n$ large (in fact, $n \ge \ell(V)$ suffices); this follows from \cite[4.4.6]{infrank} and the computations in \cite{littlewood}. 
\end{enumerate}

\subsection{The infinite symmetric group} \label{ss:repSym}

Let $\fS=\bigcup_{n \ge 1} \fS_n$ be the infinite symmetric group. Let $\bW \subset \bV$ be the span of the $e_i$'s with $i \ge 1$. Then $W$ is naturally a representation of $\fS$ via $\sigma e_i = e_{\sigma(i)}$. We say that a representation of $\fS$ is {\bf algebraic} if it occurs as a subquotient of a direct sum of tensor powers of $\bW$. We let $\Rep(\fS)$ denote the category of algebraic representations. This category was studied in detail in \cite[\S 6]{infrank}. We will need to use a few properties of it:
\begin{enumerate}
\item \label{sym:fin-len} Every object of $\Rep(\fS)$ is locally finite length, and $\bW^{\otimes n}$ has finite length for all $n$.
\item \label{sym:sp-res} Define a representation of $\fS$ on $\bV$ by $\sigma(e_i)=e_{\sigma(i)}$ and $\sigma(f_i)=f_{\sigma(i)}$. Then $\bV \cong \bW^{\oplus 2}$. Since $\fS$ preserves the symplectic form $\omega$ on $\bV$, it follows that we get an embedding $\fS \to \Sp$. We can therefore restrict representations of $\Sp$ to $\fS$. It is clear that algebraic representations restrict to algebraic representations, so we have a functor $\Rep(\Sp) \to \Rep(\fS)$. Note that if $V$ is a finite length algebraic representation of $\Sp$ then its restriction to $\fS$ is also of finite length. Indeed, it suffices to verify this for $V=\bV^{\otimes n}$, and the restriction of this is $(\bW^{\oplus 2})^{\otimes n}$, which has finite length.
\item \label{sym:large-n} Let $\fS_{>n}$ be the subgroup fixing each of $1, \ldots, n$. Note that $\fS_n \times \fS_{>n}$ is naturally a subgroup of $\fS$. If $V$ is a finite length algebraic representation of $\fS$ then $V^{\fS_{>n}}$ generates $V^{\fS_{>n+1}}$ as an $\fS_{n+1}$-module for $n \gg 0$. This follows from \cite[6.2.3]{infrank}, which shows that the sequence $V^{\fS_{>n}}$ can be given the structure of a finitely generated $\FI$-module where $\FI$ is the category of finite sets and injective functions.
\item \label{sym:noeth} Let $R_n=\Sym(\bW^{\oplus n})$, regarded as an algebra object in $\Rep(\fS)$. Then $R_n$ is noetherian, that is, if $M$ is a finitely generated $R_n$-module in $\Rep(\fS)$ then any submodule of $M$ is also finitely generated. This was proved for ideals of $R_n$ by Cohen \cite{cohen, cohen2} (see also \cite{AschenbrennerHillar, HillarSullivant}). The proof can easily be adapted to handle the module case; alternatively, one can appeal to \cite[ Theorem 4.6, Corollary 6.16]{nagel-romer}.
\end{enumerate}

\subsection{The algebra $A$}

Let $A$ be the algebra object $\Sym(\bV)$ in the category $\Rep(\Sp)$. We identify $A$ with the polynomial ring in variables $\{x_i, y_i\}_{i \ge 1}$, with $x_i$ corresponding to $e_i$ and $y_i$ to $f_i$. By an {\bf $A$-module}, we will always mean a module object in $\Rep(\Sp)$. We let $\Mod_A$ denote the category of $A$-modules. As with tca's, we use the term ``$\vert A \vert$-module'' to refer to non-equivariant modules, when needed. We now establish some basic properties of $A$ and its modules.

\begin{proposition} \label{prop:Aquo}
Let $M$ be an $A$-module. Then $M$ is a quotient of an $A$-module of the form $A \otimes V$ where $V$ is a polynomial representation of $\GL$. If $M$ is finitely generated, we can take $V$ to be finite length.
\end{proposition}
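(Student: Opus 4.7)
The plan is to derive both parts from item~(d) of \S\ref{ss:repSp} combined with the free--forgetful adjunction between $\Mod_A$ and $\Rep(\Sp)$. For the first statement, I view $M$ purely as an object of $\Rep(\Sp)$. Item~(d) then produces a polynomial representation $V$ of $\GL$ together with a surjection $\pi \colon V \twoheadrightarrow M$ in $\Rep(\Sp)$. The adjunction promotes $\pi$ to an $A$-module map $\tilde\pi \colon A \otimes V \to M$, which is surjective because its restriction to the summand $1 \otimes V \subset A \otimes V$ recovers $\pi$, and $\pi$ is already surjective.

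For the second statement, I extract a finite length polynomial subrepresentation $V' \subset V$ that still generates $M$ as an $A$-module. Fixing generators $m_1, \ldots, m_k$ of $M$ over $\vert A \vert$, I pick preimages $w_i \in V$ with $\pi(w_i) = m_i$. Since $\Rep^{\pol}(\GL)$ is semi-simple, the finite-dimensional subspace spanned by the $w_i$ lies in a finite length subrepresentation $V' \subset V$ (taking the sum of the finitely many isotypic summands of $V$ that meet this subspace suffices). The restricted adjoint $A \otimes V' \to M$ then has image containing each $m_i$, hence equals $M$.

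I do not foresee any serious obstacle: the one nontrivial input is item~(d) itself, whose justification in \S\ref{ss:repSp} already appeals to \cite[Theorem~3.1]{sym2noeth}, and the remaining manipulations --- checking that the adjoint map is surjective, and exploiting semi-simplicity of polynomial $\GL$-representations to shrink $V$ --- are routine.
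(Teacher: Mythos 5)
Your first paragraph reproduces the paper's argument verbatim: apply \S\ref{ss:repSp}(\ref{sp:poly-quo}) to get a surjection $V \to M$ in $\Rep(\Sp)$ and then use the free--forgetful adjunction. For the finitely generated case your route differs from the paper's: the paper picks a finite length $\Sp$-subrepresentation $W \subset M$ that generates $M$ and applies \S\ref{ss:repSp}(\ref{sp:poly-quo}) to $W$ directly, getting a finite length $V$ in one step, whereas you reuse the possibly infinite $V$ from the first part and shrink it afterwards. Both routes work, but two phrases in yours should be tightened. First, ``generators $m_1,\ldots,m_k$ of $M$ over $\vert A\vert$'' is not the right notion here: in the equivariant setting, finitely generated means finitely many elements whose $\Sp$-orbits generate over $\vert A\vert$, and $M$ will generally not be finitely generated as a plain $\vert A\vert$-module. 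Your conclusion still holds because the image of the $\Sp$-equivariant map $A\otimes V'\to M$ is $\Sp$-stable, so containing each $m_i$ forces it to contain their whole $\Sp$-orbits and hence all of $M$, but that observation needs to be made explicit. Second, the sum of the isotypic components of $V$ meeting a finite-dimensional subspace need not have finite length, since an isotypic component can have infinite multiplicity; what you want is the $\GL$-subrepresentation of $V$ generated by $w_1,\ldots,w_k$ (which is finite length since $\Rep^{\pol}(\GL)$ is locally of finite length), or equivalently the finitely many irreducible summands, in a fixed decomposition of $V$, that the $w_i$ actually touch.
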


\begin{proof}
Since $M$ is an algebraic representation of $\Sp$, we can find a surjection $V \to M$ with $V$ a polynomial representation of $\GL$ (see \S \ref{ss:repSp}(\ref{sp:poly-quo})). We thus obtain a surjection of $A$-modules $A \otimes V \to M$. Now suppose $M$ is finitely generated, and let $W \subset M$ be a finite length $\Sp$-subrepresentation that generates it. Choose a surjection $V \to W$ with $V$ a finite length polynomial representation of $\GL$. Then the resulting map $A \otimes V \to M$ is surjective.
\end{proof}

The category $\Mod_A$ has no non-zero projective objects. In particular, the modules $A \otimes V$ appearing in the above proposition are not projective. (This is a consequence of the fact that $V$ is not a projective object of $\Rep(\Sp)$, hence any non-split surjection $W \to V$ cannot be lifted. Concretely, if $V=\bC$, then $\bigwedge^2 \bV \to \bC$ has no splitting.) However, these modules are $A$-flat, and so the proposition implies that $\Mod_A$ has enough flat objects. Thus there is no problem defining the functor $\Tor^A_{\bullet}(-, -)$ on $\Mod_A$.

\begin{proposition} \label{prop:forget-injective}
The forgetful functor $\Mod_A \to \Rep(\Sp)$ takes injective objects to injective objects.
\end{proposition}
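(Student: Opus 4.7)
The plan is to exhibit the forgetful functor as the right adjoint of an exact functor, then invoke the standard fact that right adjoints of exact functors preserve injective objects.

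First I would observe that the extension-of-scalars functor $F \colon \Rep(\Sp) \to \Mod_A$, $V \mapsto A \otimes V$, is left adjoint to the forgetful functor $G \colon \Mod_A \to \Rep(\Sp)$. The adjunction
\begin{displaymath}
\Hom_A(A \otimes V, M) \cong \Hom_{\Sp}(V, M)
\end{displaymath}
is the usual one: a map on the right determines a map on the left by $a \otimes v \mapsto a \cdot \varphi(v)$, and conversely a map on the left restricts along $v \mapsto 1 \otimes v$. One should check that $F$ actually lands in $\Mod_A$, which amounts to noting that $A = \Sym(\bV) = \bigoplus_n \Sym^n(\bV)$ is algebraic as an $\Sp$-representation (by \S \ref{ss:repSp}(\ref{sp:finlen}) and the fact that algebraic representations are closed under arbitrary direct sums), so $A \otimes V$ is algebraic whenever $V$ is.

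Next, I would verify that $F$ is exact. Since $A$ is free (hence flat) as a $\bC$-module, tensoring with $A$ over $\bC$ preserves short exact sequences of $\bC$-vector spaces; as this underlies the construction of $F$, the functor $F$ is exact on $\Rep(\Sp)$.

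Finally, given these two facts, the general principle applies: if $I \in \Mod_A$ is injective and $V \hookrightarrow W$ is a monomorphism in $\Rep(\Sp)$, then $A \otimes V \hookrightarrow A \otimes W$ is a monomorphism in $\Mod_A$, so $\Hom_A(A \otimes W, I) \to \Hom_A(A \otimes V, I)$ is surjective, and by the adjunction $\Hom_{\Sp}(W, G(I)) \to \Hom_{\Sp}(V, G(I))$ is surjective. Thus $G(I)$ is injective in $\Rep(\Sp)$. There is no real obstacle here; the only point that requires attention is the verification that $F$ preserves algebraicity, but this is immediate.
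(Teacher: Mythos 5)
Your proof is correct and is exactly the paper's argument: the paper's one-line proof observes that the forgetful functor is right adjoint to the exact functor $V \mapsto A \otimes V$, and you have simply spelled out the adjunction, the exactness, and the standard consequence. No issues.
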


\begin{proof}
It is right adjoint to the exact functor $\Rep(\Sp) \to \Mod_A$ given by $V \mapsto A \otimes V$.
\end{proof}

\begin{proposition} \label{prop:Anoeth}
$A$ is noetherian, that is, any submodule of a finitely generated module is finitely generated.
\end{proposition}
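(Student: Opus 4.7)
The plan is to bootstrap from the noetherianity of $R_2=\Sym(\bW^{\oplus 2})$ in $\Rep(\fS)$ recorded in \S\ref{ss:repSym}(\ref{sym:noeth}). The key observation is that under the embedding $\fS\subset\Sp$ from \S\ref{ss:repSym}(\ref{sym:sp-res}), we have $\bV|_\fS\cong\bW^{\oplus 2}$, so $A$ becomes $R_2$ as a ring with $\fS$-action. Restriction along $\fS\subset\Sp$ therefore sends $A$-modules in $\Rep(\Sp)$ to $R_2$-modules in $\Rep(\fS)$, and the strategy is to transport the noetherian property back across this restriction.

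First I verify that the restriction preserves finite generation: if $M$ is finitely generated as an $A$-module in $\Rep(\Sp)$, Proposition~\ref{prop:Aquo} gives a surjection $A\otimes V\twoheadrightarrow M$ with $V$ a finite-length polynomial $\GL$-representation. Each Schur functor $\bV_\lambda$ is of finite length as an $\Sp$-rep by \S\ref{ss:repSp}(\ref{sp:finlen}) and hence of finite length as an $\fS$-rep by \S\ref{ss:repSym}(\ref{sym:sp-res}), so $V|_\fS$ is finite length, exhibiting $M$ as finitely generated over $R_2$ in $\Rep(\fS)$. Now given an $A$-submodule $N\subseteq M$, the noetherianity of $R_2$ applied to the ambient $R_2$-module $M|_\fS$ produces a finite-length $\fS$-subrepresentation $W\subseteq N$ with $N=R_2\cdot W$.

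It remains to promote this to finite generation over $A$ in $\Rep(\Sp)$. Since simple objects of $\Rep(\fS)$ are cyclic, induction on composition length shows that any finite-length $\fS$-representation is generated as an $\fS$-rep by finitely many elements; pick such generators $w_1,\ldots,w_k\in W$. Each $w_i$ lies in the algebraic representation $M$, which is locally of finite length by \S\ref{ss:repSp}(\ref{sp:finlen}), so the $\Sp$-subrepresentation $V_i\subseteq M$ generated by $w_i$ is of finite length; being contained in the $\Sp$-stable submodule $N$, it lies in $N$. The sum $V:=V_1+\cdots+V_k$ is then a finite-length $\Sp$-subrep of $N$ containing $W$ (since $\fS\subseteq\Sp$), so $N=R_2\cdot W\subseteq A\cdot V\subseteq N$, and hence $N=A\cdot V$ is finitely generated as an $A$-module in $\Rep(\Sp)$.

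The main obstacle is this last promotion step: one must convert an $\fS$-generating subset of $N$ of finite $\fS$-length into an $\Sp$-generating subset of finite $\Sp$-length. This is made possible by the local finite length of $M$ as an $\Sp$-rep (which bounds the $\Sp$-saturations $\Sp\cdot w_i$) together with the cyclicity of simple $\fS$-reps (which produces a finite list of $\fS$-generators to saturate).
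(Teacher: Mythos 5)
Your argument is correct, and it rests on the same key idea as the paper's proof: restrict along $\fS \subset \Sp$ to turn the problem into one about $R_2$-modules in $\Rep(\fS)$, then invoke the noetherianity of $R_2$. The difference is in how each concludes. The paper uses the ascending chain condition: a chain $N_1 \subset N_2 \subset \cdots$ of $A$-submodules of $M$ restricts to a chain of $R_2$-submodules, which stabilizes by noetherianity; since restriction along $\fS \subset \Sp$ does not change the underlying vector space, the original chain stabilizes. This entirely sidesteps the need to convert an $\fS$-generating set into an $\Sp$-generating set. Your proof goes the direct route: produce a finite $\fS$-generating set for $N$, then promote it to a finite $\Sp$-generating set using the fact that every object of $\Rep(\Sp)$ is locally finite length, so each generator's $\Sp$-saturation is finite length. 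Both are valid; the paper's is slightly slicker because the ACC formulation dissolves the promotion step.

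One small simplification you could make: by the paper's own definition, an $R_2$-module in $\Rep(\fS)$ is finitely generated precisely when it contains finitely many elements whose $\fS$-orbits generate it as an $\vert R_2 \vert$-module. You can therefore take the generators $w_1,\ldots,w_k$ directly from this definition, rather than passing through the finite-length subrepresentation $W$ and then arguing that finite-length objects of $\Rep(\fS)$ admit finitely many $\fS$-generators. The promotion step (each $w_i$ generates a finite-length $\Sp$-subrepresentation $V_i \subseteq N$, and $N = A \cdot (V_1 + \cdots + V_k)$) then goes through unchanged.
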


\begin{proof}
Let $\Phi \colon \Rep(\Sp) \to \Rep(\fS)$ be the restriction functor. Then $\Phi(A)$ is isomorphic to the algebra $R_2$ of \S \ref{ss:repSym}(\ref{sym:noeth}). If $M$ is a finitely generated $A$-module then $\Phi(M)$ is a finitely generated $R_2$-module: indeed, $M$ is a quotient of $A \otimes V$ for some finite length $\Sp$-representation $V$, and thus $\Phi(M)$ is a quotient of $R_2 \otimes \Phi(V)$, and $\Phi(V)$ is finite length as an $\fS$-representation by \S \ref{ss:repSym}(\ref{sym:sp-res}). Suppose now that $N_1 \subset N_2 \subset \cdots$ is an ascending chain of $A$-submodules of $M$. Then $\Phi(N_1) \subset \Phi(N_2) \subset \cdots$ is an ascending chain of $R_2$-submodules of $\Phi(M)$. Since $\Phi(M)$ is finitely generated over $R_2$ and $R_2$ is noetherian, the chain stabilizes. Thus the original chain stabilizes, as $\Phi$ does not affect the underlying vector space. This shows that $M$ is noetherian as an $A$-module.
\end{proof}

We say that an $A$-module $M$ is {\bf torsion} if every element is annihilated by a non-zero element of $A$. We now show that this notion is equivalent to two other reasonable notions of torsion.

\begin{proposition}
Let $M$ be a finitely generated $A$-module. Then the following conditions are equivalent:
\begin{enumerate}
\item $M$ is torsion in the above sense.
\item $M$ has finite length.
\item $M$ is annihilated by a power of $A_+$.
\end{enumerate}
\end{proposition}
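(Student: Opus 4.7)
The plan is to prove the cycle (c)~$\Rightarrow$~(b)~$\Rightarrow$~(a)~$\Rightarrow$~(c).

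For (c)~$\Rightarrow$~(b): Proposition~\ref{prop:Aquo} realizes $M$ as a quotient of $A\otimes V$ for some finite length polynomial representation $V$ of $\GL$. Since $A_+^n M = 0$, this surjection factors through $(A/A_+^n)\otimes V$. Now $A/A_+^n = \bigoplus_{k<n}\Sym^k\bV$ is a finite direct sum of the $\Sym^k\bV$, each a $\GL$-subquotient of $\bV^{\otimes k}$ and thus of finite length in $\Rep(\Sp)$ by \S\ref{ss:repSp}(\ref{sp:finlen}). Hence $(A/A_+^n)\otimes V$ has finite length, and so does its quotient $M$.

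For (b)~$\Rightarrow$~(a): Every algebraic $\Sp$-representation is the ascending union $M = \bigcup_n M^{G_n}$. If $M$ has finite length, then each $\Gamma_n(M) = M^{G_n}$ has finite length as an $\Sp_{2n}$-representation by \S\ref{ss:repSp}(\ref{sp:sp}), hence is finite-dimensional over $\bC$. As a finite-dimensional module over the classical polynomial ring $A_n = \Sym(\bV_{\le n}) = \Gamma_n(A)$, it is annihilated by some power of $\mathfrak{m}_n = (A_n)_+$, supplying a nonzero $A$-annihilator for any $v \in M^{G_n}$.

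The main step is (a)~$\Rightarrow$~(c). I would first classify the nonzero $\Sp$-invariant ideals of $A$: they are precisely $A$ and the powers $A_+^k$ for $k\ge 1$. Applying $\Gamma_n$ to an $\Sp$-invariant ideal $I\subseteq A$ yields an $\Sp_{2n}$-invariant ideal of $A_n$; since $\Sp_{2n}$ has only two orbits on $\Spec A_n = \bV_{\le n}^*$, namely $\{0\}$ and its complement, these are just $\{0\}$, $\mathfrak{m}_n^k$, and $A_n$. A direct computation, using $(\Sym^{\ge 1}\bV_{>n})^{G_n} = 0$, gives $\Gamma_n(A_+^k) = \mathfrak{m}_n^k$, and since the $\Gamma_n$ collectively determine $I$, the classification follows.

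It then remains to show $\Ann_A(M) \ne 0$; by the classification this immediately gives (c), since the annihilator is $\Sp$-invariant (as $M$ is $\Sp$-equivariant). I expect this last step to be the principal obstacle. The plan is to pass to the restriction $\Phi\colon \Rep(\Sp) \to \Rep(\fS)$, under which $\Phi(M)$ is finitely generated and torsion over $\Phi(A) = R_2 = \Sym(\bW^{\oplus 2})$, which is noetherian in $\Rep(\fS)$ by \S\ref{ss:repSym}(\ref{sym:noeth}) (as in the proof of Proposition~\ref{prop:Anoeth}). Using the stabilization in \S\ref{ss:repSym}(\ref{sym:large-n}) and the fact that $|A|$ is an integral domain, for $n \gg 0$ one obtains a nonzero element of $|A|$ annihilating the finite-dimensional subspace $(\Phi W)^{\fS_{>n}}$ of a generating $\fS$-subrepresentation $W$; a short further argument leveraging the $\Sp$-equivariance (and the classification above, applied to the $\Sp$-invariant ideal generated by the $\Sp$-orbit of this annihilator) then promotes this to a nonzero element of $\Ann_A(M)$.
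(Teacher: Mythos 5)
The directions (c)~$\Rightarrow$~(b)~$\Rightarrow$~(a) are essentially fine, though in (b)~$\Rightarrow$~(a) the claim that the finite-dimensional module $M^{G_n}$ is annihilated by a power of $\mathfrak{m}_n$ is not automatic: a finite-dimensional $A_n$-module can be supported away from the origin. You need to invoke the $\Sp_{2n}$-equivariance to see that the (finite) support is an $\Sp_{2n}$-stable subset of $\bV_{\le n}^*$ and hence equal to $\{0\}$; you should state this. Similarly, the two-orbit fact does not by itself classify the $\Sp_{2n}$-invariant ideals of $A_n$ -- it only pins down the invariant radical ideals -- and one still needs to know that every invariant ideal is graded, which in turn rests on the irreducibility of $\Sym^k \bV_{\le n}$ (the paper's route goes directly through this irreducibility in the infinite setting).

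The serious gap is in (a)~$\Rightarrow$~(c), exactly the step you flagged as the ``short further argument.'' Having produced $a \ne 0$ annihilating the finite-dimensional space $W^{\fS_{>n}}$, equivariance gives only that $ga$ annihilates $g\,W^{\fS_{>n}}$ for each $g \in \Sp$: each translate of $a$ annihilates a different finite-dimensional subspace, so the $\Sp$-invariant ideal generated by the orbit of $a$ does not annihilate $M$, and nothing in the classification of invariant ideals helps until you already have a \emph{single} nonzero element killing all of a generating $\Sp$-subrepresentation. The paper's proof supplies precisely this missing ingredient via a Lie-algebra differentiation trick: from $ax = 0$ one deduces $a(Ex) + (Ea)x = 0$ for $E \in \fsp$, hence $a^2(Ex) = 0$, and iterating shows that for every $u \in \cU(\fsp)$ there is a power $a^k$ with $a^k(ux) = 0$; combined with choosing $a$ to be fixed by $G_n$ and observing that the $\Sp$-subrepresentation generated by $x$ is finitely generated over $G_n$, one gets a \emph{uniform} power $a^k$ annihilating the whole $\Sp$-subrepresentation generated by $x$. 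Only at that point is the $\Sp$-stable ideal generated by the orbit of $a^k$ known to kill $V$, and the classification of invariant ideals closes the argument. Your plan has no analogue of this spreading mechanism, and I do not see how to complete it without rediscovering something like it.
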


\begin{proof}
It is clear that (b) and (c) are equivalent, and that both imply (a). Suppose now that $M$ satisfies (a). Let $x \in M$ be given and let $a \in A$ be a non-zero element such that $ax=0$. Let $V$ be the $\Sp$-subrepresentation of $M$ generated by $x$. Suppose that $E$ is an element of the Lie algebra $\fsp$. We then have $a(Ex)+(Ea)x=0$. Multiplying by $a$, we find that $a^2 (Ex)=0$. Continuing in this manner, we see that if $b$ is any element of $\cU(\fsp)$ then there is some $k$ such that $a^k (bx)=0$. It follows that every element of $V$ is annihilated by some power of $a$.

Let $n$ be such that $G_n$ fixes $a$, where $G_n$ is as in \S \ref{ss:repSp}(\ref{sp:sp}). (We note that any element of an algebraic representation of $\Sp$ is fixed by some $G_n$.) One easily sees that $V$ is finitely generated as a $G_n$-representation. Let $y_1, \ldots, y_r$ be generators, and let $k$ be such that $a^k y_i=0$ for all $1 \le i \le r$; this exists by the first paragraph. Since $G_n$ fixes $a$, it follows that $a^k y=0$ for any element $y \in V$. Since $V$ is $\Sp$-stable, it follows that $(ga^k) y=0$ for any $y \in V$ and any $g \in \Sp$. Thus if $I$ is the ideal of $A$ generated by the $\Sp$-orbit of $a^k$ then $IV=0$. Since each graded piece of $A$ is irreducible as an $\Sp$-representation (the symmetric power $\Sym^k \bV$ is equal to $\bS_{[k]} \bV$ since the invariants of $\bV^{\otimes k}$ are automatically traceless), the only $\Sp$-stable ideals of $A$ are powers of $A_+$ and the zero ideal. Since $I$ is not zero, we see that $A_+^n V=0$ for some $n$. Thus every element of $M$ is annihilated by a power of $A_+$. Since $M$ is finitely generated, it follows that $M$ is annihilated by a power of $A_+$. Thus (c) holds.
\end{proof}

\begin{proposition} \label{prop:Aspec}
Let $M$ be a finitely generated $A$-module. Then $\ell(M)$ is finite and $\rR^i \Gamma_n(M)=0$ for all $n \ge \ell(M)$ and $i>0$.
\end{proposition}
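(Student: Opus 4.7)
The statement has two parts: $\ell(M)<\infty$, and $\rR^i\Gamma_n(M)=0$ for $i>0$ and $n\ge\ell(M)$. The plan is to reduce both to the finite-length results already collected in \S\ref{ss:repSp}.

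For the first part, I would apply Proposition~\ref{prop:Aquo} to obtain a surjection $A\otimes V_0\twoheadrightarrow M$ with $V_0$ a finite length polynomial $\GL$-representation. By the computation used in the previous proof, $\Sym^k\bV=\bS_{[k]}\bV$, so each graded piece of $A$ is a simple indexed by a one-row partition and $\ell(A)=1$. Then \S\ref{ss:repSp}(\ref{sp:len}) gives $\ell(A\otimes V_0)\le 1+\ell(V_0)$, and since every constituent of $M$ is a constituent of $A\otimes V_0$, I would conclude $\ell(M)\le 1+\ell(V_0)<\infty$.

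For the second part, I would use \S\ref{ss:repSp}(\ref{sp:finlen}) to write $M=\bigcup_\alpha M_\alpha$ as a filtered union of finite length $\Sp$-subobjects. Each satisfies $\ell(M_\alpha)\le\ell(M)$, so by \S\ref{ss:repSp}(\ref{sp:sp}) we would have $\rR^i\Gamma_n(M_\alpha)=0$ for $i>0$ and $n\ge\ell(M)$. The vanishing for $M$ would then follow once one knows $\rR^i\Gamma_n(M)=\varinjlim_\alpha\rR^i\Gamma_n(M_\alpha)$.

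The step I expect to require the most attention is justifying this exchange of $\rR^i\Gamma_n$ with filtered colimits. The underived functor $\Gamma_n=(-)^{G_n}$ clearly commutes with filtered unions of subobjects: any $v\in V^{G_n}\cap V_\alpha$ must already be $G_n$-fixed in $V_\alpha$ since $V_\alpha\hookrightarrow V$ is injective and $G_n$-equivariant. Moreover, $\Rep(\Sp)$ is a locally noetherian Grothendieck category (locally finite length by \S\ref{ss:repSp}(\ref{sp:finlen}), and finite length implies noetherian), so arbitrary direct sums, and hence filtered colimits, of injectives remain injective. Standard homological algebra should then allow one to choose compatible injective resolutions $M_\alpha\hookrightarrow I_\alpha^\bullet$ whose colimit is an injective resolution of $M$; applying $\Gamma_n$ and using that it commutes with filtered colimits would give the desired identification $\rR^i\Gamma_n(M)\cong\varinjlim_\alpha\rR^i\Gamma_n(M_\alpha)$, completing the proof.
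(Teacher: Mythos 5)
Your argument for the first part is exactly the paper's: write $M$ as a quotient of $A \otimes V$ with $V$ of finite length, observe $\ell(A) = 1$ because $\Sym^k \bV = \bS_{[k]}\bV$, and apply the subadditivity $\ell(V\otimes W) \le \ell(V)+\ell(W)$ from \S\ref{ss:repSp}(\ref{sp:len}).

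For the vanishing, the paper simply cites \S\ref{ss:repSp}(\ref{sp:sp}) for $M$ itself, treating the statement there (``$\rR^i\Gamma_n(V)=0$ for $i>0$, $n \ge \ell(V)$'') as applying to any $V$ with $\ell(V)$ finite. You instead reduce to the finite length case via a filtered colimit, which is a reasonable way to make the step explicit. One point deserves tightening, though: you verify that $\Gamma_n$ commutes with filtered \emph{unions of subobjects}, but the derived-functor argument needs $\Gamma_n$ to commute with \emph{general} filtered colimits, since the colimit of a compatible system of injective resolutions of the $M_\alpha$ will not have injective transition maps. The stronger commutation does hold --- an invariant vector $v \in V^{G_n}$ in a colimit $\varinjlim V_\alpha$ lifts to some $v_\alpha$, the span $\langle G_n v_\alpha\rangle$ is finite-dimensional because the representation is algebraic, and after pushing to a sufficiently large $\beta$ this span becomes a one-dimensional $G_n$-stable line, which must be fixed since $G_n$ is perfect --- but this is the part that should be written out, as it is not covered by the union argument you gave.
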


\begin{proof}
Write $M$ as a quotient of $A \otimes V$ for some finite length $\Sp$-representation $V$. Then $\ell(M) \le \ell(A \otimes V)$ by the definition and $\ell(A \otimes V) \le \ell(A) + \ell(V)$ by \S \ref{ss:repSp}(\ref{sp:len}). Since $\ell(A)=1$ (since $\Sym^k \bV = \bS_{[k]} \bV$) and $\ell(V)$ is finite, as $V$ has finite length, it follows that $\ell(M)$ is finite. The vanishing statement now follows from \S \ref{ss:repSp}(\ref{sp:sp}).
\end{proof}

\begin{corollary} \label{cor:sp-surj}
Let $M \to N$ be a surjection of finitely generated $A$-modules. Then $\Gamma_n(M) \to \Gamma_n(N)$ is surjective for $n \gg 0$.
\end{corollary}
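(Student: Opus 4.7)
The plan is to reduce surjectivity of $\Gamma_n(M) \to \Gamma_n(N)$ to the vanishing of the first derived functor on the kernel, and then invoke Proposition~\ref{prop:Aspec}.

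First, let $K$ denote the kernel of the surjection $M \to N$, so that we have a short exact sequence
\begin{displaymath}
0 \to K \to M \to N \to 0
\end{displaymath}
of $A$-modules. Since $A$ is noetherian (Proposition~\ref{prop:Anoeth}) and $M$ is finitely generated, $K$ is finitely generated as well.

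Next, apply the left-exact functor $\Gamma_n$ (from \S\ref{ss:repSp}(\ref{sp:sp})) to obtain the long exact sequence
\begin{displaymath}
0 \to \Gamma_n(K) \to \Gamma_n(M) \to \Gamma_n(N) \to \rR^1 \Gamma_n(K) \to \cdots,
\end{displaymath}
so surjectivity of $\Gamma_n(M) \to \Gamma_n(N)$ follows from the vanishing of $\rR^1 \Gamma_n(K)$. Now Proposition~\ref{prop:Aspec}, applied to the finitely generated module $K$, gives $\ell(K) < \infty$ and $\rR^1 \Gamma_n(K) = 0$ for all $n \ge \ell(K)$. Thus for any $n \ge \ell(K)$, the map $\Gamma_n(M) \to \Gamma_n(N)$ is surjective.

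There is essentially no obstacle: the corollary is an immediate consequence of noetherianity (to ensure the kernel is finitely generated) combined with Proposition~\ref{prop:Aspec} (to kill $\rR^1 \Gamma_n$ of the kernel for $n \gg 0$), with the long exact sequence bridging the two.
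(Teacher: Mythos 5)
Your proof is correct and follows exactly the same approach as the paper: take the kernel $K$, note it is finitely generated by noetherianity, and apply Proposition~\ref{prop:Aspec} to get $\rR^1\Gamma_n(K)=0$ for $n\ge\ell(K)$, which via the long exact sequence gives surjectivity. You have merely spelled out the steps the paper leaves implicit.
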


\begin{proof}
Let $K$ be the kernel of $M \to N$, which is finitely generated, and simply take $n \ge \ell(K)$; since $\rR^1 \Gamma_n(K)=0$, the result follows.
\end{proof}

\section{Representations of $H$} \label{s:repH}

Let $\xi \colon \bV \to \bC$ be the linear form defined by $\xi(e_i)=\xi(f_i)=1$ for all $i$. Let $H$ be the subgroup of $\Sp$ that stabilizes $\xi$. We say that a representation of $H$ is {\bf algebraic} if it occurs as a subquotient of a direct sum of tensor powers of $\bV$. We let $\Rep(H)$ denote the category of algebraic representations of $H$. In this section, we determine the structure of this category.

\subsection{Weyl's construction}

Let $\bW = \ker \xi$.  Given a positive integer $n$, we let $V_n$ denote the subspace spanned by $e_1,f_1,\dots,e_n,f_n$ and let $W_n$ denote the kernel of $\xi$ restricted to $V_n$. Similarly, $H_n \subset \Sp(V_n)$ is the stabilizer of $\xi$. The symplectic form $\omega \colon \lw^2\bV \to \bC$ restricts to an alternating form $\omega \colon \lw^2 \bW \to \bC$. Working with finitely many variables, the radical of $\omega$ on $W_n$ is precisely the span of $\sum_{i=1}^n (e_i-f_i)$, and we denote it by $W_n^\perp$.

\begin{proposition} \label{prop:kernel}
Let $d$ be a positive integer. Every nonzero $H_n$-submodule of $W_n^{\otimes d}$ has a nonzero intersection with the kernel of $\pi_n \colon W_n^{\otimes d} \to (W_n/W_n^\perp)^{\otimes d}$. 
\end{proposition}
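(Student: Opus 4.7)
My strategy is to exhibit a Heisenberg-type Lie subalgebra $\mathfrak{u}$ of nilpotent operators on $W_n$, all arising from the action of $H_n$, with two properties: (i) its invariants on any nonzero finite-dimensional $\mathfrak{u}$-representation are nonzero (Engel's theorem), and (ii) its invariants on $W_n^{\otimes d}$ already lie in $\ker \pi_n$. Combining these with a nonzero $H_n$-submodule $M \subset W_n^{\otimes d}$ immediately gives $0 \neq M^{\mathfrak{u}} \subseteq M \cap \ker \pi_n$.

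\textbf{Producing $\mathfrak{u}$.} Under the symplectic isomorphism $V_n \cong V_n^*$ induced by $\omega$, the functional $\xi$ corresponds to a vector which spans $W_n^\perp$; call a generator $w$, so $\omega(w, \cdot) = \xi$. For each $\beta \in W_n$ I would consider the transformation
\[ h_\beta(u) = u + \xi(u)\beta + \omega(\beta, u) w. \]
Using $\xi(w) = \xi(\beta) = 0$ and $\omega(\beta, w) = 0$, a direct expansion of $\omega(h_\beta u, h_\beta v)$ shows $h_\beta \in \Sp(V_n)$; since $\xi \circ h_\beta = \xi$, in fact $h_\beta \in H_n$. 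Writing $h_\beta = I + N_\beta$, one checks $N_\beta^2 = 0$ and $[N_{\beta_1}, N_{\beta_2}] = 2\omega(\beta_1, \beta_2) X$, where $X(u) = \xi(u) w$ satisfies $X^2 = 0$. Thus the Lie algebra $\mathfrak{u}$ generated by $\{N_\beta : \beta \in W_n\}$ is Heisenberg-type and consists of nilpotent operators on $V_n$. Both $N_\beta$ and $X$ send $W_n$ into $W_n^\perp \subseteq W_n$, so they restrict to $W_n$; the induced derivation action on $W_n^{\otimes d}$ is still by nilpotent operators (by the usual multinomial argument).

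\textbf{Locating the invariants inside $\ker \pi_n$.} Fix a vector-space splitting $W_n = W_n^\perp \oplus V'$ and decompose $W_n^{\otimes d} = \bigoplus_{S \subseteq \{1,\dots,d\}} T_S$, where $T_S$ places $W_n^\perp$ in positions $i \in S$ and $V'$ elsewhere. On $W_n$, $N_\beta$ annihilates $W_n^\perp$ and sends $v \in V'$ to $\omega(\beta, v) w$; so the derivation $D_\beta = \sum_i (N_\beta)_i$ maps $T_S$ into $\bigoplus_{i \notin S} T_{S \cup \{i\}}$. For $t = \sum_S t_S \in (W_n^{\otimes d})^{\mathfrak{u}}$, the $T_{\{j\}}$-component of $D_\beta t$ is $(N_\beta)_j t_\emptyset$, which must vanish for every $j$ and every $\beta \in W_n$. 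Since $\omega$ induces a non-degenerate pairing on $V' \cong W_n/W_n^\perp$, the functionals $\{\omega(\beta, \cdot)|_{V'}\}_{\beta \in W_n}$ exhaust $(V')^*$; hence every partial contraction of $t_\emptyset$ at any single position vanishes, forcing $t_\emptyset = 0$. This gives $(W_n^{\otimes d})^{\mathfrak{u}} \subseteq \bigoplus_{|S| \ge 1} T_S = \ker \pi_n$, completing the two ingredients.

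The hard part is producing $\mathfrak{u}$ and verifying its elements really live in $H_n$ (the symplectic check is the only slightly lengthy calculation); after that, the proposition reduces to Engel's theorem applied to $M$, followed by the short contraction argument above.
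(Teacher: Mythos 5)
Your proof is correct and takes a genuinely different, more structural route than the paper's. The paper works with an explicit symplectic basis $v_1, w_1, \ldots, v_n, w_n$ adapted to $\xi$ and, given $u \notin \ker\pi_n$, constructs by hand a unipotent element $g \in H_n$ such that $g(u) - u$ is a nonzero element of $U \cap \ker\pi_n$; this requires a case analysis on which basis tensor has a nonzero coefficient in $u$, together with a separate weight argument that $g(u) \neq u$. Your approach instead exhibits a nilpotent Lie subalgebra $\mathfrak{u} \subset \mathfrak{h}_n$ (the $N_\beta$'s are the logarithms of the paper's unipotent elements, systematized), applies Engel's theorem to produce a nonzero $\mathfrak{u}$-invariant inside the given submodule, and then shows all $\mathfrak{u}$-invariants of $W_n^{\otimes d}$ lie in $\ker\pi_n$ via the contraction computation on the $T_\emptyset$-component. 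The symplectic verification $h_\beta \in H_n$, the relations $N_\beta^2 = 0$ and $[N_{\beta_1},N_{\beta_2}] = 2\omega(\beta_1,\beta_2)X$, and the surjectivity of $\beta \mapsto \omega(\beta,\cdot)|_{V'}$ onto $(V')^*$ all check out. One small simplification you could make: since $X|_{W_n} = 0$, the image of $\mathfrak{u}$ in $\operatorname{End}(W_n)$ is already abelian, so the Heisenberg structure is not really used once you restrict to $W_n$ --- a commuting family of square-zero operators is all Engel's theorem needs here. Your approach buys uniformity and conceptual clarity (no case split, no need to argue $g(u)\neq u$), whereas the paper's explicit basis is reused directly in the proof of Proposition~\ref{prop:weyl-irred}, so substituting your argument would require a small rewrite downstream.
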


\begin{proof}
  If $n=1$, this is clear, so assume $n \ge 2$.  Define a new symplectic basis of $V_n$ by
  \[
    v_1 = \frac{1}{2n} \sum_{i=1}^n (e_i+f_i), \qquad w_1 = \sum_{i=1}^n (f_i-e_i), \qquad v_i = e_1 - e_i, \qquad w_i = f_1 - f_i \qquad (i \ge 2).
  \]
  Then $\xi(v_1)=1$ while $\xi$ is 0 on all other basis vectors, so $W_n$ is the span of $v_2,\dots,v_n,w_1,\dots,w_n$. We will write all elements of $W_n^{\otimes d}$ in terms of the basis given by tensor products of the symplectic basis we just specified. Then $\ker \pi_n$ consists of those vectors spanned by tensors which have at least one instance of $w_1$.

  Let $U$ be a nonzero $H_n$-submodule of $W_n^{\otimes d}$, pick a nonzero vector $u \in U$, and expand it in the basis mentioned above. If $u \notin \ker \pi_n$, then there is a basis element which has a nonzero coefficient for $u$ such that either it
  \begin{enumerate}
  \item contains $v_i$ as a tensor factor for some $i$ and does not contain $w_1$ as a tensor factor, or 
  \item contains $w_i$ as a tensor factor for some $i \ge 2$ and does not contain $w_1$ as a tensor factor.
  \end{enumerate}
  In case (a), define $g \in H_n$ by $g(v_1)=v_1 + w_i$, $g(v_i) = w_1+ v_i$, and $g$ fixes all other basis vectors. Then $g(u) - u \in U \cap \ker \pi_n$, so it suffices to prove that $g(u)\ne u$. The $g$ we defined is the upper-triangular unipotent element in a copy of $\GL_2$ acting on the span of $w_1$ and $v_i$, and so being fixed by $g$ is the same as being a sum of highest weight vectors for this group. However, since we have a basis vector in $u$ that contains $v_i$ but not $w_1$, it is a sum of weight vectors where each weight is of the form $\begin{pmatrix} t_1 & 0 \\ 0 & t_2 \end{pmatrix} \mapsto t_2^r$ with $r > 0$, which is not a dominant weight. Hence we conclude that $g(u)-u \ne 0$.

  Case (b) is similar, we instead define $g \in H_n$ by $g(v_1) = v_1 - v_i$, $g(w_i) = w_1+w_i$ and $g$ fixes all other basis vectors.
\end{proof}

For each $1\le i<j \le d$, we have a contraction map $\bW^{\otimes d} \to \bW^{\otimes (d-2)}$ which applies $\omega$ to the $i$th and $j$th tensor factors and we let $\bW^{[d]}$ denote the kernel over all choices of $i<j$. For each partition $\lambda$ of $d$, choose an embedding of the Schur functor $\bS_\lambda(\bW) \subset \bW^{\otimes d}$ and set $\bS_{[\lambda]} \bW = \bS_\lambda(\bW) \cap W^{[d]}$. For each $n$, we can also define $\bS_{[\lambda]} W_n$, which is nonzero for $n \gg 0$.

\begin{proposition} \label{prop:weyl-irred}
$\bS_{[\lambda]}\bW$ is an irreducible $H$-module.
\end{proposition}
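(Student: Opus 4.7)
The plan is to reduce to the finite-rank setting and then invoke Proposition~\ref{prop:kernel} together with the classical irreducibility of the Weyl construction. Since $\bW = \bigcup_n W_n$, and both $\bS_\lambda$ and the traceless condition are compatible with such unions, we have $\bS_{[\lambda]}\bW = \bigcup_n \bS_{[\lambda]} W_n$. Each $\bS_{[\lambda]} W_n$ is stable under the embedded copy of $H_n \subset H$ obtained by extending the action on $V_n$ by the identity on the remaining basis vectors. Thus, given a nonzero $H$-submodule $U \subset \bS_{[\lambda]}\bW$ and a nonzero $v \in U$ lying in $\bS_{[\lambda]} W_{n_0}$, it suffices to show that for every $n$ sufficiently large the $H_n$-span of $v$ equals $\bS_{[\lambda]} W_n$; taking the union then forces $U = \bS_{[\lambda]}\bW$.

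The central tool is the $H_n$-equivariant surjection $\pi_n^{[\lambda]} \colon \bS_{[\lambda]} W_n \twoheadrightarrow \bS_{[\lambda]}\bar W_n$, where $\bar W_n = W_n/W_n^\perp$ is a genuine symplectic space of dimension $2n-2$. The target is an irreducible $\Sp(\bar W_n) \cong \Sp_{2n-2}$-module by the classical Weyl construction (for $n-1 \ge \ell(\lambda)$), and the action of $H_n$ on $\bar W_n$ factors through a surjective homomorphism $H_n \twoheadrightarrow \Sp(\bar W_n)$, so any $H_n$-submodule of $\bS_{[\lambda]}\bar W_n$ is either zero or everything. Moreover, $\pi_n^{[\lambda]}(v) \ne 0$ for every $n > n_0$: since $w_1^{(n)} := \sum_{i=1}^n(f_i-e_i)$ has nonzero coordinates in positions outside $V_{n_0}$, we may choose a complement to $\bC w_1^{(n)}$ in $W_n$ that contains $W_{n_0}$, and then $W_{n_0}^{\otimes d}$ (which contains $v$) avoids $\ker\pi_n$ entirely. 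Thus the $H_n$-span of $v$ maps onto $\bS_{[\lambda]}\bar W_n$, and $U \cap \bS_{[\lambda]} W_n$ meets every coset of $K_n^{[\lambda]} := \ker \pi_n^{[\lambda]}$ inside $\bS_{[\lambda]} W_n$.

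It remains to show $K_n^{[\lambda]} \subset U$, and this is precisely where Proposition~\ref{prop:kernel} enters: applied to the nonzero $H_n$-submodule $U \cap \bS_{[\lambda]} W_n$, it produces a nonzero element of $U \cap K_n^{[\lambda]}$. I would promote this to full containment by induction on $d = |\lambda|$, decomposing $K_n^{[\lambda]}$ via the splitting $W_n = \bar W_n \oplus \bC w_1^{(n)}$ and the Pieri rule into a direct sum of copies of spaces of the form $\bS_{[\nu]}\bar W_n$ with $\nu \subsetneq \lambda$. Each such summand is $H_n$-irreducible via $H_n \twoheadrightarrow \Sp_{2n-2}$, and combining the inductive hypothesis (applied to the corresponding $\bS_{[\nu]}\bW$) with the varying family $\{H_n\}_{n \ge 1}$ inside $H$ should force each simple constituent into $U$. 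The main obstacle is precisely this last step: making the branching description of $K_n^{[\lambda]}$ compatible with the global $H$-action so that the induction closes cleanly.
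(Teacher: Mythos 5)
Your opening parallels the paper — reduce to the finite-rank stabilizers $H_n$, use the projections $\pi_n \colon \bS_{[\lambda]}W_n \to \bS_{[\lambda]}(W_n/W_n^\perp)$, invoke irreducibility of the target as an $H_n$-module — and your observation that $\pi_n(v) \ne 0$ for any nonzero $v \in \bS_{[\lambda]}W_{n_0}$ and any $n > n_0$ (choose a complement of $\bC w_1^{(n)}$ containing $W_{n_0}$) is correct and is even simpler than the corresponding step in the paper, which invokes Proposition~\ref{prop:kernel} to find $u \in U_n \cap \ker\pi_n$ and then checks by an explicit change of basis that $u \notin \ker\pi_{n+1}$. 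However, the gap is exactly where you place it, and the branching-rule induction will not close it. The splitting $W_n = (W_n/W_n^\perp) \oplus \bC w_1^{(n)}$ underlying the Pieri decomposition is not $H_n$-equivariant: $W_n^\perp$ is $H_n$-stable but has no $H_n$-stable complement, so $W_n$ is not completely reducible as an $H_n$-module, and $\ker\pi_n^{[\lambda]}$ carries only an $H_n$-stable \emph{filtration} with subquotients of the form $\bS_{[\nu]}(W_n/W_n^\perp)$, not a direct sum. Understanding the $H_n$-submodule lattice of that kernel is essentially the problem one is trying to solve. Nor does the proposed inductive hypothesis (about the infinite-dimensional $H$-modules $\bS_{[\nu]}\bW$) bear directly on the finite-dimensional $H_n$-modules appearing in the filtration, and even knowing that every composition factor of $\ker\pi_n^{[\lambda]}$ occurs in $U$ would not force the kernel to lie in $U$.

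The paper closes the argument by a different idea: rather than analyze $\ker\pi_n^{[\lambda]}$ as an $H_n$-module, enlarge $W_{n+1}$ to a \emph{nondegenerate} symplectic subspace $W' \subset \bW$ by adjoining a vector $x \in W_{n+2}$ that completes $w_1^{(n+1)}$ to a symplectic pair. Since $\xi$ vanishes on $W'$, the group $\Sp(W')$ embeds in $H$ (extend by the identity on $(W')^\perp$), and the classical Weyl module $\bS_{[\lambda]}W'$ is irreducible under $\Sp(W')$. The paper shows that $U \cap \bS_{[\lambda]}W'$ contains a highest weight vector $\psi(\alpha)$ for a suitable Borel of $\Sp(W')$, obtained from a highest weight vector $\alpha$ of $\bS_{[\lambda]}(W_{n+1}/W_{n+1}^\perp)$ via an equivariant splitting $\psi$ of $\pi_{n+1}$, using that $\pi_{n+1}(U_{n+1})$ is everything. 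Irreducibility then gives $U \supset \bS_{[\lambda]}W' \supset \bS_{[\lambda]}W_{n+1}$, and taking $n \to \infty$ finishes. This passage to a nondegenerate enlargement of $W_n$ inside $\bW$ is the ingredient your proposal is missing.
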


\begin{proof}
  Let $U \subset \bS_{[\lambda]} \bW$ be a nonzero $H$-submodule. Let $U_n = U \cap \bS_{[\lambda]} W_n$. We have a quotient map $\pi_n \colon \bS_{[\lambda]} W_n \to \bS_{[\lambda]} (W_n/W_n^\perp)$. By Proposition~\ref{prop:kernel}, $U_n \cap \ker \pi_n \ne 0$ for $n \gg 0$. Pick a nonzero vector $u$ in the intersection. Then $u$ also belongs to $U_{n+1}$. Using the notation from the proof of Proposition~\ref{prop:kernel}, let $v^{(n)}_i, w^{(n)}_i$ be the basis defined for $V_n$. Then $w^{(n)}_1 = (w^{(n+1)}_1 - v^{(n+1)}_{n+1} + w^{(n+1)}_{n+1})/2$, so in particular, $u \notin \ker \pi_{n+1}$. To see this, we first embed $\bS_{[\lambda]} W_{n+1} \subset W_{n+1}^{\otimes d}$. Since $u \in \ker \pi_n$, it means that when written in the basis for $V_n$, all basis vectors with nonzero coefficient have $w_1^{(n)}$. When we expand in the basis for $V_{n+1}$, the sum of basis vectors which have $w^{(n+1)}_{n+1}$ but not $w^{(1)}_{n+1}$ or $v^{(n+1)}_{n+1}$ is the result of replacing $w^{(1)}_n$ by $w^{(n+1)}_{n+1}/2$ in $u$, which is nonzero.

  Next, $\bS_{[\lambda]}(W_{n+1}/W_{n+1}^\perp)$ is an irreducible representation of $\Sp(W_{n+1}/W_{n+1}^\perp)$, and hence is irreducible for $H_{n+1}$, so that  $\pi_{n+1}(U_{n+1}) = \bS_{[\lambda]}(W_{n+1}/W_{n+1}^\perp)$. We claim that this implies that $U_{n+1} = \bS_{[\lambda]} W_{n+1}$.

  Pick a vector $x \in W_{n+2}$ so that
  \[
    v_2^{(n+1)}, w_2^{(n+1)}, \dots, v_{n+1}^{(n+1)}, w_{n+1}^{(n+1)}, w_1^{(n+1)}, x
  \]
  is a symplectic basis for the space $W'$ that it spans. This choice of basis determines a Borel subgroup $B' \subset \Sp(W')$. Taking just the first $2n$ vectors gives a basis for $W_{n+1}/W_{n+1}^\perp$, and hence determines a Borel subgroup $B \subset \Sp(W_{n+1}/W_{n+1}^\perp)$. Choose a $\Sp(W_{n+1}/W_{n+1}^\perp)$-equivariant splitting $\psi$ of $\pi_{n+1}$. Let $\alpha$ be a highest weight vector in $\bS_{[\lambda]}(W_{n+1}/W_{n+1}^\perp)$ with respect to $B$. Then $\psi(\alpha)$ is a highest weight vector in $\bS_{[\lambda]} W'$ (this follows from the fact that it is a weight vector of weight $\lambda$; this weight space is 1-dimensional in $\bS_{[\lambda]} W'$), so by irreducibility, we see that $U \cap \bS_{[\lambda]} W' = \bS_{[\lambda]} W'$, and hence the claim is proven. Since this is true for all $n \gg 0$, we conclude that $U = \bS_{[\lambda]} \bW$.
\end{proof}

\begin{proposition} \label{prop:tensor-simples}
  The irreducible constituents of $\bW^{\otimes d}$ are $\bS_{[\lambda]} \bW$ where $|\lambda|\le d$ and $d-|\lambda|$ is even.
\end{proposition}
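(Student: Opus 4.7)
The plan is to induct on $d$ using the short exact sequence
$$0 \to \bW^{[d]} \to \bW^{\otimes d} \to \bW^{\otimes d}/\bW^{[d]} \to 0.$$
The base cases are immediate: $\bW^{\otimes 0} = \bC = \bS_{[\emptyset]}\bW$ and $\bW^{\otimes 1} = \bW = \bS_{[(1)]}\bW$, both with $d - |\lambda|$ even.

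For $d \ge 2$, we first identify the constituents of $\bW^{[d]}$. The set of contraction maps $t_{i,j}$ is permuted by $\fS_d$, so $\bW^{[d]}$ is $\fS_d$-stable, and Schur--Weyl duality yields
$$\bW^{[d]} = \bigoplus_{|\lambda| = d} \cM_\lambda \otimes \bS_{[\lambda]}\bW.$$
Each $\bS_{[\lambda]}\bW$ is a nonzero irreducible $H$-module by Proposition~\ref{prop:weyl-irred} (nonvanishing follows from the remark just before the proposition that $\bS_{[\lambda]} W_n$ is nonzero for $n \gg 0$). Hence the constituents of $\bW^{[d]}$ are exactly the $\bS_{[\lambda]}\bW$ with $|\lambda| = d$, all satisfying the parity condition $d - |\lambda| = 0$.

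For the quotient, the product of the contractions realizes $\bW^{\otimes d}/\bW^{[d]}$ as a submodule of $\bigoplus_{1 \le i<j \le d} \bW^{\otimes(d-2)}$, so by the inductive hypothesis every constituent of the quotient has the form $\bS_{[\mu]}\bW$ with $|\mu| \le d-2$ and $d - |\mu|$ even, giving the ``$\subseteq$'' direction of the claim. Conversely, each individual $t_{i,j}$ is surjective (the restricted pairing $\omega \colon \bW \otimes \bW \to \bC$ is surjective, since for instance $\omega(e_1 - e_2,\, f_1 - f_2) = 2$), so we obtain a surjection $\bW^{\otimes d}/\bW^{[d]} \twoheadrightarrow \bW^{\otimes(d-2)}$; by induction every $\bS_{[\mu]}\bW$ with $|\mu| \le d-2$ and $d-|\mu|$ even already appears there, and hence in $\bW^{\otimes d}$.

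There is no real obstacle: the irreducibility statement of Proposition~\ref{prop:weyl-irred} carries the weight of the argument, and the remainder is a standard Schur--Weyl filtration combined with surjectivity of the restricted symplectic pairing on $\bW$.
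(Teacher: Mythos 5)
Your argument is correct and follows the same strategy as the paper: identify the constituents of the traceless part $\bW^{[d]}$ as exactly the $\bS_{[\lambda]}\bW$ with $|\lambda|=d$, then handle $|\lambda| < d$ by induction using surjectivity of the contraction maps. The extra details you supply (the explicit Schur--Weyl decomposition of $\bW^{[d]}$ and the check that $\omega$ remains surjective on $\bW \otimes \bW$ via $\omega(e_1-e_2, f_1-f_2)=2$) are glossed over in the paper's two-line proof but are exactly the steps being invoked there.
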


\begin{proof}
The simple constituents of $\bW^{[d]}$ are $\bS_{[\lambda]} \bW$ where $|\lambda|=d$. Next, each contraction map $\bW^{\otimes d} \to \bW^{\otimes d-2}$ is surjective, so the rest follows by induction. 
\end{proof}

\subsection{Diagrammatic description}

Let $\cC$ be the following $\bC$-linear category. The objects are finite sets. The space $\Hom_{\cC}(S, T)$ is spanned by pairs $(f, \Gamma)$ where $f \colon S \to T$ is an injection and $\Gamma$ is a partial directed matching on $T \setminus f(S)$. If $\Gamma$ is obtained from $\Gamma'$ by flipping the orientation of a single edge then $(f,\Gamma')$ is identified with $-(f,\Gamma)$ in $\Hom_{\cC}(S,T)$, and these relations generate all relations. We let $\Mod_{\cC}$ denote the category of $\cC$-modules, i.e., the category of $\bC$-linear functors $\cC \to \Vec$, and write $\Mod_{\cC}^{\lf}$ for the full subcategory spanned by locally finite length objects.

We define a $\bC$-linear functor $\cK \colon \cC^\op \to \Rep(H)$ by $\cK(S) = \bV^{\otimes S}$. Given a morphism $(f,\Gamma) \colon S \to T$, we define $\bV^{\otimes T} \to \bV^{\otimes S}$ as follows. The injection $f$ identifies $S$ with a subset of $T$ and we map the corresponding tensor factors indexed by elements in $T$ to those they correspond to in $S$. If two elements $x,y \in T$ are connected by an edge of the partial matching with orientation $x \to y$, we apply $\omega$ to those two factors with the vector in position $x$ placed in the first argument. For all other factors, we apply $\xi$.

For the definition of the tensor product $\otimes^\cC$, see \cite[2.1.9]{infrank}.

\begin{theorem} \label{thm:Hrep}
The functor $\Mod_{\cC}^{\lf} \to \Rep(H)$ given by $M \mapsto \cK \otimes^{\cC} M$ is an equivalence of categories.
\end{theorem}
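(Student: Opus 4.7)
The plan is to verify three standard conditions implying the desired Morita-style equivalence, following the general framework in \cite[\S 2.1]{infrank}: (i) each $\cK(S) = \bV^{\otimes S}$ is injective in $\Rep(H)$; (ii) every finite length object of $\Rep(H)$ embeds in a finite direct sum of objects of the form $\cK(S)$; and (iii) the natural map $\cC(S,T) \to \Hom_H(\cK(T), \cK(S))$ is an isomorphism. Once these are in place, a formal argument shows that $V \mapsto \Hom_H(\cK(-), V)$ is a quasi-inverse to $M \mapsto \cK \otimes^{\cC} M$ on locally finite length objects.

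For (ii), I would first classify the simple objects of $\Rep(H)$. The map $\xi$ is $H$-equivariant with kernel $\bW$, yielding a short exact sequence $0 \to \bW \to \bV \to \bC \to 0$ of $H$-modules. Taking tensor powers gives a filtration on $\bV^{\otimes d}$ whose composition factors are composition factors of $\bW^{\otimes k}$ for $k \le d$; by Propositions~\ref{prop:weyl-irred} and~\ref{prop:tensor-simples} these are precisely the $\bS_{[\lambda]}\bW$ with $|\lambda| \le d$. Since every algebraic $H$-representation is a subquotient of some $\bV^{\otimes d}$ by definition, the simples of $\Rep(H)$ are exactly $\{\bS_{[\lambda]}\bW\}_{\lambda}$. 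Each simple embeds into a $\cK(S)$ via the chain of $H$-submodules $\bS_{[\lambda]}\bW \subset \bW^{\otimes|\lambda|} \subset \bV^{\otimes|\lambda|}$; combined with (i), an induction on length extends this to every finite length object.

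The technical heart of the proof is (iii), which is essentially a First Fundamental Theorem of invariant theory for $H$. Using the $H$-equivariant isomorphism $\bV \cong \bV^{\vee}$ given by $v \mapsto \omega(v,-)$ (where $\bV^{\vee}$ denotes the restricted algebraic dual), the hom space $\Hom_H(\cK(T), \cK(S))$ should be identified with the algebraic $H$-invariants in $\bV^{\otimes (S \sqcup T)}$, and this space should be freely spanned (modulo the sign relation from flipping edge orientations) by the diagrams in $\cC(S,T)$. I would attack this by reduction to the finite-rank groups $H_n$, bootstrapping from the classical FFT for $\Sp_{2n}$ (which spans the $\Sp_{2n}$-invariants by pure matchings) and incorporating $\xi$ as an additional $H_n$-invariant available for evaluating unmatched tensor factors. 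Linear independence of the diagrams can be checked using the socle control afforded by the weight-space bookkeeping of Propositions~\ref{prop:kernel} and~\ref{prop:weyl-irred}; the spanning direction is the harder one and would reduce to showing that any $H_n$-invariant in $V_n^{\otimes d}$ decomposes as a sum of products of factors each built from $\omega$ or $\xi$ alone, by induction on $d$.

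With (iii) in hand, injectivity (i) can be extracted by computing $\Ext^1_H(\bS_{[\mu]}\bW, \cK(S)) = 0$ via a long exact sequence coming from the filtration of $\bV^{\otimes d}$ used in (ii), reducing to purely combinatorial statements about the hom spaces now determined; alternatively one can adapt the strategy of \cite[\S 4.2]{infrank}, presenting $\cK(S)$ as the injective envelope of its socle by directly computing extensions against simples. The principal obstacle in the whole program is unquestionably the FFT computation in (iii); everything else is either categorical formalism or a direct consequence of the explicit hom-space description.
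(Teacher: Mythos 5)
You take a genuinely different route from the paper, and as written your route has a real gap at exactly the step you flag as hardest.

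The paper never attempts a direct computation of $\Hom_H(\cK(T),\cK(S))$. Instead it invokes the abstract criterion \cite[Theorem 2.1.11]{infrank} (and its corollary), which reduces the equivalence on finite length objects to two conditions: (a) the Weyl-style objects $\bS_{[\lambda]}\bW$ are irreducible, which is Proposition~\ref{prop:weyl-irred}; and (b) $\bS_{[\lambda]}\bW$ admits no nonzero map into $\bW^{\otimes d}$ unless $d=|\lambda|$. The $d<|\lambda|$ half of (b) is Proposition~\ref{prop:tensor-simples}; the $d>|\lambda|$ half is handled by a short weight-theoretic argument (choosing a torus $T\subset \Sp(\bW')$, defining a ``magnitude'' of weights, and noting that the proof of Proposition~\ref{prop:weyl-irred} produces a weight vector of magnitude exactly $d$ in every submodule of $\bW^{\otimes d}$). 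The general machinery of \cite[\S 2.1]{infrank} then does all the remaining work; in particular, it internalizes precisely the FFT-type hom-space computation that you have placed at the centre of your plan.

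Your proposal instead elevates condition (iii) — an explicit First Fundamental Theorem for $H$, i.e.\ that $\cC(S,T)\to\Hom_H(\cK(T),\cK(S))$ is an isomorphism — to the technical heart, and you correctly identify it as the principal obstacle. But you have only gestured at how to prove it, and the sketch is thinner than it appears. The reduction to finite rank is delicate because the stabilizer $H_n\subset\Sp_{2n}$ of $\xi|_{V_n}$ is \emph{not} reductive: via $\omega$ it is the stabilizer of the vector $\sum_{i\le n}(e_i-f_i)$, i.e.\ an odd symplectic group, a semidirect product of $\Sp_{2n-2}$ with a Heisenberg group. The classical $\Sp_{2n}$-FFT therefore does not transfer directly, and ``incorporating $\xi$ as an additional invariant'' is not a one-line patch — one needs a separate argument for spanning, and the linear-independence bookkeeping you propose to pull out of Propositions~\ref{prop:kernel} and~\ref{prop:weyl-irred} is not obviously available from them, since those control submodule structure rather than spans of diagrammatic morphisms. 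Until (iii) is actually established, conditions (i) and (ii) as you set them up do not close the argument. The moral of the paper's proof is that \cite[Theorem 2.1.11]{infrank} was engineered so that one never has to prove the FFT by hand; verifying the irreducibility and degree constraints (a) and (b) is substantially lighter than what you have proposed.
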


\begin{proof}
Since the functor in question is cocontinuous, and each category is locally noetherian and artinian, it suffices to check that it induces an equivalence on the categories of finite length objects. For this, we apply \cite[Theorem 2.1.11]{infrank} and its corollary. Criterion (a) is Proposition~\ref{prop:weyl-irred}.

Now we verify criterion (b). Proposition~\ref{prop:tensor-simples} shows that $\bS_{[\lambda]} \bW$ has no nonzero maps to $\bW^{\otimes d}$ if $d<|\lambda|$. So we have to show that the same is true if $d>|\lambda|$. Consider the span $\bW'$ of $\{e_1-e_2,e_1-e_3,\dots,f_1-f_2,f_1-f_3,\dots\}$ and let $T \subset \Sp(\bW')$ be a maximal torus with respect to this basis. This consists of maps $e_1-e_i \mapsto \alpha_i (e_1-e_i)$ and $f_1-f_i \mapsto \alpha_i^{-1} (f_1-f_i)$. We define the magnitude of a weight $(\alpha_1,\dots) \mapsto \prod_i \alpha_i^{n_i}$ to be $\sum_i |n_i|$. The proof of Proposition~\ref{prop:weyl-irred} shows that every submodule of $\bW^{\otimes d}$ has a weight vector whose weight has magnitude $d$, which proves what we want.
\end{proof}

\begin{corollary}
The injective envelope of $\bS_{[\lambda]}\bW$ is $\bV_{\lambda}$. The $\bV_{\lambda}$ account for all the indecomposable injectives of $\Rep(H)$.
\end{corollary}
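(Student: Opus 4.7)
The plan is to leverage the equivalence $\Mod_{\cC}^{\lf} \simeq \Rep(H)$ from Theorem~\ref{thm:Hrep}, combined with the corollary of \cite[Theorem 2.1.11]{infrank} invoked in its proof, which identifies the standard objects $\cK(S) = \bV^{\otimes |S|}$ as injective objects in $\Rep(H)$ (they correspond under the equivalence to the representable $\cC$-modules). With this in hand, each $\bV_{\lambda}$ appears as a direct summand of $\bV^{\otimes |\lambda|}$ via the $\GL$-isotypic decomposition $\bV^{\otimes d} = \bigoplus_{|\lambda|=d} \bV_{\lambda}^{\oplus \dim \cM_{\lambda}}$, which persists as an $H$-module decomposition since $H \subset \GL$. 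Each $\bV_{\lambda}$ is therefore injective in $\Rep(H)$.

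Next I would determine the socle. We have canonical $H$-equivariant inclusions
\[
\bS_{[\lambda]}\bW \;\subset\; \bS_{\lambda}\bW \;\subset\; \bS_{\lambda}\bV \;=\; \bV_{\lambda},
\]
placing a copy of the simple $\bS_{[\lambda]}\bW$ inside $\bV_{\lambda}$. I would show that this subobject is the entire socle by adapting the magnitude argument used to verify criterion~(b) in the proof of Theorem~\ref{thm:Hrep}: letting $T$ denote the torus acting diagonally on the span $\bW'$ of $\{e_1-e_i, f_1-f_i\}_{i\ge 2}$, every nonzero $H$-submodule of $\bV_{\lambda} \subset \bV^{\otimes |\lambda|}$ must contain a $T$-weight vector, and a generator of a simple submodule must be of maximal magnitude $|\lambda|$; such a maximal-magnitude vector necessarily lies in the traceless part, hence in $\bS_{[\lambda]}\bW$. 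This shows that the socle of $\bV_{\lambda}$ is the simple module $\bS_{[\lambda]}\bW$, and in particular $\bV_{\lambda}$ is indecomposable.

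An injective module with simple socle is the injective envelope of that socle, so $\bV_{\lambda}$ is the injective envelope of $\bS_{[\lambda]}\bW$. For the exhaustion statement, $\Rep(H)$ is a locally finite length Grothendieck category (inherited via the equivalence), and in such a category every indecomposable injective is the injective envelope of a unique simple. Since Proposition~\ref{prop:weyl-irred} (together with Theorem~\ref{thm:Hrep}) lists all simples of $\Rep(H)$ as the $\bS_{[\lambda]}\bW$, the $\bV_{\lambda}$ exhaust the indecomposable injectives.

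The main obstacle is the socle computation: one must rule out that some nonzero $H$-submodule of $\bV_{\lambda}$ is supported purely on weights of magnitude strictly less than $|\lambda|$, and must show that the maximal-magnitude weight vectors in $\bV_{\lambda}$ actually lie in the traceless subspace $\bS_{[\lambda]}\bW$ rather than in a larger ``traceful'' piece. Both points should follow from the same magnitude bookkeeping already deployed in the proof of Theorem~\ref{thm:Hrep}, but the details need to be handled with some care, since we are now working inside $\bV^{\otimes d}$ rather than $\bW^{\otimes d}$ and must control the tensor factors on which $\xi$ has been applied.
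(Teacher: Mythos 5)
Your scaffolding is sound: $\bV_\lambda$ is indeed injective in $\Rep(H)$ (it is a $\GL$-isotypic summand of $\bV^{\otimes|\lambda|}=\cK([\lvert\lambda\rvert])$, and these are injective by the equivalence of Theorem~\ref{thm:Hrep}), the inclusion $\bS_{[\lambda]}\bW\subset\bV_\lambda$ is clear, and an injective with simple socle is the injective envelope of its socle. The problem is the step you yourself flag as the ``main obstacle'': the socle computation. This is a genuine gap, and the magnitude argument does not transfer from $\bW^{\otimes d}$ to $\bV^{\otimes d}$ the way you suggest.

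Concretely, the torus $T$ used in the proof of Theorem~\ref{thm:Hrep} acts with nonzero weights only on $\bW'\subset\bW\subset\bV$; the lines $\bW/\bW'$ and $\bV/\bW$ are $T$-trivial. So the magnitude-$d$ weight vectors of $\bV^{\otimes d}$ live in $(\bW')^{\otimes d}\subset\bW^{\otimes d}$, and the assertion ``a generator of a simple submodule must be of maximal magnitude $|\lambda|$'' is \emph{equivalent} to the assertion that every simple submodule of $\bV_\lambda$ lies in $\bS_\lambda(\bW)$. That assertion is true, but it cannot be read off from weight-magnitude bookkeeping: it is an infinite-dimensional stability phenomenon. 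Indeed, at finite level it fails. Writing $V_n=\mathrm{span}\{v_1,w_1,v_2,w_2,\dots,v_n,w_n\}$ with the symplectic basis of Proposition~\ref{prop:kernel}, the $\Sp(V_n)$-invariant (hence $H_n$-invariant) tensor $\sum_{i=1}^n(v_i\otimes w_i-w_i\otimes v_i)$ is a trivial submodule of $V_n^{\otimes 2}$, and it involves $v_1\notin W_n$, so it does not lie in $W_n^{\otimes 2}$. This invariant has $T$-weight zero, of minimal magnitude. The reason no such submodule survives in $\bV^{\otimes 2}$ is that the corresponding infinite sum does not converge in $\bV^{\otimes 2}$ --- a fact that has nothing to do with magnitudes. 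So filling your gap requires an independent argument that $\Hom_H(\bS_{[\mu]}\bW,\bV_\lambda)=0$ for $\mu\neq\lambda$, which is essentially as hard as the corollary itself.

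The paper avoids the socle computation entirely. It works on the other side of the contravariant equivalence $\Mod_{\cC^\op}^{\lf}\to\Rep(H)$ of \cite[2.1.10]{infrank}: the left Kan extension $i_\#(\cM_\lambda)$ along $i\colon B\fS_n\to\cC^\op$ is the indecomposable projective of $\Mod_{\cC^\op}^{\lf}$ covering the simple indexed by $\lambda$, and transporting it across the equivalence gives the injective envelope of $\bS_{[\lambda]}\bW$; identifying that object with $\bV_\lambda$ is then a one-line Yoneda computation, $\Hom_{\cC^\op}(i_\#(\cM_\lambda),\cK)=\Hom_{\fS_n}(\cM_\lambda,\bV^{\otimes n})=\bV_\lambda$. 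If you want to keep your socle-based outline, you would in effect have to re-derive this dual statement, which defeats the purpose of the alternative route.
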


\begin{proof}
  Let $B\fS_n$ be the category with one object $\ast$ with $\Hom(\ast,\ast)=\fS_n$, the symmetric group on $n$ letters. Let $i \colon B\fS_n \to \cC^\op$ be the functor taking $\ast$ to the set $[n]=\{1,\ldots,n\}$, and acting in the obvious manner on morphisms. The pullback functor $i^* \colon \Mod_{\cC^\op} \to \Rep(\fS_n)$ has a left adjoint $i_\#$, called the left Kan extension. Let $\cM_{\lambda}$ be the Specht module for $\fS_n$ associated to the partition $\lambda$. Since $\cM_{\lambda}$ is an projective object of $\Rep(\fS_n)$, it follows that $i_\#(\cM_{\lambda})$ is a projective object of $\Mod_{\cC^\op}$. One easily sees that it is indecomposable, and that every indecomposable projective object of $\Mod_{\cC^\op}^{\lf}$ has this form, for a unique $\lambda$.

  By \cite[2.1.10]{infrank}, we have a contravariant equivalence $\Phi \colon \Mod_{\cC^\op}^\lf \to \Rep(H)$ given by $M \mapsto \hom_{\cC^\op}(M,\cK)$, so that $\Phi(i_\#(\cM_\lambda))$ is the injective envelope of $\bS_{[\lambda]} \bW$. We have
\begin{displaymath}
 \Hom_{\cC^\op}(i_\#(\cM_{\lambda}), \cK)
= \Hom_{\fS_n}(\cM_{\lambda}, \cK([n]))
= \Hom_{\fS_n}(\cM_\lambda, \bV^{\otimes n})
= \bV_{\lambda},
\end{displaymath}
which proves the corollary.
\end{proof}

\begin{corollary}
Every finitely generated object of $\Rep(H)$ has finite injective dimension.
\end{corollary}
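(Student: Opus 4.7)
The plan is to reduce to showing each simple object $\bS_{[\lambda]}\bW$ has finite injective dimension, and then argue by induction on $\vert\lambda\vert$.

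First, since $\Rep(H) \cong \Mod_{\cC}^{\lf}$ is locally of finite length, any finitely generated object is automatically of finite length (a finite generating set lies in a finite length subobject, and it generates all of the object). A standard d\'evissage through short exact sequences shows that a finite length object has finite injective dimension provided each of its composition factors does. So it suffices to treat the simple $S = \bS_{[\lambda]}\bW$. Its injective envelope is $\bV_\lambda$ by the previous corollary. The key structural claim to prove is: $\bV_\lambda$ is itself of finite length as an $H$-module, and moreover every composition factor of $\bV_\lambda$ other than its socle $S$ is of the form $\bS_{[\mu]}\bW$ with $\vert\mu\vert<\vert\lambda\vert$. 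Granting this, induction on $\vert\lambda\vert$ closes the argument: the quotient $Q=\bV_\lambda/S$ has finite injective dimension by the inductive hypothesis, and then the short exact sequence $0 \to S \to \bV_\lambda \to Q \to 0$ together with injectivity of $\bV_\lambda$ bounds $\operatorname{inj.dim}(S) \le 1 + \operatorname{inj.dim}(Q)$.

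To verify the structural claim I would proceed in two stages. First restrict $\bV_\lambda$ to $\Sp$: by \S\ref{ss:repSp}(\ref{sp:tca}), $\bV_\lambda$ is the $\Sp$-injective envelope of $\bS_{[\lambda]}\bV$, so $\bS_{[\lambda]}\bV$ appears exactly once as the socle, and by \S\ref{ss:repSp}(\ref{sp:simples}) every other $\Sp$-composition factor $\bS_{[\mu]}\bV$ satisfies $\vert\mu\vert<\vert\lambda\vert$. Next, further restrict each such $\bS_{[\nu]}\bV$ to $H$ using the $H$-equivariant short exact sequence $0 \to \bW \to \bV \to \bC \to 0$ obtained from $\xi$; this induces an $H$-filtration on $\bV^{\otimes n}$ whose associated graded is a direct sum of copies of $\bW^{\otimes k}$ for $k \le n$, with top piece $\bW^{\otimes n}$. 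By Proposition~\ref{prop:tensor-simples}, an $H$-composition factor $\bS_{[\mu]}\bW$ with $\vert\mu\vert = \vert\nu\vert$ can only come from the top piece $\bW^{\otimes \vert\nu\vert}$. Intersecting this top piece with $\bS_{[\nu]}\bV \subset \bV^{\otimes\vert\nu\vert}$ should give exactly $\bS_{[\nu]}\bW$, since Weyl's traceless-tensor construction is defined by imposing trace vanishing under $\omega$, and $\omega$ restricted to $\bW$ is the corresponding form used in defining $\bS_{[\nu]}\bW$. Hence $\bS_{[\lambda]}\bW$ with $\vert\lambda\vert = \vert\nu\vert$ appears in $\bS_{[\nu]}\bV$ only if $\lambda = \nu$, and only once, coming from the socle copy of $\bS_{[\lambda]}\bV$ inside $\bV_\lambda$.

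The main obstacle is the last step: proving that $\bS_{[\nu]}\bV \cap \bW^{\otimes\vert\nu\vert} = \bS_{[\nu]}\bW$ inside $\bV^{\otimes\vert\nu\vert}$, i.e., that the intersection of Weyl's traceless summand with the sub-tensor-space $\bW^{\otimes\vert\nu\vert}$ is precisely the $\bW$-version of Weyl's construction. This is a concrete but nontrivial check about compatibility of Schur functors with the trace maps $\omega$ on $\lw^2\bV$ and $\lw^2\bW$ under the inclusion $\bW \hookrightarrow \bV$; it could also be verified by truncating to $V_n$ and $W_n$ and invoking the finite-rank analogue, in the spirit of Proposition~\ref{prop:kernel}.
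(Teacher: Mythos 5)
Your proof is correct and ultimately rests on the same structural fact as the paper's: the injective envelope $\bV_\lambda$ of $\bS_{[\lambda]}\bW$ has finite length, contains $\bS_{[\lambda]}\bW$ with multiplicity one, and every other composition factor $\bS_{[\mu]}\bW$ has $|\mu|<|\lambda|$; induction on $|\lambda|$ then finishes. The difference is in how this fact is obtained. The paper reads it off from the diagrammatic equivalence $\Rep(H)\cong\Mod_{\cC}^{\lf}$ of Theorem~\ref{thm:Hrep}: in $\Mod_{\cC}$ the injective $\bV_\lambda$ is supported in degrees $\le|\lambda|$ while the simple sits in degree exactly $|\lambda|$, so the cokernel of $\bS_{[\lambda]}\bW\hookrightarrow\bV_\lambda$ lives strictly below. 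You instead restrict $\bV_\lambda$ through $\GL\supset\Sp\supset H$, using \S\ref{ss:repSp}(\ref{sp:simples}) on the $\Sp$-side and the filtration of $\bV^{\otimes n}$ induced by $\bW\subset\bV$ together with Proposition~\ref{prop:tensor-simples} on the $H$-side. This route works, and the step you flag as the main obstacle is in fact routine: choosing the embedding $\bS_\nu(\bV)\subset\bV^{\otimes n}$ to be the image of a Young idempotent $c_\nu$, one has $\bS_\nu(\bV)\cap\bW^{\otimes n}=\{x\in\bW^{\otimes n}: c_\nu x=x\}=c_\nu\bW^{\otimes n}=\bS_\nu(\bW)$; and since each contraction $t_{ij}$ carries $\bW^{\otimes n}$ into $\bW^{\otimes(n-2)}$, one also gets $\bV^{[n]}\cap\bW^{\otimes n}=\bW^{[n]}$, whence $\bS_{[\nu]}\bV\cap\bW^{\otimes n}=\bS_{[\nu]}\bW$. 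One small terminological slip: since $\bW$ is a subrepresentation of $\bV$, the piece $\bW^{\otimes n}$ is the bottom, not the top, of the induced filtration on $\bV^{\otimes n}$, though this does not affect the argument. Your route is more self-contained on the representation-theoretic side; the paper's diagrammatic observation is shorter once the equivalence with $\Mod_{\cC}^{\lf}$ is in place.
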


\begin{proof}
This is clearly true in $\Mod_{\cC}$: every indecomposable injective object $\bV_\lambda$ has finite length and is supported in degrees $\le |\lambda|$.
\end{proof}

\subsection{Description via tca's}

Let $B$ be the tca $\Sym(\bV \oplus \lw^2{\bV})$. The following is an analog of \S \ref{ss:repSp}(\ref{sp:tca}).

\begin{theorem} \label{thm:Htca}
We have a natural equivalence of categories $\Phi \colon \Rep(H) \cong \Mod_B^{\lf}$. This equivalence satisfies $\Phi(\bV_{\lambda})=I_B(\bV_{\lambda})$ and $\Phi(\bS_{[\lambda]}\bW)=\bV_{\lambda}$ (with $B_+$ acting by~$0$).
\end{theorem}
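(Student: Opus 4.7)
The plan is to adapt the argument used for the analogous $\Sp$ result (\S \ref{ss:repSp}(\ref{sp:tca}), proved in \cite[4.3.2]{infrank}). By Theorem~\ref{thm:Hrep}, $\Rep(H) \cong \Mod_\cC^{\lf}$, so it suffices to construct an equivalence $\Mod_\cC^{\lf} \cong \Mod_B^{\lf}$ that sends the injective envelope $\bV_\lambda$ of $\bS_{[\lambda]}\bW$ on the left to $I_B(\bV_\lambda)$ on the right. Both categories are locally finite length Grothendieck categories with indecomposable injectives indexed by partitions: the $\{\bV_\lambda\}_\lambda$ on the $\Rep(H)$ side (by the first corollary after Theorem~\ref{thm:Hrep}) and the $\{I_B(\bV_\lambda)\}_\lambda$ on the $\Mod_B^{\lf}$ side (by \S \ref{ss:inj-proj}). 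A standard reconstruction principle (in the form used to prove Theorem~\ref{thm:Hrep} via \cite[Theorem 2.1.11]{infrank}) says that such a category is determined by the $\bC$-linear structure of its full subcategory of indecomposable injectives, so the problem reduces to producing an equivalence between those subcategories.

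The key computation is to match Hom-spaces. By Proposition~\ref{prop:inj-proj},
\[
  \dim\Hom_B(I_B(\bV_\mu), I_B(\bV_\lambda)) = \dim\Hom_\GL(\bV_\mu, B \otimes \bV_\lambda) = [B \otimes \bV_\lambda : \bV_\mu].
\]
On the $H$-side, using $\Rep(H) \cong \Mod_\cC^{\lf}$ and the identification $\bV_\lambda \leftrightarrow i_\#(\cM_\lambda)$ (from the first corollary after Theorem~\ref{thm:Hrep}), $\Hom_H(\bV_\mu, \bV_\lambda)$ is spanned by oriented diagrams from $[|\lambda|]$ to $[|\mu|]$ consisting of an injection together with an oriented partial matching on the complement, modulo the Young symmetrizer relations coming from $\cM_\lambda$ and $\cM_\mu$. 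Since $B = \Sym(\bV) \otimes \Sym(\lw^2\bV)$, these diagrams enumerate the $\bV_\mu$-isotypic contributions to $B \otimes \bV_\lambda$: the $\Sym(\bV)$-factor accounts for unmatched injected slots (corresponding under $\Phi$ to contraction by $\xi$) and the $\Sym(\lw^2\bV)$-factor accounts for matched pairs (corresponding to contraction by $\omega$). The same diagrammatics shows that composition on the $H$-side corresponds to multiplication in $B$, so the equivalence of injective subcategories is realized.

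The reconstruction principle then produces the functor $\Phi$, and the identification $\Phi(\bV_\lambda) = I_B(\bV_\lambda)$ is built in. For the image of simples: $\bS_{[\lambda]}\bW$ is the socle of $\bV_\lambda$ (by Proposition~\ref{prop:weyl-irred} combined with the first corollary after Theorem~\ref{thm:Hrep}), and any equivalence of abelian categories preserves socles; the socle of $I_B(\bV_\lambda)$ is $\bV_\lambda$ with $B_+$ acting by $0$ (by \S \ref{ss:inj-proj}), so $\Phi(\bS_{[\lambda]}\bW) = \bV_\lambda$ with $B_+$ acting by $0$. The main obstacle is the combinatorial Hom-matching in the middle paragraph: one must verify carefully that the oriented-diagram basis for $\Hom_H(\bV_\mu, \bV_\lambda)$, after imposing the Young symmetrizer relations from both partitions, recovers the multiplicity $[B \otimes \bV_\lambda : \bV_\mu]$ and that composition of diagrams intertwines with multiplication in the tca $B$, tracking the signs arising from the antisymmetry of $\omega$.
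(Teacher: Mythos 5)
Your strategy is viable in outline, but it relocates the entire content of the theorem into a step you leave unverified, and it is more roundabout than the paper's route.

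The paper's proof is a pointer to \cite[Theorem~4.3.1]{infrank}: one constructs the equivalence $\Mod_{\cC}^{\lf} \cong \Mod_B^{\lf}$ directly by unwinding the definition of morphisms in $\cC$. A covariant $\cC$-module $M$ assigns an $\fS_n$-representation $M_n = M([n])$ to each $n$; the morphisms $(f,\varnothing)$ given by injections $[n]\hookrightarrow[n+1]$ encode exactly an action of $\bV$, while the morphisms whose matching pairs two extra slots (with the sign rule for reversing an edge) encode an action of $\lw^2\bV$, and the combined data with the composition relations in $\cC$ is precisely a module over $B=\Sym(\bV\oplus\lw^2\bV)$. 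Composition in $\cC$ corresponds to multiplication in $B$ by definition, so there is nothing extra to verify. Composing with Theorem~\ref{thm:Hrep} then gives $\Phi$; that $\Phi$ sends $\bS_{[\lambda]}\bW$ to the simple $\bV_\lambda$ is immediate, and the injective statement follows by taking injective envelopes.

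Your version instead reconstructs the category from its injective subcategory, counts Hom dimensions via Proposition~\ref{prop:inj-proj}, and then appeals to a diagrammatic matching of morphism spaces \emph{and} compositions. The dimension count is correct but not logically load-bearing: once you have an actual functor between the injective subcategories compatible with composition, you do not also need to know the dimensions agree. The load-bearing claim is exactly the one you flag as ``the main obstacle'': that the oriented-diagram description of $\Hom_H(\bV_\mu,\bV_\lambda)$, after imposing both Young symmetrizer relations, recovers $\Hom_B(I_B(\bV_\mu),I_B(\bV_\lambda))$ with composition intertwining tca multiplication and with the correct signs from $\omega$. That claim is not a small check to defer — it \emph{is} the equivalence $\Mod_{\cC}^{\lf}\cong\Mod_B^{\lf}$, and proving it in the Hom-between-injectives formulation is strictly harder than proving it at the level of raw $\cC$-modules (where you never have to impose symmetrizer relations at all, because $i_\#(\cM_\lambda)$ and the adjunction handle them for you). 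There is also a direction subtlety you should be careful with: the corollary identifying $\bV_\lambda$ with $i_\#(\cM_\lambda)$ lives in $\Mod_{\cC^\op}^{\lf}$ via a contravariant equivalence, so the Hom spaces you describe are on the opposite category and the orientations of the diagrams need to be tracked accordingly. None of this is fatal, but as written the proposal asserts the theorem rather than proving it.
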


\begin{proof}
The construction $\Phi$, and the fact that it is an equivalence, is exactly analogous to \cite[Theorem~4.3.1]{infrank}. It is immediate from the construction that $\Phi$ takes the simple $\bS_{[\lambda]} \bW$ to the simple $\bV_{\lambda}$. Since $\bV_{\lambda}$ is the injective envelope of $\bS_{[\lambda]}$ in $\Rep(H)$ and $I_B(\bV_{\lambda})$ is the injective envelope of $\bV_{\lambda}$ in $\Mod_B$, it follows that $\Phi(\bV_{\lambda}) = I_B(\bV_{\lambda})$.
\end{proof}

\begin{corollary}
We have
\begin{displaymath}
\dim \Ext^i_H(\bS_{[\lambda]}\bW, \bS_{[\mu]}\bW)
= \dim \Hom_{\GL}(\lw^i(\bV \oplus \lw^2{\bV}) \otimes \bV_{\lambda}, \bV_{\mu}).
\end{displaymath}
\end{corollary}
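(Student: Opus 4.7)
The plan is to transfer the computation to the tca side via Theorem~\ref{thm:Htca} and then apply a Koszul resolution argument. By the equivalence $\Phi \colon \Rep(H) \cong \Mod_B^{\lf}$, we have
$$\Ext^i_H(\bS_{[\lambda]}\bW, \bS_{[\mu]}\bW) \cong \Ext^i_{\Mod_B^{\lf}}(\bV_\lambda, \bV_\mu),$$
where $\bV_\lambda$ and $\bV_\mu$ are the simple $B$-modules with $B_+$ acting by zero. The indecomposable injectives $I_B(\bV_\nu)$ are injective in $\Mod_B$ by construction in \S\ref{ss:inj-proj}, and they lie in $\Mod_B^{\lf}$ by Theorem~\ref{thm:Htca}, so an injective resolution of $\bV_\mu$ in $\Mod_B^{\lf}$ is also an injective resolution in $\Mod_B$. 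Hence $\Ext^i_{\Mod_B^{\lf}}(\bV_\lambda, \bV_\mu) = \Ext^i_{\Mod_B}(\bV_\lambda, \bV_\mu)$, and it suffices to compute the latter.

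Setting $W = \bV \oplus \lw^2 \bV$, so that $B = \Sym(W)$, the standard Koszul complex provides a resolution
$$\cdots \to B \otimes \lw^i W \otimes \bV_\lambda \to \cdots \to B \otimes W \otimes \bV_\lambda \to B \otimes \bV_\lambda \to \bV_\lambda \to 0$$
in $\Mod_B$. Each term has the form $B \otimes V$ for some polynomial $\GL$-representation $V$, and such modules are genuinely projective in $\Mod_B$: the adjunction $\Hom_B(B \otimes V, -) = \Hom_{\GL}(V, -)$ combined with the semisimplicity of $\Rep^{\pol}(\GL)$ shows that the functor on the right is exact. (Note the contrast with $\Mod_A$, where the analogous $A \otimes V$ fails to be projective because $\Rep(\Sp)$ is not semisimple.)

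Applying $\Hom_B(-, \bV_\mu)$ to the Koszul resolution, the $i$th term becomes $\Hom_{\GL}(\lw^i W \otimes \bV_\lambda, \bV_\mu)$. The Koszul differentials factor through multiplication by elements of $W \subset B_+$, and $B_+$ annihilates $\bV_\mu$, so all differentials in the Hom complex vanish. Taking cohomology yields
$$\Ext^i_B(\bV_\lambda, \bV_\mu) = \Hom_{\GL}(\lw^i(\bV \oplus \lw^2 \bV) \otimes \bV_\lambda, \bV_\mu),$$
as claimed. The argument is largely formal once the equivalence of Theorem~\ref{thm:Htca} is in place; the only point requiring care is the comparison of Ext groups between $\Mod_B^{\lf}$ and $\Mod_B$, which is handled via the injective resolution remark above.
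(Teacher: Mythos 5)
Your proposal is correct and follows the same route as the paper: transfer via Theorem~\ref{thm:Htca} to $\Ext^i_B(\bV_\lambda,\bV_\mu)$, then compute with the Koszul resolution, whose differentials die on $\Hom_B(-,\bV_\mu)$ because they factor through $B_+$. You supply details the paper leaves implicit --- in particular the comparison of $\Ext$ in $\Mod_B^{\lf}$ with $\Ext$ in $\Mod_B$ via the fact that the $I_B(\bV_\nu)$ are injective in the full module category, and the genuine projectivity of $B\otimes V$ owing to semisimplicity of $\Rep^{\pol}(\GL)$ --- which is a welcome amplification rather than a deviation.
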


\begin{proof}
By Theorem~\ref{thm:Htca}, we have
\begin{displaymath}
\Ext^i_H(\bS_{[\lambda]}\bW, \bS_{[\mu]}\bW)
= \Ext^i_B(\bV_{\lambda}, \bV_{\mu}).
\end{displaymath}
This can be computed using the Koszul resolution, which yields the stated result.
\end{proof}

\subsection{An equality of dimensions}

We will require the following result in \S \ref{s:gen}.

\begin{proposition} \label{prop:dimcount}
For partitions $\lambda$ and $\mu$, we have
\begin{displaymath}
\dim \Hom_{\Sp}(\bV_{\lambda}, \bV_{\mu}(\bV) \otimes \Sym(\bV)) =
\dim \Hom_H(\bV_{\lambda}, \bV_{\mu})
\end{displaymath}
\end{proposition}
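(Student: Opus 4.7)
The plan is to show that both sides equal the common quantity $\dim \Hom_\GL(\bV_\lambda, \bV_\mu \otimes B)$, by applying Proposition~\ref{prop:inj-proj} twice: once for $B$ itself, and once for the tca $R = \Sym(\lw^2{\bV})$. The tca descriptions of $\Rep(H)$ in Theorem~\ref{thm:Htca} and of $\Rep(\Sp)$ in \S\ref{ss:repSp}(\ref{sp:tca}) are the bridge between the Hom groups in question and injective-module Hom groups over $B$ and $R$, respectively.

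For the right-hand side I would invoke Theorem~\ref{thm:Htca}, which identifies $\Hom_H(\bV_\lambda, \bV_\mu)$ with $\Hom_B(I_B(\bV_\lambda), I_B(\bV_\mu))$; Proposition~\ref{prop:inj-proj} then converts the latter dimension into $\dim \Hom_\GL(\bV_\lambda, \bV_\mu \otimes B)$.

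For the left-hand side I would first decompose $\bV_\mu \otimes \Sym(\bV)$ by the Pieri rule as $\bigoplus_\nu \bV_\nu$, the sum running over partitions $\nu$ such that $\nu/\mu$ is a horizontal strip. This is a $\GL$-decomposition, but because $\Sp \subset \GL$ each summand is in particular an $\Sp$-subrepresentation, so the same formula is also an $\Sp$-decomposition and $\Hom_\Sp(\bV_\lambda, -)$ splits as a direct sum accordingly. Under the equivalence $\Rep(\Sp) \cong \Mod_R^\lf$, each summand $\bV_\nu$ corresponds to the injective $R$-module $I_R(\bV_\nu)$, and Proposition~\ref{prop:inj-proj} applied to $R$ gives $\dim \Hom_\Sp(\bV_\lambda, \bV_\nu) = \dim \Hom_\GL(\bV_\lambda, \bV_\nu \otimes R)$. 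Summing over $\nu$ and recombining yields
\[
\dim \Hom_\Sp(\bV_\lambda, \bV_\mu \otimes \Sym(\bV)) = \dim \Hom_\GL(\bV_\lambda, \bV_\mu \otimes \Sym(\bV) \otimes R),
\]
and since $\Sym(\bV) \otimes R \cong B$ as $\GL$-algebras the right side collapses to $\dim \Hom_\GL(\bV_\lambda, \bV_\mu \otimes B)$, matching the right-hand side of the proposition.

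I do not anticipate a genuine obstacle: once one commits to the common intermediate $\dim \Hom_\GL(\bV_\lambda, \bV_\mu \otimes B)$, both sides unwind by essentially formal manipulations. The only step that is not automatic is the observation that the $\GL$-Pieri decomposition of $\bV_\mu \otimes \Sym(\bV)$ doubles as an $\Sp$-decomposition, which is what permits the $\Sp$-Hom to be computed summand by summand via the $\Rep(\Sp) \cong \Mod_R^\lf$ equivalence.
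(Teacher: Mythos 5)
Your argument is correct, and it relies on the same two pillars as the paper's: the tca descriptions $\Rep(\Sp)\cong\Mod_{B_2}^{\lf}$ and $\Rep(H)\cong\Mod_B^{\lf}$, together with Proposition~\ref{prop:inj-proj} applied to each of $B_2=R$ and $B$. The organizational difference is real, though: you reduce both sides to the common quantity $\dim\Hom_{\GL}(\bV_\lambda,\bV_\mu\otimes B)$, and to handle the left side you first break $\bV_\mu\otimes\Sym(\bV)$ into its Pieri summands $\bV_\nu$ so that Proposition~\ref{prop:inj-proj} can be applied one irreducible at a time, then reassemble via $\Sym(\bV)\otimes R\cong B$. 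The paper avoids the Pieri rule altogether by passing directly from $\Hom_{B_2}$ to $\Hom_B$: it observes $P_{B_2}(\bV_\mu\otimes\Sym(\bV))=B_2\otimes\bV_\mu\otimes\Sym(\bV)=B\otimes\bV_\mu=P_B(\bV_\mu)$ and then uses that extension of scalars along $B_2\hookrightarrow B$ is left adjoint to restriction to identify $\Hom_{B_2}(P_{B_2}(\bV_\lambda),P_B(\bV_\mu))$ with $\Hom_B(P_B(\bV_\lambda),P_B(\bV_\mu))$. That chain is tighter, never leaving the module-category side. Your route is slightly longer but has the small advantage of only ever invoking the equivalence $\Rep(\Sp)\cong\Mod_R^{\lf}$ and Proposition~\ref{prop:inj-proj} on individual irreducibles $\bV_\nu$, sidestepping any worry about whether the equivalence sends the infinite-length injective $\bV_\mu\otimes\Sym(\bV)$ to $I_R(\bV_\mu\otimes\Sym(\bV))$ on the nose. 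In exchange you do need that $\Hom_{\Sp}(\bV_\lambda,-)$ and $\Hom_{\GL}(\bV_\lambda,-)$ commute with the direct sum over Pieri summands; this holds because $\bV_\lambda$ has finite length in the locally noetherian category $\Rep(\Sp)$, but it is worth stating explicitly.
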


\begin{proof}
We have seen that $\Rep(H)$ is equivalent to $\Mod_B^{\lf}$ where $B$ is the tca $\Sym(\bV \oplus \lw^2{\bV})$, and the injective $\bV_{\lambda}$ in $\Rep(H)$ corresponds to the injective $I_B(\bV_{\lambda})$ in $\Mod_B$. Write $B=B_1 \otimes B_2$, where $B_1=\Sym(\bV)$ and $B_2=\Sym(\lw^2{\bV})$. Then by \S \ref{ss:repSp}(\ref{sp:tca}), $\Rep_{\Sp}$ is equivalent to $\Mod_{B_2}^{\lf}$, and the injective $\bV_{\lambda}$ in $\Rep(\Sp)$ corresponds to the injective $I_{B_2}(\bV_{\lambda})$ in $\Mod_{B_2}$. We therefore have
\begin{align*}
\dim \Hom_{\Sp}(\bV_{\lambda}, \bV_{\mu} \otimes \Sym(\bV))
&= \dim \Hom_{B_2}(I_{B_2}(\bV_{\lambda}), I_{B_2}(\bV_{\mu} \otimes \Sym(\bV))) \\
&= \dim \Hom_{B_2}(P_{B_2}(\bV_{\lambda}), P_{B_2}(\bV_{\mu} \otimes \Sym(\bV))) \\
&= \dim \Hom_{B_2}(P_{B_2}(\bV_{\lambda}), P_B(\bV_{\mu})) \\
&= \dim \Hom_B(P_B(\bV_{\lambda}), P_B(\bV_{\mu})) \\
&= \dim \Hom_B(I_B(\bV_{\lambda}), I_V(\bV_{\mu})) \\
&= \dim \Hom_H(\bV_{\lambda}, \bV_{\mu}).
\end{align*}
In the first step, we used the equivalence $\Rep(\Sp)=\Mod_{B_2}^{\lf}$; in the second Proposition~\ref{prop:inj-proj}; in the third, the identification
\begin{displaymath}
P_{B_2}(\bV_{\mu} \otimes \Sym(\bV))=B_2 \otimes \bV_{\mu} \otimes \Sym(\bV) = B \otimes \bV_{\mu} = P_B(\bV_{\mu});
\end{displaymath}
in the fourth, that extension of scalars is left adjoint to restriction of scalars; in the fifth, Proposition~\ref{prop:inj-proj}; and in the final step, the equivalence $\Rep(H) \cong \Mod_B^{\lf}$.
\end{proof}

\subsection{The dependence on $\xi$} \label{ss:xi}

Suppose that $\xi' \colon \bV \to \bC$ is another non-zero linear form, and let $H' \subset \Sp$ be its stabilizer. It is natural to ask if $\Rep(H')$ is equivalent to $\Rep(H)$.

In fact, this is not true in general. Indeed, take $\xi'$ to be the linear functional given by $\xi'(v)=\langle e_1, v \rangle$.  Then $H'$ is just the stabilizer of $e_1$. It follows that the map $\bC \to \bV$ taking~1 to $e_1$ is a map of $H'$ representations; moreover, it lands in the kernel of $\xi'$. One easily sees that $\Hom_{H'}(\bV, \bC)$ is one-dimensional and spanned by $\xi'$. Thus the map $\bC \to \bV$ has no section, and so $\bC$ is not injective in $\Rep(H')$. Since $\bC$ is distinguished as the unit object for the tensor structure, it follows that there is no equivalence of tensor categories $\Rep(H) \cong \Rep(H')$. With more effort, one can show there is no equivalence at all.

Let $\iota \colon \bV \to \bV^*$ be the map $\iota(v)=\langle v, - \rangle$. The reasoning of the previous paragraph applies whenever $\xi'$ belongs to $\im(\iota)$: for such functionals, $\Rep(H')$ is not equivalent to $\Rep(H)$. We believe that if $\xi' \not\in \im(\iota)$ then $\Rep(H')$ is equivalent to $\Rep(H)$, but we have not investigated this carefully.

\section{The local structure of $A$-modules} \label{s:loc}

\subsection{Statement of results}

Recall that $A$ is the polynomial ring in variables $\{x_i,y_i\}_{i \ge 1}$. Let $\fm$ be the ideal of $A$ generated by $x_i-1$ and $y_i-1$ for $i \ge 1$. Note that $\fm$ is the kernel of the ring homomorphism $A \to \bC$ induced by the linear functional $\xi$. The following is the main result of this section:

\begin{theorem} \label{thm:loc}
Let $M$ be an $A$-module. Then there is a canonical and functorial isomorphism $M_{\fm} \to M/\fm M \otimes_{\bC} A_{\fm}$ of $\vert A_{\fm} \vert$-modules. In particular, $M_{\fm}$ is a free $\vert A_{\fm} \vert$-module.
\end{theorem}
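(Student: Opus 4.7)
The plan is to leverage the $H$-equivariance of the localization: since $H$ stabilizes $\xi$ (and hence $\fm$), the group $H$ acts on $A_{\fm}$ preserving $\fm A_{\fm}$ and trivially on the residue $A_{\fm}/\fm A_{\fm} \cong \bC$. The key intuition is that this rich $H$-action, combined with the $\Sp$-equivariance of $M$, should force $M_{\fm}$ to be free over $A_{\fm}$ with basis canonically identified with $M/\fm M$.

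First, I would construct the canonical map $\Phi_M \colon M_{\fm} \to M/\fm M \otimes_{\bC} A_{\fm}$ via a Taylor-expansion-type construction at $\xi$. The augmentation $A \to \bC$ induced by $\xi$ is $H$-equivariant and admits an obvious $H$-equivariant $\bC$-algebra section (the inclusion of constants), giving an $H$-equivariant $\bC$-linear splitting $A_{\fm} = \bC \oplus \fm A_{\fm}$. Iterating this splitting formally produces a canonical expansion of each element of $M_{\fm}$ as a sum of elements of $M/\fm M$ with coefficients in the graded pieces $\fm^k A_{\fm}/\fm^{k+1} A_{\fm}$, packaged as a single element of $M/\fm M \otimes_{\bC} A_{\fm}$; naturality in $M$ is automatic from the construction.

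Second, verify $\Phi_M$ is an isomorphism for the free modules $M = A \otimes V$ (with $V$ a polynomial representation of $\GL$), where under the identifications $M/\fm M \cong V$ and $M_{\fm} = A_{\fm} \otimes V$, the map reduces to the identity. Then reduce the general case to $M$ finitely generated using that both $(-)_{\fm}$ and $(-)/\fm(-) \otimes_{\bC} A_{\fm}$ commute with filtered colimits; for finitely generated $M$, use Proposition~\ref{prop:Aquo} to present $M$ as the cokernel of a map between free modules. Naturality of $\Phi$ and right-exactness of both functors then give surjectivity of $\Phi_M$ by a diagram chase.

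The main obstacle will be injectivity of $\Phi_M$ for finitely generated $M$. The discussion in \S\ref{ss:xi} shows that $H$-equivariance alone is insufficient, so the injectivity step must crucially use that $M$ is $\Sp$-equivariant and not just $H$-equivariant. I would attempt to reduce to a finite-rank problem via the specialization functor $\Gamma_n$ of \S\ref{ss:repSp}(\ref{sp:sp}): for $n \ge \ell(M)$, $\Gamma_n(M)$ is a finitely generated $\Sp_{2n}$-equivariant module over $\Sym(\bV_{\le n})$, and its localization at the maximal ideal corresponding to $\xi|_{\bV_{\le n}}$ should be free by classical equivariant commutative algebra, using that $\Sp_{2n}$ acts transitively on $\bV_{\le n}^* \setminus \{0\}$. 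Compatibility of specialization with localization and passage to the colimit should then yield injectivity of $\Phi_M$, completing the proof.
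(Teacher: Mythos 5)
Your proposal correctly identifies the two difficulties (constructing a canonical map, and proving injectivity using $\Sp$- rather than mere $H$-equivariance), but both steps as sketched have real gaps, and the paper's actual mechanism is quite different.

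On the construction of the map: the ``Taylor expansion'' is not well-defined as described. The $\bC$-linear splitting $A_{\fm} = \bC \oplus \fm A_{\fm}$ only tells you about the quotient $A_{\fm}/\fm A_{\fm}$; iterating it produces an element of the $\fm$-adic completion, not of $A_{\fm}$, and more importantly it gives no canonical way to lift an element of $M/\fm M$ back into $M_{\fm}$ (such a lift would amount to a splitting of $M_{\fm} \to M/\fm M$, which is exactly what is to be proven, not assumed). Your check on $M = A \otimes V$ uses the coordinates of the presentation and so does not produce a functorial map. What the paper does instead is introduce the group $K = \prod_i K_i$, a product of upper-triangular Borel subgroups of $\Sp_2$; the comodule map $M \to M \otimes \bC[K]$ composed with reduction mod $\fm$ gives $\phi_M \colon M \to M/\fm M \otimes \bC[K]$, and one checks explicitly (Proposition~\ref{prop:phiA}) that $\phi_A$ localizes to an isomorphism $A_{\fm} \cong \bC[K]_{\fn}$. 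This is the canonical, functorial map, and it has nothing to do with a choice of splitting.

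On injectivity: you are right that $H$-equivariance alone cannot suffice (\S\ref{ss:xi}) and that the $\Sp$-action must be exploited. But the specialization route as you describe it is under-specified: you would need to establish that $\Gamma_n$ commutes (or commutes well enough) with localization at $\fm$ and with reduction mod $\fm$, and then explain how the classical finite-rank freeness statement (valid because $\xi|_{\bV_{\le n}}$ lies in the open $\Sp_{2n}$-orbit) assembles into a statement about $M_{\fm}$ as $n \to \infty$; none of this is automatic, and $\Gamma_n$ is only left-exact. The paper's mechanism is sharper: it proves the decomposition $\fsp = \fk \oplus \fh$ (Proposition~\ref{prop:sp=k+h}) and uses it to show that $\ker(\phi_M)$, a priori only a $\fk$-stable submodule, is actually $\Sp$-stable (Lemma~\ref{lem:ker}) --- this is precisely the point where full $\Sp$-equivariance enters. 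Surjectivity of $(\phi_M)_{\fm}$ is a Nakayama argument (Lemma~\ref{lem:phi-locm}). Then, because $\ker(\phi_M)$ is an $A$-submodule, one can apply the equivariant Nakayama lemma (Lemma~\ref{lem:nak}); \emph{that} lemma is where the specialization functor and the surjectivity corollary (Corollary~\ref{cor:sp-surj}) appear. So the paper does use $\Gamma_n$, but in a much more targeted way than your proposal, and only after the structural reductions via $K$ and the Lie algebra splitting are in place.

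In summary, the central missing idea is the auxiliary group $K$ and the splitting $\fsp = \fk \oplus \fh$: these give both the canonical map and the leverage needed to make the $\Sp$-equivariance useful, and without some substitute for them your construction is not canonical and your injectivity reduction is incomplete.
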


The proof of Theorem~\ref{thm:loc} will take the remainder of this section. The arguments are similar to (but somewhat easier than) those used in \cite[\S 3.2]{sym2noeth} and \cite[\S 5]{periplectic} to analyze analogous generic categories.

\subsection{The group $K$}

Let $K_i$ be the upper triangular Borel subgroup of $\Sp_2$, regarded as a subgroup of $\Sp$ by acting on $e_i$ and $f_i$. We represent elements of $K_i$ as matrices
\begin{displaymath}
\begin{pmatrix} t_i & u_i \\ 0 & t_i^{-1} \end{pmatrix},
\end{displaymath}
and identify the coordinate ring $\bC[K_i]$ of $K_i$ with $\bC[t_i^{\pm 1}, u_i]$. We let $K$ be the product of the $K_i$'s. This is the affine group scheme with coordinate ring $\bC[K]=\bC[t_i^{\pm 1}, u_i]_{i \ge 1}$.

Suppose that $V$ is an algebraic representation of $\Sp$ and $x \in V$. Then for $i \gg 0$ the subgroup $K_i$ of $\Sp$ fixes $x$. It follows that the group $K$ naturally acts on $V$; in other words, we can restrict algebraic representations of $\Sp$ to $K$, even though $K$ is not quite a subgroup of $\Sp$. The $K$-subrepresentation of $V$ generated by $x$ is easily seen to be finite dimensional. It follows that $V$ is naturally a comodule over $\bC[K]$, that is, we have a natural comultiplication map
\begin{displaymath}
V \to V \otimes \bC[K].
\end{displaymath}
Explicitly, this map takes $v \in V$ to the function $K \to V$ given by $k \mapsto kv$.

The group $K_i$ is contained in $\Sp$, and so its Lie algebra $\fk_i$ is contained in $\fsp$. We let $\fk = \sum_{i \ge 1} \fk_i$, which is a Lie subalgebra of $\fsp$. This is not quite the Lie algebra of $K$---$\operatorname{Lie}(K)$ is the product of the $\fk_i$'s---but the difference is negligible for our purposes.

We let $\fh_n$ be the Lie algebra of $H_n$ which we think of as a subalgebra of $\fsp$, and set $\fh = \sum_n \fh_n$.

\begin{proposition} \label{prop:sp=k+h}
We have $\fsp=\fk \oplus \fh$.
\end{proposition}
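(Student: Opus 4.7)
The plan is to prove the direct sum decomposition by an explicit calculation. I would begin by fixing a concrete basis of $\mathfrak{k}_i$: taking derivatives at the identity of the matrices defining $K_i$ shows that $\mathfrak{k}_i$ is spanned by a semisimple generator $H_i$ (with $H_i(e_i) = e_i$, $H_i(f_i) = -f_i$, fixing all other basis vectors) and a nilpotent generator $E_i$ (with $E_i(f_i) = e_i$, $E_i(e_i) = 0$, fixing all other basis vectors). Thus $\mathfrak{k} = \bigoplus_{i \ge 1} \mathfrak{k}_i$ consists of finite linear combinations $\sum_i (a_i H_i + b_i E_i)$.

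Next, I would reformulate membership in $\mathfrak{h}$: an element $X \in \mathfrak{sp}$ fixes all but finitely many basis vectors, hence preserves $V_n$ for some $n$; such an $X$ lies in $\mathfrak{h}_n$, and hence in $\mathfrak{h}$, if and only if $\xi \circ X = 0$ on $\mathbb{V}$, i.e.\ $\xi(X(e_i)) = \xi(X(f_i)) = 0$ for all $i \ge 1$. This is immediate from the definition of $H_n$ as the stabilizer of $\xi\vert_{V_n}$, together with the fact that $X$ acts as zero on the remaining basis vectors.

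With this in hand, uniqueness is automatic: if $X = \sum_i (a_i H_i + b_i E_i) \in \mathfrak{k} \cap \mathfrak{h}$, then evaluating $\xi \circ X$ on $e_i$ and $f_i$ gives $a_i = 0$ and $b_i - a_i = 0$, forcing $X = 0$. For existence, given $X \in \mathfrak{sp}$, I would define
\[
X_k \;=\; \sum_{i \ge 1} \Bigl( \xi(X(e_i))\, H_i \;+\; \bigl(\xi(X(e_i)) + \xi(X(f_i))\bigr)\, E_i \Bigr),
\]
which is a finite sum because $X$ fixes all but finitely many basis vectors. A direct check using the formulas for $H_i$ and $E_i$ shows $\xi(X_k(e_i)) = \xi(X(e_i))$ and $\xi(X_k(f_i)) = \xi(X(f_i))$ for every $i$, so $X - X_k$ annihilates $\xi$ and therefore lies in $\mathfrak{h}$.

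There is no substantive obstacle here: the proof reduces to solving a $2 \times 2$ linear system at each index $i$, and the whole content of the statement is that the Borel $\mathfrak{k}_i \subset \mathfrak{sp}(V_i)$ maps isomorphically onto the two-dimensional cokernel $\mathfrak{sp}(V_i) / \mathfrak{h}_i$ defined by the linear functionals $X \mapsto \xi(X(e_i))$ and $X \mapsto \xi(X(f_i))$. The only mild care needed is to confirm that the construction assembles across all $i$ into a \emph{finite} sum, which follows from the definition of $\mathfrak{sp}$ as consisting of elements fixing all but finitely many basis vectors.
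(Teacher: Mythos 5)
Your proof is correct and takes essentially the same route as the paper's: at each index $i$ you solve the $2\times 2$ linear system that identifies the unique element of $\fk_i$ whose difference with $X$ annihilates $\xi$ on $e_i, f_i$, and finiteness of the resulting sum comes from $X$ killing all but finitely many basis vectors. The paper phrases this directly in terms of matrix entries (grouping the rows in pairs), while you use the cleaner reformulation $\fh = \{X : \xi \circ X = 0\}$ together with the explicit generators $H_i, E_i$ of $\fk_i$, but the computation is the same.
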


\begin{proof}
We regard elements of $\fsp$ as $\infty \times \infty$ matrices by using the ordered basis $e_1,f_1,e_2,f_2,\ldots$ of $\bV$. For an $\infty \times \infty$ matrix $m$, we let $m_i$ be the $2 \times \infty$ matrix whose rows are given by the $2i-1$ and $2i$ rows of $m$. Let $X \in \fsp$ be given. Note that $X_i$ has finitely many non-zero entries and $X_i=0$ for $i \gg 0$. There is a unique matrix $Y(i)$ of the form
\begin{displaymath}
\begin{pmatrix}
\cdots & 0 & t & u & 0 & \cdots \\
\cdots & 0 & 0 & -t & 0 & \cdots
\end{pmatrix}
\end{displaymath}
where the $t$ is in column $2i-1$ such that each row of $X_i+Y(i)$ sums to zero. For $i \gg 0$ we have $X_i=0$ and so $Y(i)=0$. Let $Y \in \fk$ be the matrix with $Y_i=Y(i)$ for all $i$. Then each row of $X+Y$ sums to~0, and so $X+Y \in \fh$. Clearly, $Y$ is the unique element of $\fb$ with this property: indeed, if $Y'$ were a second such element then for all $i$ we would have $X_i+Y'_i=0$, and thus $Y'_i=Y(i)=Y_i$, and thus $Y'=Y$. The result thus follows.
\end{proof}

\subsection{The map $\phi$}

Let $M$ be an $A$-module. We have the comultiplication map $M \to M \otimes \bC[K]$ discussed above. Composing this with the quotient map $M \to M/\fm M$, we obtain a linear map
\begin{displaymath}
\phi_M \colon M \to M/\fm M \otimes \bC[K].
\end{displaymath}
In fact, we can define $\phi_M$ for any $K$-equivariant $\vert A \vert$-module (where ``$K$-equivariant'' means the action of $K$ comes from a comodule structure). We now study this map. We begin with the case $M=A$:

\begin{proposition} \label{prop:phiA}
We have the following:
\begin{enumerate}
\item The map $\phi_A \colon A \to \bC[K]$ is the $\bC$-algebra homomorphism given by
\begin{displaymath}
\phi_A(x_i)=t_i, \qquad \phi_A(y_i)=t_i^{-1}+u_i.
\end{displaymath}
\item The extension $\fn$ of $\fm$ along $\phi_A$ is the maximal ideal of $\bC[K]$ generated by $t_i-1$ and $u_i$ for $i \ge 1$.
\item The map $\phi_A$ induces an isomorphism of localizations $A_{\fm} \to \bC[K]_{\fn}$.
\end{enumerate}
\end{proposition}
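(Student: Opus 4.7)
The overall plan is to reduce everything to an explicit computation on the generators $e_i, f_i$ of $\bV$, then observe that the resulting map has a very transparent algebraic structure. Part (a) is a direct unpacking of the definition of comultiplication, part (b) is an elementary manipulation of the formulas from (a), and part (c) is proven by exhibiting an explicit inverse after inverting the $x_i$.

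For part (a), I would first observe that $\phi_A$ is a ring homomorphism: the $K$-action on $A=\Sym(\bV)$ is by algebra automorphisms (it is the restriction of the $\Sp$-action), so the comultiplication $A \to A \otimes \bC[K]$ is an algebra map, and composing with the algebra map $A \to A/\fm \cong \bC$ induced by $\xi$ yields an algebra map. To compute the generators, I would use the explicit description: $\phi_A(v)$ is the function $k \mapsto \xi(k \cdot v)$. For $k = \bigl(\begin{smallmatrix} t_i & u_i \\ 0 & t_i^{-1} \end{smallmatrix}\bigr)_i \in K$, the action on the standard basis gives $k \cdot e_i = t_i e_i$ and $k \cdot f_i = u_i e_i + t_i^{-1} f_i$ (and $k$ fixes the other basis vectors). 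Applying $\xi$, which sends every $e_j, f_j$ to $1$, produces $\phi_A(x_i) = t_i$ and $\phi_A(y_i) = u_i + t_i^{-1}$, as claimed.

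For part (b), the ideal $\fm = (x_i - 1, y_i - 1)_{i \ge 1}$ maps to the ideal generated by $t_i - 1$ and $t_i^{-1} + u_i - 1$. Since $t_i$ is a unit in $\bC[K]$, one has $t_i^{-1} + u_i - 1 = -t_i^{-1}(t_i - 1) + u_i$, so this ideal coincides with $(t_i - 1, u_i)_{i \ge 1} = \fn$, which is the maximal ideal of the identity point of $K$.

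For part (c), the key observation is that $\phi_A(x_i) = t_i$ is a unit in $\bC[K]$, so $\phi_A$ factors through a ring homomorphism $\widetilde{\phi}_A \colon A[x_i^{-1}]_{i \ge 1} \to \bC[K]$. I would then construct an inverse by defining $\psi \colon \bC[K] \to A[x_i^{-1}]_{i \ge 1}$ on generators by $t_i^{\pm 1} \mapsto x_i^{\pm 1}$ and $u_i \mapsto y_i - x_i^{-1}$, and check on generators that $\widetilde{\phi}_A \circ \psi$ and $\psi \circ \widetilde{\phi}_A$ are the identity. Since $x_i \equiv 1 \pmod{\fm}$, the $x_i$ are units in $A_{\fm}$, so the localization map $A \to A_{\fm}$ factors through $A[x_i^{-1}]_{i \ge 1}$ and realizes $A_\fm$ as the localization of $A[x_i^{-1}]_{i \ge 1}$ at $\fm$. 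Under the isomorphism $\widetilde{\phi}_A$, the extension of $\fm$ is precisely $\fn$ by part (b), so taking localizations yields $A_{\fm} \cong \bC[K]_{\fn}$.

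There is really no deep obstacle here; the arguments are essentially mechanical once one writes out the comultiplication explicitly. The only point requiring slight care is that everything is happening in polynomial rings in countably many variables, so when writing down $\psi$ one should note that the polynomial ring $\bC[K]$ is the colimit of its finitely-generated subalgebras, which makes defining $\psi$ by its values on generators unambiguous.
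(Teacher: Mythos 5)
Your proposal is correct and matches the paper's argument essentially step for step: the same computation of the $K_i$-action on $e_i, f_i$ for part (a), the same elementary rewriting for part (b), and the same inverse map $\psi$ (with $t_i \mapsto x_i$, $u_i \mapsto y_i - x_i^{-1}$) for part (c). The only cosmetic difference is that you observe the intermediate isomorphism $A[x_i^{-1}]_{i\ge 1} \cong \bC[K]$ before localizing, while the paper defines $\psi$ directly into $A_\fm$ and checks $\psi^{-1}(\fm A_\fm)=\fn$; both arrive at the same place.
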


\begin{proof}
Since $K$ acts on $A$ by $\bC$-algebra homomorphisms, the map $\phi_A$ is an algebra homomorphism. The group $K_j$ fixes $x_i$ and $y_i$ for $i \ne j$. The action of $K_i$ is given by
\begin{displaymath}
\begin{pmatrix} t_i & u_i \\ 0 & t_i^{-1} \end{pmatrix} x_i = t_i x_i, \qquad
\begin{pmatrix} t_i & u_i \\ 0 & t_i^{-1} \end{pmatrix} y_i = t_i^{-1} y_i+u_i x_i.
\end{displaymath}
We thus see that the comultiplication map $A \to A \otimes \bC[K]$ takes $x_i$ to $x_i \otimes t_i$ and $y_i$ to $y_i \otimes t_i^{-1} + x_i \otimes u_i$. Since the map $A \to A/\fm=\bC$ takes $x_i$ and $y_i$ to~1, we find $\phi_A(x_i)=t_i$ and $\phi_A(y_i)=t_i^{-1}+u_i$, which proves~(a). Statement~(b) now follows. We now prove (c). Define $\psi \colon \bC[K] \to A_{\fm}$ to be the algebra homomorphism given by $\psi(t_i)=x_i$ and $\psi(u_i)=y_i-x_i^{-1}$. The kernel of the composition $\bC[K] \to A_{\fm} \to A_{\fm}/\fm A_{\fm}$ is the ideal $\fn$, from which it follows that $\psi^{-1}(\fm A_{\fm})=\fn$. Thus $\psi$ extends to a ring homomorphism $\bC[K]_{\fn} \to A_{\fm}$, which is clearly the inverse to the localization of $\phi_A$.
\end{proof}

In what follows, we regard $\bC[K]$ as an $A$-module via $\phi_A$. We now study the map $\phi_M$ for an arbitrary $A$-module $M$. Since $K$ acts on $M$ by $A$-semilinear automorphisms, it follows that $\phi_M$ is a morphism of $A$-modules. Our goal is to prove that $\phi_M$ induces an isomorphism after localizing at $\fm$, which will establish the theorem. We require some lemmas first.

\begin{lemma} \label{lem:phi-modm}
Let $M$ be a $K$-equivariant $\vert A \vert$-module. Then $\phi_M$ induces an isomorphism modulo $\fm$.
\end{lemma}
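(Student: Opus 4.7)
The plan is to trace through the definitions and show that the reduction of $\phi_M$ mod $\fm$ is the identity map on $M/\fm M$; once this is done the lemma is immediate.

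First I will identify what the target of $\phi_M$ becomes modulo $\fm$. As established in the discussion preceding the lemma, $\phi_M$ is $A$-linear where the $A$-action on $M/\fm M \otimes_{\bC} \bC[K]$ is given by $\phi_A$ acting on the $\bC[K]$-factor. Applying $- \otimes_A A/\fm$ and invoking Proposition~\ref{prop:phiA}(b), which says $\phi_A(\fm)\bC[K] = \fn$, the right-hand side collapses:
\[
\bigl(M/\fm M \otimes_{\bC} \bC[K]\bigr) \otimes_A A/\fm \;=\; M/\fm M \otimes_{\bC} \bigl(\bC[K]/\fn\bigr) \;\cong\; M/\fm M,
\]
where the last isomorphism is evaluation at the identity $e \in K$ (under which $\bC[K]/\fn \cong \bC$). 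So the induced map $\overline{\phi}_M$ takes values in $M/\fm M$, and by construction factors the composition
\[
M \xrightarrow{\phi_M} M/\fm M \otimes_{\bC} \bC[K] \xrightarrow{\mathrm{id} \otimes \mathrm{ev}_e} M/\fm M
\]
through the canonical projection $\pi \colon M \twoheadrightarrow M/\fm M$.

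Next I will compute this composition on an element $m \in M$. The defining property of the comultiplication $\Delta \colon M \to M \otimes \bC[K]$ is that $(\mathrm{id} \otimes \mathrm{ev}_k)\Delta(m) = km$ for every $k \in K$. Since $\phi_M = (\pi \otimes \mathrm{id}) \circ \Delta$, specializing at $k = e$ yields
\[
(\mathrm{id} \otimes \mathrm{ev}_e)(\phi_M(m)) \;=\; \pi(e \cdot m) \;=\; \pi(m).
\]
Hence the composition above equals $\pi$, so the induced map $\overline{\phi}_M \colon M/\fm M \to M/\fm M$ is the identity and is in particular an isomorphism.

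The argument is essentially a diagram chase whose only substantive input is Proposition~\ref{prop:phiA}(b); I do not anticipate any real obstacle, since the identification $\bC[K]/\fn \cong \bC$ is exactly evaluation at the unit of the (co)group, which by coassociativity acts as the identity on any comodule.
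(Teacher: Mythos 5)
Your proposal is correct and follows essentially the same route as the paper: both arguments reduce to the counit axiom for the comodule structure (evaluation at the identity of $K$ composed with $\Delta$ is the identity on $M$), combined with the identification $\bC[K]/\fn \cong \bC$ from Proposition~\ref{prop:phiA}(b). One tiny terminology slip: the fact that evaluation at $e$ acts as the identity on a comodule is the counit axiom, not coassociativity.
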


\begin{proof}
Consider the following diagram:
\begin{displaymath}
\xymatrix@C=2cm{
M \ar[r]^-{\Delta} \ar[rd]_-{\phi_M} & M \otimes \bC[K] \ar[r]^{1 \otimes \eta} \ar[d] & M \ar[d] \\
& M/\fm M \otimes \bC[K] \ar[r]^{1 \otimes \eta} & M/\fm M }
\end{displaymath}
Here $\Delta$ is the comultiplication map and $\eta$ is the natural map $\bC[K] \to \bC[K]/\fn \bC[K]=\bC$. We note that $\fn \subset \bC[K]$ corresponds to the identity element of $K$, and thus $\eta$ is the counit of the Hopf algebra structure. Thus the first line above composes to $\id_M$. It follows that the composition of $\phi_M$ with the morphism in the bottom row is the natural map $M \to M/\fm M$, which proves the result.
\end{proof}

\begin{lemma} \label{lem:phi-locm}
Let $M$ be an $A$-module. Then the localization of $\phi_M$ at $\fm$ is surjective.
\end{lemma}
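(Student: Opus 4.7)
The plan is to reduce to a situation where Nakayama's lemma applies to the local ring $A_\fm$, using the isomorphism $A_\fm \cong \bC[K]_\fn$ from Proposition~\ref{prop:phiA}(c). The subtlety is that $A_\fm$ is \emph{not} Noetherian (it is a localization of a polynomial ring in countably many variables), so Nakayama forces us to work with modules that are finitely generated over $\vert A \vert$, not merely finitely generated as equivariant $A$-modules.

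First I would reduce to the case where $M$ is finitely generated as an $\vert A \vert$-module. For each $m \in M$, the subspace $K m$ is finite-dimensional (since $M$ is locally finite as a $\bC[K]$-comodule), and $\vert A \vert \cdot (K m)$ is $K$-stable because $k(a \cdot k'm) = (ka)(kk'm)$. Thus $M$ is the directed union of its $K$-equivariant, $\vert A \vert$-finitely generated submodules. Since localization and cokernels commute with filtered colimits, and filtered colimits of surjections are surjective, it is enough to establish the claim for such submodules.

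Now assume $M$ is finitely generated over $\vert A \vert$. Then $M/\fm M$ is a finite-dimensional $\bC$-vector space, so $N \coloneqq M/\fm M \otimes \bC[K]$ is a finitely generated free $\bC[K]$-module. Regarding $N$ as an $\vert A \vert$-module via $\phi_A$ and localizing at $\fm$, Proposition~\ref{prop:phiA}(c) identifies $N_\fm = M/\fm M \otimes_{\bC} A_\fm$, which is a finitely generated free $A_\fm$-module. Consequently the cokernel $C \coloneqq \coker(\phi_M)$ has $C_\fm$ finitely generated over $A_\fm$.

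Next I would show $C_\fm = \fm A_\fm \cdot C_\fm$. Under the identifications $\fm A_\fm = \fn \bC[K]_\fn$ and $\bC[K]_\fn/\fn \bC[K]_\fn = \bC$, the quotient $N_\fm/\fm N_\fm$ is canonically $M/\fm M$, and the composite $M \to N_\fm \to N_\fm/\fm N_\fm$ agrees with the natural projection $M \to M/\fm M$ by Lemma~\ref{lem:phi-modm}. This projection is surjective, so $\phi_M$ is surjective modulo $\fm N_\fm$, giving $\fm A_\fm \cdot C_\fm = C_\fm$. Nakayama's lemma applied to the local ring $A_\fm$ and the finitely generated module $C_\fm$ then forces $C_\fm = 0$, completing the proof.

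The main obstacle is the reduction step: without the $K$-action providing finite-dimensional orbits $Km$, one could not produce $\vert A \vert$-finitely generated submodules exhausting $M$, and Nakayama would be unavailable due to the non-Noetherian nature of $A_\fm$. Everything else is a straightforward application of Lemma~\ref{lem:phi-modm} and Proposition~\ref{prop:phiA}(c).
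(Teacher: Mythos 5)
Your proof is correct and follows essentially the same strategy as the paper: identify $K$-stable $\vert A \vert$-finitely-generated submodules (possible because $K$-orbits are finite-dimensional), use Lemma~\ref{lem:phi-modm} plus Nakayama over the non-Noetherian-but-local ring $A_{\fm}$ on each such piece, and then pass to all of $M$. The only difference is cosmetic: you package the last step as a filtered-colimit reduction, while the paper chases an arbitrary element $x \otimes y$ through the commutative diagram relating $\phi_{N(V)}$ to $\phi_M$.
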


\begin{proof}
Let $V$ be a finite dimensional $B$-subrepresentation of $M$. Let $N(V)$ be the $\vert A \vert$-submodule of $M$ generated by $V$. Since $V$ is finite dimensional, $N(V)$ is finitely generated as a $\vert A \vert$-module; in particular, $N(V)/\fm N(V)$ is finite dimensional. It follows by Nakayama's lemma that $(\phi_{N(V)})_{\fm}$ is surjective, since it is surjective modulo $\fm$ by Lemma~\ref{lem:phi-modm}.

Now, the following diagram commutes:
\begin{displaymath}
\xymatrix@C=2cm{
N(V) \ar[r]^-{\phi_{N(V)}} \ar[d] & N(V)/\fm N(V) \otimes \bC[K] \ar[d] \\
M \ar[r]^-{\phi_M} & M/\fm M \otimes \bC[K] }
\end{displaymath}
The vertical maps here are induced by the inclusion $N(V) \subset M$. Let $x \in M$ and let $y \in \bC[K]_{\fm}$. Let $V$ be the $K$-subrepresentation of $M$ generated by $x$, which is finite dimensional. Then $x \otimes y \in N(V)/\fm N(V) \otimes \bC[B]_{\fm}$ maps to $x \otimes y \in M/\fm M \otimes \bC[K]_{\fm}$ under the above map. Since the former element belongs to the image of $(\phi_{N(V)})_{\fm}$, the latter element belongs to the image of $(\phi_M)_{\fm}$. The result follows.
\end{proof}

\begin{remark}
In the proof of \cite[Proposition~5.9]{periplectic}, we said that $(\phi_M)_{\fm}$ was surjective simply because it was surjective modulo $\fm$ and its target is locally free at $\fm$. This seems inadequate. The reasoning in the above lemma applies to the situation in loc.\ cit., and thus fixes this gap.
\end{remark}

\begin{lemma}
Let $\fg$ be a Lie algebra and let $\fk$ and $\fh$ be subalgebras such that $\fg=\fk+\fh$. Suppose that $R$ is a commutative ring on which $\fg$ acts, and $\fa$ is an ideal of $R$ that is stable by $\fh$. Let $M$ be a $\fg$-equivariant $R$-module and let $N$ be the maximal $\fk$-submodule of $\fa M$. Then $N$ is $\fg$-stable.
\end{lemma}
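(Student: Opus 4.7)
The plan is straightforward: since $N$ is $\fk$-stable by hypothesis and $\fg=\fk+\fh$, it suffices to show $\fh N\subseteq N$. The idea is to introduce the (a priori possibly larger) subspace $N':=N+\fh N$ of $M$, show that $N'\subseteq \fa M$ and that $N'$ is $\fk$-stable, and then invoke the maximality of $N$ to conclude $N'=N$.

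The first step is to check $\fh N\subseteq \fa M$. For $h\in\fh$ and $n\in N$, write $n=\sum a_i m_i$ with $a_i\in\fa$ and $m_i\in M$; by the Leibniz rule for the $\fg$-action on $M$ and the hypothesis that $\fa$ is $\fh$-stable,
\[
hn \;=\; \sum_i (ha_i)\,m_i \;+\; \sum_i a_i\,(hm_i) \;\in\; \fa M,
\]
so indeed $N'\subseteq \fa M$.

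The second step is to verify that $N'$ is $\fk$-stable. Let $k\in\fk$, $h\in\fh$, $n\in N$. Using $k(hn)=[k,h]n+h(kn)$ and decomposing $[k,h]=k'+h'$ with $k'\in\fk$ and $h'\in\fh$ (which is possible since $\fg=\fk+\fh$), we get
\[
k(hn) \;=\; k'n + h'n + h(kn).
\]
Now $k'n\in N$ since $N$ is $\fk$-stable, $kn\in N$ so $h(kn)\in \fh N\subseteq N'$, and $h'n\in\fh N\subseteq N'$. Hence $k(hn)\in N'$, and together with $kN\subseteq N\subseteq N'$ this shows $\fk N'\subseteq N'$.

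Thus $N'$ is a $\fk$-submodule of $\fa M$ containing $N$, so by maximality of $N$ we have $N'=N$, i.e.\ $\fh N\subseteq N$. Combined with $\fk N\subseteq N$, this gives $\fg N\subseteq N$, as desired. There is no real obstacle here; the one subtlety worth flagging is that the argument uses nothing about $\fh$ being a subalgebra except through the decomposition $[k,h]\in\fk+\fh$, which is automatic from $\fg=\fk+\fh$ and the fact that $[k,h]\in\fg$.
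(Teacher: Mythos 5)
Your proof is correct. The key technical input is the same as in the paper's argument---the decomposition $[Y,X]\in\fk+\fh$ for $Y\in\fk$, $X\in\fh$---but the organization is genuinely different and a bit slicker. The paper unpacks the maximality of $N$ into the concrete characterization $N=\{m\in M: Y_1\cdots Y_r m\in\fa M\text{ for all }Y_1,\dots,Y_r\in\fk\}$ and then proves $Y_1\cdots Y_r X m\in\fa M$ by induction on $r$, pushing $X$ past $Y_r$ via the commutator and appealing to the inductive hypothesis for the two shorter words that appear. Your version short-circuits this: you observe that $N':=N+\fh N$ lands in $\fa M$ (the Leibniz-rule step, which is implicitly the $r=0$ base case of the paper's induction), and then the single commutator computation $k(hn)=k'n+h'n+h(kn)$ shows $N'$ is $\fk$-stable, so maximality forces $N'=N$ in one stroke. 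This replaces the explicit induction on word length with an appeal to maximality, which is cleaner and arguably makes the role of the hypothesis $\fg=\fk+\fh$ more transparent. One minor point of phrasing: $N$ is $\fk$-stable by definition (it is defined as the maximal $\fk$-submodule), not ``by hypothesis,'' but this does not affect the argument.
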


\begin{proof}
It is clear that $N$ is $\fk$-stable, so we must show it is $\fh$-stable. Thus let $X \in \fh$ be given; we show that $XN \subset N$. Note that $N$ consists of all $n \in M$ such that $Y_1 \cdots Y_r n \in \fa M$ for all $Y_1, \ldots, Y_r \in \fk$. We must therefore show that $Y_1 \cdots Y_r Xm \in \fa M$ for all $Y_1, \ldots, Y_r \in \fk$ and $m \in N$. Write $[Y_r, X]=X'+Y'$ with $X' \in \fh$ and $Y' \in \fk$. Then
\begin{displaymath}
Y_1 \cdots Y_r X m = Y_1 \cdots Y_{r-1} (X Y_r m + X' m + Y' m).
\end{displaymath}
Since $N$ is $\fk$-stable, $Y_r m$ belongs to $N$, and so $Y_1 \cdots Y_{r-1} X(Y_r m)$ belongs to $\fa M$ by induction on $r$. Similarly, $Y_1 \cdots Y_{r-1} X' m$ belongs to $\fa M$ by induction on $r$. The term $Y_1 \cdots Y_{r-1} Y' m$ belongs to $N$ since $N$ is $\fk$-stable. Thus the result follows.
\end{proof}

\begin{lemma} \label{lem:ker}
Let $M$ be an $A$-module. Then the kernel of $\phi_M$ is $\Sp$-stable, and thus is an $A$-submodule of $M$.
\end{lemma}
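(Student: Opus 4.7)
The plan is to identify $\ker \phi_M$ with the maximal $\fk$-submodule of $\fm M$ appearing in the previous lemma, and then directly apply that lemma to $\fsp = \fk + \fh$.

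First, I unpack what $\ker \phi_M$ is. The comultiplication map $\Delta \colon M \to M \otimes \bC[K]$, which sends $v \in M$ to the function $k \mapsto kv$ on $K$ (with values in the finite-dimensional $K$-subrepresentation generated by $v$), is followed by the quotient map $M \to M/\fm M$ on the first tensor factor. Evaluating at $k \in K$, we see that $\phi_M(v) = 0$ if and only if $kv \in \fm M$ for every $k \in K$. Since the $K$-action on any particular $v$ factors through a product of finitely many $K_i$, which is a connected algebraic group, this $K$-invariant condition is equivalent to $Y_1 \cdots Y_r v \in \fm M$ for all $Y_1, \ldots, Y_r \in \fk$. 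Thus $\ker \phi_M$ is precisely the maximal $\fk$-submodule $N$ of $\fm M$ in the sense of the previous lemma.

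Next I invoke the previous lemma with $\fg = \fsp$, $\fk$ and $\fh$ as defined, $R = A$, and $\fa = \fm$. The decomposition $\fsp = \fk + \fh$ is Proposition~\ref{prop:sp=k+h}. To verify that $\fm$ is $\fh$-stable, recall that $\fm$ is the kernel of the evaluation map $A \to \bC$ at the point $\xi \in \Spec(A)$. For $g \in \Sp$ and $f \in A$, we have $(g \cdot f)(\xi) = f(g^{-1} \xi)$; since $H$ stabilizes $\xi$, the group $H$ preserves this evaluation map, and therefore $\fh$ preserves $\fm$. The previous lemma then yields that $N = \ker \phi_M$ is $\fsp$-stable.

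Finally, to upgrade $\fsp$-stability to $\Sp$-stability, I use that every element of $M$ lies in a finite-dimensional $\Sp_{2n}$-subrepresentation for some $n$ (since $M$ is algebraic), and $\Sp_{2n}$ is a connected algebraic group, so $\fsp_{2n}$-stability coincides with $\Sp_{2n}$-stability on finite-dimensional algebraic representations. That $\ker \phi_M$ is moreover an $\vert A \vert$-submodule is automatic from $\phi_M$ being $A$-linear (as observed after Proposition~\ref{prop:phiA}); combined with $\Sp$-stability, this shows $\ker \phi_M$ is an $A$-submodule of $M$ in $\Mod_A$. The main obstacle is the first step—having the right perspective to recognize $\ker \phi_M$ as the ``maximal $\fk$-submodule of $\fm M$'' so that the previous lemma applies directly; once this translation is in hand, the rest is essentially bookkeeping.
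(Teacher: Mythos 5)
Your proof is correct and takes essentially the same approach as the paper: identify $\ker\phi_M$ with the maximal $\fk$-submodule of $\fm M$, then apply the preceding lemma with $\fg = \fsp$, $\fa = \fm$, via the decomposition of Proposition~\ref{prop:sp=k+h}. You fill in a few details the paper leaves implicit (the connectedness argument to pass between the $K$-condition and the $\fk$-condition, the $\fh$-stability of $\fm$, and the upgrade from $\fsp$-stability to $\Sp$-stability), all of which are accurate.
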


\begin{proof}
  The kernel of $\phi_M$ consists of those elements $m \in M$ such that $km \in \fm M$ for all $k \in K$; thus $\ker(\phi_M)$ is the maximal $\fk$-submodule of $\fm M$. By Proposition~\ref{prop:sp=k+h}, we have $\fsp = \fk \oplus \fh$, and $\fm$ is stable under $\fh$. Hence the result follows from the previous lemma.
\end{proof}

By an ``algebraically $H$-equivariant $A_{\fm}$-module,'' we mean an $A_{\fm}$-module $N$ equipped with a compatible action of $H$ such that for every $x \in N$ there is a unit $u \in A_{\fm}$ such that $ux$ generates an algebraic $H$-representation. The localization of any $A$-module at $\fm$ is an algebraically $H$-equivariant $A_{\fm}$-module. Any submodule or quotient module of an algebraically $H$-equivariant $A_{\fm}$-module (in the category of equivariant modules) is again algebraically equivariant.

\begin{lemma} \label{lem:eqfg}
Suppose that
\begin{displaymath}
0 \to R \to M \to N \to 0
\end{displaymath}
is an exact sequence of algebraically $H$-equivariant $A_{\fm}$-modules such that $M$ is equivariantly finitely generated and $N$ is free as an $\vert A_{\fm} \vert$-module. Then $R$ is also equivariantly finitely generated.
\end{lemma}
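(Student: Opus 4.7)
The plan is to use the $|A_{\fm}|$-freeness of $N$ to split the sequence non-equivariantly, and to transfer an equivariant generating set of $M$ to one of $R$ via the resulting $A_{\fm}$-linear retraction.

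\textbf{Set-up.} Because $M$ is equivariantly finitely generated, I choose a finite-length $H$-subrepresentation $V \subseteq M$ with $A_{\fm} V = M$, and set $V' = \pi(V) \subseteq N$. Then $V'$ is a finite-length $H$-subrep generating $N$ over $A_{\fm}$. Since $V' \twoheadrightarrow N/\fm N$ and $N$ is $|A_{\fm}|$-free, Nakayama's lemma allows me to lift a $\bC$-basis of $N/\fm N$ to elements $\{e_i\}_{i \in I} \subseteq V'$ that form an $|A_{\fm}|$-basis of $N$. Lift each $e_i$ to $\tilde e_i \in V$ and extend $A_{\fm}$-linearly to get a section $\iota \colon N \to M$ of $\pi$. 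Set $\rho = \id_M - \iota\pi \colon M \to R$; this is $A_{\fm}$-linear but not $H$-equivariant.

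\textbf{Generation over $A_{\fm}$.} Applying $\rho$ to $M = A_{\fm} V$ yields $R = A_{\fm}\rho(V)$, so the $\bC$-subspace $\rho(V) \subseteq R$ generates $R$ over $A_{\fm}$. Let $W \subseteq R$ denote the $H$-subrepresentation generated by $\rho(V)$. Then $A_{\fm} W \supseteq A_{\fm}\rho(V) = R$, so $A_{\fm} W = R$, and the proof reduces to showing $W$ has finite length in $\Rep(H)$.

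\textbf{Finite length of $W$.} A direct computation gives
\begin{displaymath}
h\rho(v) - \rho(hv) = \iota(h\pi(v)) - h\iota(\pi(v)) =: \delta(h, \pi(v)),
\end{displaymath}
measuring the failure of $\iota$ to be $H$-equivariant; the cocycle identity $\delta(h_1h_2, v') = \delta(h_1, h_2 v') + h_1\delta(h_2, v')$ implies the $\bC$-subspace $D \subseteq R$ spanned by all $\delta(h, v')$ for $h \in H, v' \in V'$ is $H$-stable, and one deduces $W = \rho(V) + D$. Expanding $v' = \sum_i a_i e_i$ in the basis and using that $h\tilde e_i \in V$ gives the further identity
\begin{displaymath}
\delta(h, v') = \sum_i (h a_i)\, \rho(h\tilde e_i),
\end{displaymath}
which exhibits every element of $D$ as an $A_{\fm}$-linear combination of elements of $\rho(V)$. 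Combined with the inclusion $W \subseteq R \cap U$, where $U$ is the $H$-subrepresentation of $M$ generated by the $\bC$-subspace $V + \iota(V')$, and the short exact sequence $0 \to W \to U \to V' \to 0$ induced by $\pi$, this reduces the question to showing $U$ is finite length.

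\textbf{Main obstacle.} The delicate step is controlling the $H$-closure of $V + \iota(V')$ inside $M$: although $V$ is already finite length and $V'$ is finitely generated as an $H$-representation by some $v'_1, \ldots, v'_l$, the subspace $\iota(V')$ is not $H$-stable and its elements are $A_{\fm}$-linear combinations of the $\tilde e_i$. To bound its $H$-closure I would invoke the algebraic equivariance of $M$ (each $\tilde e_i$, after scaling by a unit of $A_{\fm}$, generates a finite length algebraic $H$-subrepresentation) together with the local finite length property of $\Rep(H)$, concluding that $U$ and hence $W$ is finite length, so that $R$ is equivariantly generated by $W$.
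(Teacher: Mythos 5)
Your set-up is fine: using $|A_{\fm}|$-freeness of $N$ together with Nakayama to produce the section $\iota$, forming $\rho = \id_M - \iota\pi$, and observing $R = A_{\fm}\rho(V)$ is a sensible start, and your cocycle computations for $\delta(h,v')$ are correct up to sign. (The paper's own proof is a one-line reference to \cite[Lemma~5.8]{periplectic}, so a detailed comparison is not possible here; I comment only on what you wrote.)

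However, the last paragraph contains a genuine gap, and I do not think it can be filled as stated. You reduce to showing that $U$, the $H$-subrepresentation generated by $V + \iota(V')$, has finite length, and you propose to handle this by scaling each $\tilde e_i$ by a unit and invoking local finiteness. This misidentifies where the difficulty lies: the vectors $\tilde e_i$ already sit inside the finite-length representation $V$, so there is nothing to control there. The problem is the $A_{\fm}$-coefficients. For $v' = \sum_i a_i e_i \in V'$ one has $\iota(v') = \sum_i a_i \tilde e_i$ with $a_i \in A_{\fm}$, and
\begin{displaymath}
h\,\iota(v') \;=\; \sum_i (h a_i)\,(h\tilde e_i).
\end{displaymath}
While $h\tilde e_i$ remains in $V$, the elements $ha_i$ sweep out the $H$-orbits of the $a_i$ inside $A_{\fm}$, and nothing in the construction bounds those orbits: the algebraic equivariance condition only guarantees that $a_i$ generates a finite-length representation \emph{after} multiplication by a suitable unit, and the unit depends on $a_i$ (hence on $v'$). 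Scaling the $\tilde e_i$ by units, which is what you propose, does not touch the $a_i$ at all. Consequently the asserted finite length of $U$ (equivalently of $H\cdot\iota(V')$) does not follow, and I do not believe it holds in general.

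There is also a subsidiary point worth flagging: because finite-length objects of $\Rep(H)$ can be infinite-dimensional (e.g., $\bV$ itself), $\rho(V)$ is generally an infinite-dimensional $\bC$-subspace of $R$, so one cannot simply pick a $\bC$-basis of $\rho(V)$, rescale each basis vector by a unit using the algebraic equivariance of $R$, and take the $H$-span; the rescalings would be infinitely many. Any correct proof must somehow leverage both the freeness of $N$ and the algebraic equivariance in a way that avoids having to rescale infinitely many elements or control the unrescaled orbit of $\iota(V')$. Your write-up acknowledges this as the ``main obstacle'' but does not supply the needed idea, so the proof is incomplete.

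Finally, a minor remark: the intermediate machinery (the definition of $D$, its $H$-stability, and the identity $W = \rho(V) + D$) is correct but is not actually used in your reduction; once you observe $\rho(V) \subseteq V + \iota(V')$, the inclusion $W \subseteq U$ and the reduction to ``$U$ finite length'' follow immediately. So this part, while not wrong, is superfluous as the argument stands.
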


\begin{proof}
The argument in \cite[Lemma~5.8]{periplectic} applies here.
\end{proof}

\begin{lemma} \label{lem:nak}
Let $M$ be a finitely generated $A$-module such that $M=\fm M$. Then $M_{\fm}=0$.
\end{lemma}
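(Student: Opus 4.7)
My plan is to deduce this equivariant analog of Nakayama's lemma by descending the hypothesis $M = \fm M$ to a classical Nakayama statement via the specialization functor $\Gamma_n = (-)^{G_n}$ of \S \ref{ss:repSp}(\ref{sp:sp}). Direct application of classical Nakayama on $M_\fm$ fails because $M$ is only equivariantly (not classically) finitely generated and $A_\fm$ is not Noetherian.

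First, I would show that for $n$ sufficiently large, $\Gamma_n(M)$ is classically finitely generated over the Noetherian polynomial ring $\Sym(V_n) = \Gamma_n(A)$. Taking a surjection $\pi \colon A \otimes V \to M$ with $V$ a finite length polynomial representation (Proposition~\ref{prop:Aquo}), Corollary~\ref{cor:sp-surj} gives that $\Gamma_n(\pi)$ is surjective for $n \gg 0$, and the strict tensor property of $\Gamma_n$ identifies $\Gamma_n(A \otimes V) = \Sym(V_n) \otimes \Gamma_n(V)$ with $\Gamma_n(V)$ a finite-dimensional $\Sp_{2n}$-representation (for $n \ge \ell(V)$). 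Hence $\Gamma_n(M)$ is generated classically over $\Sym(V_n)$ by the finite-dimensional subspace $\Gamma_n(V)$.

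The central technical step is to show that $M = \fm M$ descends to $\Gamma_n(M) = \fm_n \Gamma_n(M)$ for $n$ large, where $\fm_n = \fm \cap \Sym(V_n)$ is the maximal ideal of $\Sym(V_n)$ corresponding to $\xi\vert_{V_n}$. My approach would be to take $m \in \Gamma_n(M) = M^{G_n}$, write $m = \sum a_i y_i$ with $a_i \in \fm$ and $y_i \in M$, and then apply the Reynolds projection $R_n \colon M \to \Gamma_n(M)$ available because $G_n$ is pro-reductive and acts rationally. Exploiting the ideal decomposition $\fm = \fm_n A + \fm^n A$, where $\fm^n = (x_i - 1, y_i - 1 : i > n) \subset \Sym(V^n)$ is the analog on the tail, the $\fm_n A$-contributions pass into $\fm_n \Gamma_n(M)$ because Reynolds commutes with multiplication by $G_n$-invariants such as elements of $\Sym(V_n)$. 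The $\fm^n A$-contributions are the main obstacle; here one must analyze carefully how Reynolds averaging against elements of $\fm^n$ interacts with the non-$G_n$-invariant module factors, and I expect this to require either an iterative reduction or a subtler isotypic decomposition.

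With this descent in hand, classical Nakayama applied to the Noetherian local ring $\Sym(V_n)_{\fm_n}$ and the classically finitely generated module $\Gamma_n(M)_{\fm_n}$ immediately yields $\Gamma_n(M)_{\fm_n} = 0$. Any $m \in M$ lies in $\Gamma_n(M)$ for some $n$ by algebraicity of the $\Sp$-action, and the vanishing then supplies $f \in \Sym(V_n) \setminus \fm_n \subset A \setminus \fm$ with $f m = 0$; therefore $m$ vanishes in $M_\fm$, and we conclude $M_\fm = 0$.
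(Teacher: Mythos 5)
Your broad strategy---specialize to finite rank via $\Gamma_n = (-)^{G_n}$ and apply classical Nakayama over the Noetherian ring $\Sym(V_n)$---is the same as the paper's, and the preliminary step showing $\Gamma_n(M)$ is classically finitely generated matches. But the descent step you label the ``central technical step,'' namely $\Gamma_n(M) = \fm_n \Gamma_n(M)$ for $n \gg 0$, is precisely where your argument has a genuine gap, and you yourself flag it without closing it. The Reynolds decomposition does not handle the $\fm^n$-contributions: for a term $a = (x_j-1)c$ with $j>n$, the two averages $R_n(x_j c z)$ and $R_n(cz)$ are just elements of $\Gamma_n(M)$, and there is no reason their difference should lie in $\fm_n \Gamma_n(M)$. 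This is not a technicality that ``iterative reduction'' will grind out; it is the entire content of the lemma, and the projection-based route does not supply it.

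The paper never attempts to verify $\Gamma_n(M) = \fm_n \Gamma_n(M)$ by pushing an arbitrary relation through a Reynolds operator. Instead it uses two ingredients absent from your proposal: the embedded infinite symmetric group $\fS \subset \Sp$ and the $\FI$-module finiteness property of \S\ref{ss:repSym}(\ref{sym:large-n}). Fix a finite length generating subrepresentation $V \subset M$; by the $\FI$-property pick $m_1, \ldots, m_r \in V$ that generate $V^{\fS_{>n}} = V^{G_n}$ as an $\fS_n$-module for all $n \gg 0$ (so their $\fS$-translates span $V$). Each $m_i$ has a \emph{fixed finite} expression $m_i = \sum_j a_{i,j} n_{i,j}$ with $a_{i,j} \in \fm$, $n_{i,j} \in M$; enlarging $n$ so that all the finitely many $a_{i,j}$ and $n_{i,j}$ are $G_n$-fixed makes $m_i \in \fm_n M^{G_n}$ \emph{automatically}---no Reynolds operator, no decomposition of $\fm$, just choosing $n$ after the finite data. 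The $\fS_n$-stability of $\fm_n$ and $M^{G_n}$ then propagates membership in $\fm_n M^{G_n}$ to the $\fS_n$-span of the $m_i$, which is all of $V^{G_n}$; since $V^{G_n}$ generates $M^{G_n}$ over $\Sym(V_n)$ (Corollary~\ref{cor:sp-surj}), one gets $M^{G_n} = \fm_n M^{G_n}$, hence $(M^{G_n})_{\fm_n}=0$ by classical Nakayama, hence $s_i m_i = 0$ for some $s_i \in \Sym(V_n) \setminus \fm_n$. Finally the $\fS$-stability of $\fm$ is used once more to spread out: $(gs_i)(gm_i)=0$ for all $g\in\fS$ with $gs_i \notin \fm$, and the $gm_i$ span $V$, so $M_\fm = 0$. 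The moral is that the descent need only hold for finitely many specified generators, where it becomes trivial after enlarging $n$; the $\fS$-action and the $\FI$-finiteness are exactly what permit this reduction, and without them your proposal has no way to close the gap it identifies.
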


\begin{proof}
Let $G_n \subset \Sp$ be as in \S \ref{ss:repSp}(\ref{sp:sp}), let $\fS \subset \Sp$ be the embedding of the infinite symmetric group into $\Sp$ as in \S \ref{ss:repSym}(\ref{sym:sp-res}), and let $\fS_{>n}$ be as in \S \ref{ss:repSym}(\ref{sym:large-n}). Note that $\fS_{>n}=\fS \cap G_n$ and $\fS_n=\fS \cap \Sp_{2n}$.

Let $V$ be a finite length $\Sp$-subrepresentation of $M$ that generates $M$ as an $A$-module. Pick $m_1, \ldots, m_r \in V$ such that the $m_i$ generate $V^{\fS_{>n}}$ as an $\fS_n$-representation for all $n \gg 0$; this is possible by \S \ref{ss:repSym}(\ref{sym:large-n}). Note that the $m$'s then generate $V$ as an $\fS$-representation. Also note that, for $n$ large, the elements $m_1, \ldots, m_r$ belong to $V^{G_n} \subset V^{\fS_{>n}}$, which is an $\fS_n$-subrepresentation, and so $V^{G_n}=V^{\fS_{>n}}$.

For $1 \le i \le r$, write $m_i=\sum_j a_{i,j} n_{i,j}$ with $a_{i,j} \in \fm$ and $n_{i,j} \in M$. Let $n \gg 0$ be sufficiently large so that the $a_{i,j}$ belong to $A'=A^{G_n}$ and the $n_{i,j}$ belong to $M'=M^{G_n}$. Let $V'=V^{G_n}$ and let $\fm'=\fm \cap A'$. By Corollary~\ref{cor:sp-surj}, the map $A' \otimes V' \to M'$ is surjective (after possibly enlarging $n$), and so $V'$ generates $M'$ as an $A'$-module. We have $m_i \in \fm' M'$ for all $i$, and so $g m \in \fm' M'$ for all $g \in \fS_n$ since $\fm'$ is $\fS_n$-stable. Thus $V' \subset \fm' M'$, and so $M'=\fm' M'$. Thus, by the standard version of Nakayama's lemma, we have $M'$ localizes to~0 at $\fm'$. Therefore, for each $1 \le i \le k$, there is some $s_i \in A' \setminus \fm'$ such that $s_i m_i=0$. For any $g \in \fS$, we have $gs_i \in A \setminus \fm$, and so $(gs_i)(gm_i)=0$. It follows that $gm_i$ maps to~0 in $A_{\fm}$. Since the $gm_i$ span $V$, we find that $M_{\fm}=0$, as claimed.
\end{proof}

We now reach the main result:

\begin{proposition}
Let $M$ be an $A$-module. Then $\phi_M$ induces an isomorphism after localizing at $\fm$.
\end{proposition}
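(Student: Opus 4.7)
My plan is to combine the surjectivity of $(\phi_M)_\fm$ (Lemma~\ref{lem:phi-locm}) with a Nakayama-type argument for its kernel, leveraging the fact that the target of $(\phi_M)_\fm$ is a free $A_\fm$-module.

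First I reduce to the case where $M$ is finitely generated. Writing $M$ as the filtered colimit of its finitely generated $A$-submodules $M_\alpha$, and using that both the construction $M \mapsto \phi_M$ and localization at $\fm$ commute with filtered colimits, we have $(\phi_M)_\fm = \varinjlim (\phi_{M_\alpha})_\fm$; since a filtered colimit of isomorphisms is an isomorphism, it suffices to treat the finitely generated case.

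Now assume $M$ is finitely generated, and set $Z = \ker \phi_M$. By Lemma~\ref{lem:ker}, $Z$ is an $A$-submodule of $M$, and by Proposition~\ref{prop:Anoeth} it is finitely generated. Combining Lemma~\ref{lem:phi-locm} with the identification $\bC[K]_\fn \cong A_\fm$ of Proposition~\ref{prop:phiA}(c) yields a short exact sequence
\begin{displaymath}
0 \to Z_\fm \to M_\fm \to (M/\fm M) \otimes_\bC A_\fm \to 0.
\end{displaymath}
The right-hand term is $A_\fm$-free, hence flat, so applying $- \otimes_{A_\fm} A_\fm/\fm A_\fm$ preserves exactness, and Lemma~\ref{lem:phi-modm} identifies the resulting map $M/\fm M \to M/\fm M$ with the identity; hence $Z_\fm/\fm Z_\fm = 0$.

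The final step is to transfer this local vanishing back to the global identity $Z = \fm Z$ needed in order to invoke Lemma~\ref{lem:nak}. Since $Z/\fm Z$ is annihilated by $\fm$, its $A$-action factors through $A/\fm = \bC$, so every $s \in A \setminus \fm$ acts on $Z/\fm Z$ as a non-zero scalar; hence $(Z/\fm Z)_\fm = Z/\fm Z$. Combined with $(Z/\fm Z)_\fm = Z_\fm/\fm Z_\fm = 0$ (by exactness of localization), this forces $Z = \fm Z$. Since $Z$ is finitely generated, Lemma~\ref{lem:nak} gives $Z_\fm = 0$, completing the proof. The main subtlety is precisely this detour through the global identity: because $\vert A \vert$ is not noetherian, $Z$ need not be finitely generated as a plain $\vert A \vert$-module, so classical Nakayama is not directly applicable to $Z_\fm$, and the only available Nakayama-type statement (Lemma~\ref{lem:nak}) requires the global hypothesis $Z = \fm Z$.
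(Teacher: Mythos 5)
Your proof is correct, and it takes a genuinely different (and simpler) route in the middle than the paper does. After the common initial steps — reduction to finitely generated $M$ via filtered colimits, identifying the kernel $Z = \ker\phi_M$ as an $A$-submodule by Lemma~\ref{lem:ker}, and using Lemma~\ref{lem:phi-locm} to get the short exact sequence after localizing — the paper proceeds by invoking Lemma~\ref{lem:eqfg} to establish that $R_\fm$ is \emph{equivariantly} finitely generated over $A_\fm$, and then extracts a finitely generated $A$-submodule $R_0 \subset R$ with $(R_0)_\fm = R_\fm$ in order to have something to feed to Lemma~\ref{lem:nak}. You instead observe directly, via Proposition~\ref{prop:Anoeth}, that $Z$ is already finitely generated as an $A$-module (being a submodule of the finitely generated $M$), which eliminates the detour through Lemma~\ref{lem:eqfg} and the auxiliary module $R_0$ entirely. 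You also spell out the step $(Z/\fm Z)_\fm = Z/\fm Z$ — correct, since $Z/\fm Z$ is killed by $\fm$ and hence a $\bC$-vector space on which every $s \in A\setminus\fm$ acts as a nonzero scalar — which the paper passes over silently when it asserts $R/\fm R = R_0/\fm R_0 = 0$. Your version is tighter and equally valid; the paper's appeal to Lemma~\ref{lem:eqfg} appears to be unnecessary in this particular place, given that Proposition~\ref{prop:Anoeth} is already available.
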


\begin{proof}
The assignment $M \mapsto \phi_M$ commutes with filtered colimits, and so it suffices to treat the case where $M$ is finitely generated. Let $R$ be the kernel of $\phi_M$, which is an $A$-submodule of $M$ by Lemma~\ref{lem:ker}, and let $N=M/\fm M \otimes \bC[K]$. By Lemma~\ref{lem:phi-locm}, the localization of $\phi_M$ at $\fm$ is a surjection. Since localization is exact, we have an exact sequence of algebraically $H$-equivariant $A_{\fm}$-modules
\begin{displaymath}
0 \to R_{\fm} \to M_{\fm} \to N_{\fm} \to 0.
\end{displaymath}
From Lemma~\ref{lem:eqfg}, we conclude that $R_{\fm}$ is equivariantly finitely generated as an $A_{\fm}$-module. Let $V \subset R$ be a finite length algebraic $\Sp$-representation generating $R_{\fm}$ as an $\vert A_{\fm} \vert$-module, and let $R_0$ be the $A$-submodule of $R$ generated by $V$. Note that $R_0$ is finitely generated as an $A$-module and $(R_0)_{\fm}=R_{\fm}$. Now, the mod $\fm$ reduction of the above exact sequence is exact, by the freeness of $N_{\fm}$, and the reduction of $M_{\fm} \to N_{\fm}$ is an isomorphism by Lemma~\ref{lem:phi-modm}. We conclude that $R/\fm R=R_0/\fm R_0=0$. Lemma~\ref{lem:nak} thus shows that $(R_0)_{\fm}=0$ and so $R_{\fm}=0$, and the proposition is proved.
\end{proof}

\section{The generic category} \label{s:gen}

\subsection{Statement of results} \label{ss:gen-1}

Let $\Mod_A^{\tors}$ be the category of torsion $A$-modules. We define the {\bf generic category} $\Mod_A^{\gen}$ to be the Serre quotient category $\Mod_A/\Mod_A^{\tors}$. We write $T \colon \Mod_A \to \Mod_A^{\gen}$ for the localization functor and let $S$ be its right adjoint (the section functor). The goal of this section is to understand the structure of the generic category and the behavior of $T$ and $S$. We achieve this by relating the generic category to $\Rep(H)$.

Let $M$ be an $A$-module. Define $\Phi(M)=M/\fm M$, where $\fm$ is the maximal ideal of $\vert A \vert$ considered in the previous section. Since $\fm$ is $H$-stable, it follows that $\Phi(M)$ carries a representation of $H$. It is easily seen to be algebraic: indeed, we can express $M$ as a quotient of $A \otimes V$, for some polynomial representation $V$, and then $\Phi(M)$ is a quotient of $\Phi(A \otimes V)=V$, and thus algebraic. We have thus defined a functor
\begin{displaymath}
\Phi \colon \Mod_A \to \Rep(H).
\end{displaymath}
Since $\Phi$ is cocontinuous, and the categories involved are Grothendieck, it has a right adjoint $\Psi$. In fact, it is not difficult to show that
\begin{displaymath}
\Psi(V) = \Hom_{\cU(\fh)}(\cU(\fsp), V)^{\rm alg}
\end{displaymath}
where $(-)^{\rm alg}$ denotes the maximal algebraic subrepresentation of an $\fsp$-module. Since we will not need this fact, we do not discuss it further.

The following is the main theorem of this section:

\begin{theorem} \label{thm:gen}
We have the following:
\begin{enumerate}
\item The functor $\Phi$ is exact.
\item The kernel of $\Phi$ is $\Mod_A^{\tors}$.
\item The counit $\Phi \Psi \to \id$ is an isomorphism.
\item The functor $\Phi$ induces an equivalence $\Mod_A^{\gen} \to \Rep(H)$.
\item The unit $V \otimes A \to \Psi(\Phi(V \otimes A))$ is an isomorphism, for any $V \in \Rep^{\pol}(\GL)$.
\end{enumerate}
\end{theorem}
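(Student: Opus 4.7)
The plan is to prove parts (a), (b), (e), (c), (d) in that order. Parts (a) and (b) will follow directly from Theorem~\ref{thm:loc}: that theorem presents $M_{\fm} \cong M/\fm M \otimes_{\bC} A_{\fm}$ as a free $A_{\fm}$-module, so $\Phi(M) = M/\fm M$ is the composition of the exact localization $M \mapsto M_{\fm}$ with reduction against $A_{\fm}$-free modules, giving (a). For (b), a finitely generated torsion module is annihilated by a power of $A_+$, and $A_+ \not\subset \fm$ because $\xi \ne 0$, so such a module vanishes at $\fm$ and hence under $\Phi$ by Theorem~\ref{thm:loc}; general torsion modules are filtered colimits of these, and $\Phi$ preserves colimits. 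Conversely, $\Phi(M) = 0$ implies $M_{\fm} = 0$ by Theorem~\ref{thm:loc}, so each $x \in M$ is killed by some nonzero element of $A \setminus \fm$.

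The heart of the argument is (e). By Yoneda it suffices to show that the natural transformation
\[
\alpha_M \colon \Hom_A(M, V \otimes A) \longrightarrow \Hom_A(M, \Psi V) = \Hom_H(\Phi M, V), \qquad f \mapsto \Phi(f),
\]
is an isomorphism for every $A$-module $M$ (the displayed equality uses the $(\Phi,\Psi)$ adjunction together with $\Phi(V \otimes A) = V$, and $\alpha$ corresponds to post-composition with the unit $V \otimes A \to \Psi V$). By Proposition~\ref{prop:Aquo} combined with the noetherianity of $A$ (Proposition~\ref{prop:Anoeth}), every finitely generated $M$ admits a presentation $A \otimes W' \to A \otimes W \to M \to 0$ with $W, W'$ finite-length polynomial; the five lemma applied to the contravariant left-exact functors $\Hom_A(-, V \otimes A)$ and $\Hom_A(-, \Psi V)$ then reduces the claim to $M = A \otimes \bV_\lambda$, and a filtered-colimit argument handles non-finitely-generated $M$. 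Writing $V = \bigoplus_j \bV_{\mu_j}$ and using that $\bV_\lambda$ and $\Phi(A \otimes \bV_\lambda) = \bV_\lambda$ are noetherian (hence compact) in $\Rep(\Sp)$ and $\Rep(H)$ respectively, we may further assume $V = \bV_\mu$ is irreducible.

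For $M = A \otimes \bV_\lambda$ and $V = \bV_\mu$, the map $\alpha$ becomes
\[
\Hom_{\Sp}(\bV_\lambda, \bV_\mu \otimes A) \longrightarrow \Hom_H(\bV_\lambda, \bV_\mu),
\]
a linear map between finite-dimensional spaces of equal dimension by Proposition~\ref{prop:dimcount}. Injectivity is immediate: if $\Phi(f) = 0$, the image of $f$ has vanishing $\Phi$ by exactness (a), hence is torsion by (b), and since it lies inside the torsion-free module $\bV_\mu \otimes A$, it is zero. Hence $\alpha$ is an isomorphism, and (e) follows.

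Parts (c) and (d) then follow formally. For (c) on polynomial $V$, (e) yields $\Phi \Psi V = V$, and the triangle identity forces the counit $\epsilon_V$ to be an isomorphism. For general $V \in \Rep(H)$, I would embed $V$ into the first two terms of an injective resolution $0 \to V \to I^0 \to I^1$ with $I^0, I^1$ polynomial (valid since the indecomposable injectives of $\Rep(H)$ are the $\bV_\lambda$, by the corollary following Theorem~\ref{thm:Hrep}), apply the left-exact functor $\Phi \Psi$, and compare counits via the five lemma. For (d), parts (a) and (b) give a factorization $\bar\Phi \colon \Mod_A^{\gen} \to \Rep(H)$; essential surjectivity follows from $V = \Phi \Psi V = \bar\Phi(T(\Psi V))$ by (c); fully faithfulness follows because the triangle identity combined with (c) makes $\Phi(\eta_M)$ an isomorphism for every $M$, so by (a) and (b) the kernel and cokernel of the unit $\eta_M \colon M \to \Psi \Phi M$ are torsion, making $T(\eta_M)$ an isomorphism in $\Mod_A^{\gen}$. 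The main obstacle is (e), where Proposition~\ref{prop:dimcount} must be coordinated with the Yoneda, presentation, and compactness reductions before the dimension count can be converted into an actual isomorphism.
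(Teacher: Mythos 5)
Your proof is correct and follows essentially the same path as the paper's: parts (a) and (b) are read off from Theorem~\ref{thm:loc}; part (e) comes down to the dimension count of Proposition~\ref{prop:dimcount} together with the observation that a morphism into $\bV_\mu \otimes A$ killed by $\Phi$ must have torsion (hence zero) image; part (c) is obtained from (e) plus an injective co-presentation in $\Rep(H)$; and part (d) is the standard Gabriel-type consequence. The only cosmetic differences are that you run the Yoneda/presentation reduction explicitly where the paper isolates it as its Lemmas~\ref{lem:gen2}--\ref{lem:gen4} (the content is the same), and you sketch the Gabriel argument by hand where the paper simply cites \cite[Prop.~III.5]{gabriel}.
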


We can use the theorem to transfer our understanding of $\Rep(H)$ to $\Mod_A^{\gen}$:

\begin{corollary} \label{cor:gen}
We have the following:
\begin{enumerate}
\item If $M$ is a finitely generated $A$-module then $T(M)$ has finite length.
\item Every finite length object of $\Mod_A^{\gen}$ has finite injective dimension.
\item The injectives of $\Mod_A^{\gen}$ are exactly the objects $T(V \otimes A)$ with $V \in \Rep^{\pol}(\GL)$.
\item The unit $V \otimes A \to S(T(V \otimes A))$ is an isomorphism, for any $V \in \Rep^{\pol}(\GL)$.
\end{enumerate}
\end{corollary}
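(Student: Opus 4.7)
The plan is to deduce all four statements by transporting known properties of $\Rep(H)$ across the equivalence $\overline{\Phi} \colon \Mod_A^{\gen} \to \Rep(H)$ supplied by Theorem~\ref{thm:gen}(d). Since $\Phi$ kills the torsion subcategory by Theorem~\ref{thm:gen}(b) and therefore factors as $\Phi = \overline{\Phi} \circ T$, taking right adjoints yields a natural isomorphism $\Psi \cong S \circ \overline{\Phi}^{-1}$; equivalently, $ST \cong \Psi\Phi$, with the unit of $(T,S)$ corresponding to the unit of $(\Phi,\Psi)$ under this identification. This compatibility, which uses only that $\overline{\Phi}$ is both a left and right adjoint to its quasi-inverse, is the only formal input beyond Theorem~\ref{thm:gen} that the argument requires.

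For (a), I would use Proposition~\ref{prop:Aquo} to write a finitely generated $M$ as a quotient of $V \otimes A$ with $V$ a finite-length polynomial $\GL$-representation. Exactness of $\Phi$ (Theorem~\ref{thm:gen}(a)) then produces a surjection $V = \Phi(V \otimes A) \twoheadrightarrow \Phi(M)$; since each $\bV_\lambda$ has finite length in $\Rep(H)$ by the corollary after Theorem~\ref{thm:Hrep}, $V$ and hence $\Phi(M)$ is finite length, so $T(M)$ is too. For (b), finite length in $\Mod_A^{\gen}$ transports under $\overline{\Phi}$ to finite length (a fortiori finitely generated) in $\Rep(H)$, and finite injective dimension there is the second corollary after Theorem~\ref{thm:Hrep}; the equivalence carries injective resolutions to injective resolutions, yielding the claim.

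For (c), the first corollary after Theorem~\ref{thm:Hrep} identifies the indecomposable injectives of $\Rep(H)$ as the $\bV_\lambda$; since $\Rep(H)$ is locally noetherian, every injective is a coproduct of indecomposable injectives, which is precisely the data of a polynomial $\GL$-representation by semisimplicity of $\Rep^{\pol}(\GL)$. Under $\overline{\Phi}^{-1}$, such a $V$ pulls back to $T(V \otimes A)$ since $\Phi(V \otimes A) = V$, giving the claimed description. Finally, for (d), the identification $ST \cong \Psi\Phi$ from the first paragraph carries the unit $V \otimes A \to S(T(V \otimes A))$ to the unit $V \otimes A \to \Psi(\Phi(V \otimes A))$, which is an isomorphism by Theorem~\ref{thm:gen}(e). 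No substantial obstacle is anticipated; the entire corollary is a formal transfer across the equivalence of Theorem~\ref{thm:gen}.
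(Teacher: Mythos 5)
The paper omits an explicit proof of this corollary, treating it as an immediate consequence of Theorem~\ref{thm:gen} and the results of \S\ref{s:repH}, and your argument is precisely that consequence unpacked: you transport finite length, finite injective dimension, and the classification of injectives across the equivalence $\ol{\Phi}$, and for (d) you observe that $\Phi = \ol{\Phi}\circ T$ forces $\Psi \cong S\circ \ol{\Phi}^{-1}$, hence $ST \cong \Psi\Phi$ with matching units, so Theorem~\ref{thm:gen}(e) gives the claim. All four parts are handled correctly and by the intended route.
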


Theorem~\ref{thm:gen} is analogous to \cite[Theorem~3.1]{sym2noeth} and \cite[Theorem~6.1]{periplectic}. Our proof of Theorem~\ref{thm:gen} simplifies the proofs in those papers. We believe the method here could be used in those papers as well, and would yield significant simplifications.

\subsection{Proof of the theorem}

We require several lemmas before proving the theorem. We let $F_{\lambda}$ be the $A$-module $\bV_{\lambda} \otimes A$, and we let $\cF$ be the class of $A$-modules that are (possibly infinite) direct sums of $F_{\lambda}$'s.

\begin{lemma} \label{lem:gen1}
Let $f \colon M \to N$ be a morphism of $A$-modules such that $\Phi(f)=0$. Then the localized morphism $f_{\fm} \colon M_{\fm} \to N_{\fm}$ vanishes.
\end{lemma}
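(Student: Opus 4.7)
The plan is to deduce the lemma almost immediately from Theorem~\ref{thm:loc}. Recall that the theorem provides, for every $A$-module $M$, a canonical isomorphism $\alpha_M \colon M_{\fm} \xrightarrow{\sim} M/\fm M \otimes_{\bC} A_{\fm}$ of $\vert A_{\fm} \vert$-modules, and moreover this isomorphism is functorial in $M$. (In fact, $\alpha_M$ is nothing but the localization $(\phi_M)_{\fm}$ of the map constructed in \S\ref{s:loc}.)

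First I would write down the naturality square associated to $f \colon M \to N$:
\begin{displaymath}
\xymatrix{
M_{\fm} \ar[r]^-{\alpha_M}_-{\sim} \ar[d]_{f_{\fm}} & M/\fm M \otimes_{\bC} A_{\fm} \ar[d]^{\Phi(f) \otimes \mathrm{id}} \\
N_{\fm} \ar[r]^-{\alpha_N}_-{\sim} & N/\fm N \otimes_{\bC} A_{\fm}.
}
\end{displaymath}
The square commutes by the functoriality clause of Theorem~\ref{thm:loc}, and the horizontal arrows are isomorphisms. By hypothesis $\Phi(f) = 0$, so the right-hand vertical arrow is zero; chasing around the square then forces $f_{\fm} = 0$.

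There is no real obstacle here beyond verifying that the isomorphism supplied by Theorem~\ref{thm:loc} is indeed natural in $M$ in the sense above; this is built into the construction, since $\alpha_M$ was defined as (the localization of) $\phi_M$, and $\phi_M$ is manifestly functorial because both the comultiplication map $M \to M \otimes \bC[K]$ and the projection $M \to M/\fm M$ are. Thus the argument is essentially a one-line consequence of the freeness theorem of \S\ref{s:loc}.
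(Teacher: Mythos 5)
Your proposal is correct and follows the paper's argument exactly: both invoke the functorial isomorphism $M_{\fm} \cong \Phi(M) \otimes A_{\fm}$ from Theorem~\ref{thm:loc} and chase the resulting naturality square to conclude $f_{\fm}=0$ from $\Phi(f)=0$. The only difference is that you explicitly remark why the isomorphism is natural (tracing it back to the construction of $\phi_M$), which the paper leaves implicit in the word ``functorial'' in Theorem~\ref{thm:loc}.
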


\begin{proof}
By Theorem~\ref{thm:loc}, we have a commutative diagram
\begin{displaymath}
\xymatrix@C=2cm{
M_{\fm} \ar[r] \ar[d]_{f_{\fm}} & \Phi(M) \otimes A_{\fm} \ar[d]^{\Phi(f) \otimes 1} \\
N_{\fm} \ar[r] & \Phi(N) \otimes A_{\fm} }
\end{displaymath}
where the horizontal maps are isomorphisms. The result follows.
\end{proof}

\begin{lemma} \label{lem:gen2}
For $F \in \cF$ and any partition $\lambda$, the map
\begin{equation} \label{eq:gen2}
\Phi \colon \Hom_A(F_{\lambda}, F) \to \Hom_H(\Phi(F_{\lambda}), \Phi(F))
\end{equation}
is an isomorphism.
\end{lemma}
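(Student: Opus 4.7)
The plan is to establish \eqref{eq:gen2} by combining the injectivity of $\Phi$ on Hom (coming from Lemma~\ref{lem:gen1}) with a dimension count from Proposition~\ref{prop:dimcount}, after first reducing to the case of a single summand $F = F_\mu$.

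Write $F = \bigoplus_i F_{\mu_i}$. The source $F_\lambda = \bV_\lambda \otimes A$ is finitely generated over the noetherian algebra $A$, so $\Hom_A(F_\lambda, -)$ commutes with arbitrary direct sums in $\Mod_A$. Similarly, $\Phi(F_\lambda) = \bV_\lambda$ is finite length (indeed, it is the indecomposable injective envelope of $\bS_{[\lambda]}\bW$) in $\Rep(H)$ by the corollary to Theorem~\ref{thm:Hrep}, so $\Hom_H(\bV_\lambda, -)$ commutes with direct sums as well. Since $\Phi$ is cocontinuous, both sides of~\eqref{eq:gen2} split compatibly as $\bigoplus_i$, reducing the lemma to the case $F = F_\mu$.

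Next I would prove that $\Phi$ is injective on $\Hom_A(F_\lambda, F)$. Suppose $f \colon F_\lambda \to F$ satisfies $\Phi(f) = 0$; by Lemma~\ref{lem:gen1}, $f_\fm = 0$. Every $F \in \cF$ is free as a $\vert A \vert$-module (a direct sum of copies of $A$), and $A$ is an integral domain, so the localization map $F \to F_\fm$ is injective. Hence $f = 0$, and $\Phi$ is injective on Hom.

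Finally, for surjectivity in the case $F = F_\mu$, the tensor-hom adjunction identifies the domain of~\eqref{eq:gen2} with $\Hom_\Sp(\bV_\lambda, \bV_\mu \otimes A)$; the codomain is $\Hom_H(\bV_\lambda, \bV_\mu)$; and Proposition~\ref{prop:dimcount} asserts these have equal dimensions. The main technical obstacle here is confirming that both dimensions are \emph{finite}, so that an injective linear map between spaces of equal dimension is automatically an isomorphism. For this, Theorem~\ref{thm:Htca} identifies $\Rep(H)$ with $\Mod_B^\lf$, and then Proposition~\ref{prop:inj-proj} yields
\[
  \dim \Hom_H(\bV_\lambda, \bV_\mu) = \dim \Hom_B(P_B(\bV_\lambda), P_B(\bV_\mu)) = \dim \Hom_\GL(\bV_\lambda, B \otimes \bV_\mu),
\]
which is finite because $\bV_\lambda$ sits in a single polynomial degree $\vert \lambda \vert$ while $B$ is graded, so only finitely many graded pieces of $B$ can pair nontrivially with $\bV_\lambda$. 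Combining injectivity with this dimension count completes the proof.
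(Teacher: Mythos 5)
Your proof follows the paper's argument exactly: reduce to $F = F_\mu$ by commuting with direct sums, deduce injectivity of $\Phi$ on $\Hom$ from Lemma~\ref{lem:gen1} and torsion-freeness of free $\vert A\vert$-modules, then conclude by the dimension count of Proposition~\ref{prop:dimcount}. The one thing you add that the paper leaves implicit is the check that the common dimension is finite (via the grading on $B$), which is indeed needed for ``injective plus equal dimension implies isomorphism'' and is a worthwhile sanity check.
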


\begin{proof}
The functors $\Phi$, $\Hom_A(F_{\lambda}, -)$, and $\Hom_H(\Phi(F_{\lambda}), -)$ all commute with arbitrary direct sums, so it suffices to treat the case where $F=F_{\mu}$ for some $\mu$. If $f \colon F_{\lambda} \to F_{\mu}$ is a morphism such that $\Phi(f)=0$ then $f_{\fm}=0$ by Lemma~\ref{lem:gen1}. Since $F_{\lambda}$ and $F_{\mu}$ inject into their localizations at $\fm$, it follows that $f=0$. Thus the morphism \eqref{eq:gen2} is injective. Since the domain and target of \eqref{eq:gen2} have the same dimension by Proposition~\ref{prop:dimcount}, it is therefore an isomorphism.
\end{proof}

\begin{lemma} \label{lem:gen3}
Let $f \colon M \to N$ be a map of $A$-modules. Suppose that for all partitions $\lambda$ the induced map
\begin{displaymath}
f_* \colon \Hom_A(F_{\lambda}, M) \to \Hom_A(F_{\lambda}, N)
\end{displaymath}
is an isomorphism. Then $f$ is an isomorphism.
\end{lemma}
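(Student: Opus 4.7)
The plan is to prove injectivity and surjectivity of $f$ separately, in both cases by invoking the free--forgetful adjunction $\Hom_A(F_\lambda, X) = \Hom_{\Sp}(\bV_\lambda, X)$. This rewrites the hypothesis as: $\Hom_{\Sp}(\bV_\lambda, f)$ is an isomorphism for every partition $\lambda$. The key input from the background is the generation result \S\ref{ss:repSp}(\ref{sp:poly-quo}): every algebraic $\Sp$-representation admits a surjection from some polynomial $\GL$-representation, i.e., from some $\bigoplus_i \bV_{\lambda_i}^{m_i}$. Since subquotients of algebraic $\Sp$-representations remain algebraic, this applies in particular to $K := \ker(f)$ and to $N$ itself.

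For injectivity, left-exactness of $\Hom_{\Sp}(\bV_\lambda, -)$ combined with the injectivity half of the hypothesis yields $\Hom_{\Sp}(\bV_\lambda, K) = 0$ for every $\lambda$. If $K$ were nonzero, a surjection $\bigoplus_i \bV_{\lambda_i}^{m_i} \twoheadrightarrow K$ supplied by \S\ref{ss:repSp}(\ref{sp:poly-quo}) would restrict to a nonzero $\Sp$-equivariant map $\bV_{\lambda_i} \to K$ for some $i$, contradicting the vanishing. Hence $K = 0$.

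For surjectivity, the surjectivity half of the hypothesis says that every $\Sp$-equivariant map $\bV_\lambda \to N$ factors through $f$, and so has image in $\im(f)$. Fixing a surjection $\pi \colon \bigoplus_i \bV_{\mu_i}^{n_i} \twoheadrightarrow N$ supplied by \S\ref{ss:repSp}(\ref{sp:poly-quo}), each summand-restriction $\bV_{\mu_i} \to N$ lands in $\im(f)$, so $\pi$ itself does; since $\pi$ is surjective, this forces $N = \im(f)$. The main conceptual point lies in this factorization step: once one observes that the Hom hypothesis is equivalent to the statement that $\Sp$-maps out of $\bV_\lambda$ into $N$ factor through $f$, the lemma reduces to the single ingredient \S\ref{ss:repSp}(\ref{sp:poly-quo}) together with the forgetful adjunction, and no substantial technical obstacle remains.
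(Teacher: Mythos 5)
Your proof is correct and follows essentially the same route as the paper's: the paper phrases the argument directly in terms of the $F_\lambda$ generating $\Mod_A$ (Proposition~\ref{prop:Aquo}), while you unwind the free--forgetful adjunction to work in $\Rep(\Sp)$ and invoke \S\ref{ss:repSp}(\ref{sp:poly-quo}), but this is the same underlying fact. Both halves — kill the kernel via left-exactness of $\Hom(F_\lambda,-)$ plus generation, and kill the cokernel via factoring a generating surjection through $f$ — match the paper's argument.
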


\begin{proof}
This simply follows from the fact that the $F_{\lambda}$'s generate $\Mod_A$. Here are some details. Suppose that $g \colon F_{\lambda} \to \ker(f)$ is some map. Then $fg=0$. Since $f_*$ is an isomorphism, it follows that $g=0$. Thus $\Hom_A(F_{\lambda}, \ker(f))=0$ for all $\lambda$. Since $\ker(f)$ is a quotient of a sum of $F_{\lambda}$'s, we see that $\ker(f)=0$. Thus $f$ is injective.

Now let $g \colon F_{\lambda} \to N$ be some morphism. Since $f_*$ is an isomorphism, we can write $g=fg'$ for some morphism $g' \colon F_{\lambda} \to M$. Thus $\im(g) \subset \im(f)$, and so the composition $F_{\lambda} \to N \to \coker(f)$ vanishes. Now, let $F \to N$ be a surjection with $F$ a sum of $F_{\lambda}$'s. Then the induced map $F \to \coker(f)$ is both zero and surjective. It follows that $\coker(f)=0$, and so $f$ is surjective.
\end{proof}

\begin{lemma} \label{lem:gen4}
For $F \in \cF$, the unit $\eta_F \colon F \to \Psi(\Phi(F))$ is an isomorphism.
\end{lemma}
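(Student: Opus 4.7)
The plan is to reduce this to the content of Lemma~\ref{lem:gen2} by combining Lemma~\ref{lem:gen3} with the $(\Phi, \Psi)$-adjunction. Specifically, to show $\eta_F \colon F \to \Psi(\Phi(F))$ is an isomorphism, I will verify the hypothesis of Lemma~\ref{lem:gen3}: for every partition $\lambda$, the map
\begin{displaymath}
(\eta_F)_* \colon \Hom_A(F_{\lambda}, F) \to \Hom_A(F_{\lambda}, \Psi(\Phi(F)))
\end{displaymath}
is an isomorphism.

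The key observation is that the right-hand side, via the adjunction between $\Phi$ and $\Psi$, is naturally identified with $\Hom_H(\Phi(F_{\lambda}), \Phi(F))$, and that, by the standard yoga of units of adjunctions, the composite
\begin{displaymath}
\Hom_A(F_{\lambda}, F) \xrightarrow{(\eta_F)_*} \Hom_A(F_{\lambda}, \Psi(\Phi(F))) \xrightarrow{\sim} \Hom_H(\Phi(F_{\lambda}), \Phi(F))
\end{displaymath}
is precisely the map $\Phi$ appearing in Lemma~\ref{lem:gen2} (send a morphism $f$ to $\Phi(f)$). That lemma tells us this composite is an isomorphism whenever $F \in \cF$, so $(\eta_F)_*$ is an isomorphism for every $\lambda$.

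With the hypothesis of Lemma~\ref{lem:gen3} verified, we conclude $\eta_F$ is itself an isomorphism. The only (minor) obstacle is bookkeeping: one must be careful that the identification $\Hom_A(F_{\lambda}, \Psi(\Phi(F))) \cong \Hom_H(\Phi(F_{\lambda}), \Phi(F))$ really does carry $(\eta_F)_*$ to the functorial map $f \mapsto \Phi(f)$, but this is a formal consequence of the definition of the adjunction isomorphism in terms of $\eta$. Everything else is routine, so no serious obstacle is expected.
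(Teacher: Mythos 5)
Your proof is correct and follows exactly the same route as the paper: verify the hypothesis of Lemma~\ref{lem:gen3} by factoring $(\eta_F)_*$ through the adjunction isomorphism to identify it with the map $\Phi$ of Lemma~\ref{lem:gen2}. The paper records this as a small commutative triangle; your prose version matches it step for step.
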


\begin{proof}
Let $\lambda$ be a partition. We have a commutative diagram
\begin{displaymath}
\xymatrix{
\Hom_A(F_{\lambda}, F) \ar[rr]^{(\eta_F)_*} \ar[rd]_{\Phi} &&
\Hom_A(F_{\lambda}, \Psi(\Phi(F))) \ar[ld]^i \\
& \Hom(\Phi(F_{\lambda}), \Phi(F)) }
\end{displaymath}
where $i$ is the adjunction isomorphism. Since $\Phi$ is an isomorphism by Lemma~\ref{lem:gen2}, it follows that $(\eta_F)_*$ is an isomorphism. Thus $\eta_F$ is an isomorphism by Lemma~\ref{lem:gen3}.
\end{proof}

\begin{lemma} \label{lem:gen5}
Let $I$ be an injective object of $\Rep(H)$. Then the counit $\epsilon_I \colon \Phi(\Psi(I)) \to I$ is an isomorphism.
\end{lemma}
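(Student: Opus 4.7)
The plan is to reduce the claim to the case where $I$ lies in the image of $\Phi$ restricted to $\cF$, and then deduce it from Lemma~\ref{lem:gen4} via a triangle identity for the adjunction $\Phi \dashv \Psi$.

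For the reduction, I would first invoke the corollary to Theorem~\ref{thm:Hrep}, which identifies the indecomposable injectives of $\Rep(H)$ with the $\bV_{\lambda}$. Since $\Rep(H) \cong \Mod_B^{\lf}$ is a locally noetherian Grothendieck category (it is generated by its finite length objects, which are noetherian), the standard decomposition theorem for injectives in such categories lets me write $I \cong \bigoplus_i \bV_{\lambda_i}$ for some (possibly infinite) family of partitions. I would then set $F = \bigoplus_i F_{\lambda_i}$, which lies in $\cF$. The direct computation $\Phi(F_{\lambda}) = (\bV_{\lambda} \otimes A)/\fm(\bV_{\lambda} \otimes A) \cong \bV_{\lambda} \otimes (A/\fm) = \bV_{\lambda}$, together with cocontinuity of $\Phi$ (as a left adjoint), yields a natural isomorphism $\Phi(F) \cong I$.

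To finish, I would apply Lemma~\ref{lem:gen4} to conclude that the unit $\eta_F \colon F \to \Psi(\Phi(F))$ is an isomorphism, and hence so is $\Phi(\eta_F)$. The triangle identity $\epsilon_{\Phi(F)} \circ \Phi(\eta_F) = \id_{\Phi(F)}$ then forces $\epsilon_{\Phi(F)}$ to be an isomorphism, and naturality of $\epsilon$ transports this through the identification $\Phi(F) \cong I$ to give that $\epsilon_I$ is an isomorphism, as desired.

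The main obstacle is really the structural input: that every injective in $\Rep(H)$ decomposes as a direct sum of the $\bV_{\lambda}$'s. Once this decomposition is granted, the rest of the argument is purely formal and amounts to repackaging Lemma~\ref{lem:gen4} via the triangle identities. (As an alternative, one could try to verify directly that both $\Psi$ and $\Phi\Psi$ commute with the relevant direct sums of indecomposable injectives, but invoking the decomposition into indecomposables seems cleaner.)
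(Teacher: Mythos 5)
Your proposal is correct and follows essentially the same route as the paper: the paper compresses your first two paragraphs into the single line ``by the classification of injectives in $\Rep(H)$, we can write $I=\Phi(F)$ for some $F\in\cF$,'' and then, exactly as you do, combines Lemma~\ref{lem:gen4} with the triangle identity $\epsilon_{\Phi(F)}\circ\Phi(\eta_F)=\id$ to conclude.
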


\begin{proof}
By the classification of injectives in $\Rep(H)$, we can write $I=\Phi(F)$ for some $F \in \cF$. Consider the diagram
\begin{displaymath}
\xymatrix@C=2cm{
\Phi(F) \ar[r]^-{\Phi(\eta_F)} & \Phi(\Psi(\Phi(F))) \ar[r]^-{\epsilon_{\Phi(F)}} \ar@{=}[d] & \Phi(F) \ar@{=}[d] \\
& \Phi(\Psi(I)) \ar[r]^-{\epsilon_I} & I }
\end{displaymath}
By basic properties of adjunction, the composition in the first line is the identity. By Lemma~\ref{lem:gen4}, the unit $\eta_F$ is an isomorphism. Thus $\Phi(\eta_F)$ is an isomorphism as well, and so $\epsilon_{\Phi(F)}$ is an isomorphism, and so $\epsilon_I$ is an isomorphism.
\end{proof}

\begin{proof}[Proof of Theorem~\ref{thm:gen}]
(a) Suppose that
\begin{displaymath}
0 \to M \to M' \to M'' \to 0
\end{displaymath}
is an exact sequence of $A$-modules. Since localization is exact, the sequence of $\vert A_{\fm} \vert$-modules
\begin{displaymath}
0 \to M_{\fm} \to M'_{\fm} \to M''_{\fm} \to 0
\end{displaymath}
is also exact. Since $M''_{\fm}$ is free as an $\vert A_{\fm} \vert$-module by Theorem~\ref{thm:loc}, the sequence remains exact after applying $- \otimes_{A_{\fm}} A_{\fm}/\fm$. We thus find that the sequence
\begin{displaymath}
0 \to \Phi(M) \to \Phi(M') \to \Phi(M'') \to 0
\end{displaymath}
is exact, which proves the statement.

(b) Let $M$ be an $A$-module such that $\Phi(M)=M/\fm M$ is zero. By Theorem~\ref{thm:loc}, we find that $M_{\fm}=0$. Thus every element of $M$ is annihilated by a non-zero element of $A$, and so $M$ is a torsion module.

(c) Let $V$ be an algebraic representation of $H$. Choose a co-presentation $0 \to V \to I \to J$ where $I$ and $J$ are injectives of $\Rep(H)$. Consider the diagram
\begin{displaymath}
\xymatrix{
0 \ar[r] & V \ar[r] & I \ar[r] & J \\
0 \ar[r] & \Phi(\Psi(V)) \ar[r] \ar[u]_{\epsilon_V} & \Phi(\Psi(I)) \ar[r] \ar[u]_{\epsilon_I} & \Phi(\Psi(J)) \ar[u]_{\epsilon_J}
}
\end{displaymath}
The bottom row is exact since the functor $\Phi \circ \Psi$ is left exact. The maps $\epsilon_I$ and $\epsilon_J$ are isomorphisms by Lemma~\ref{lem:gen5}, so $\epsilon_V$ is as well.

(d) This is a consequence of (a)--(c) and \cite[Prop.~III.5]{gabriel}.

(e) This was proved in Lemma~\ref{lem:gen4}.
\end{proof}

\section{Structure of $A$-modules} \label{s:Amod}

\subsection{The Artin--Rees lemma and consequences} \label{ss:artin-rees}

Let $I=A_+$ be the ideal of positive degree elements of $A$ and let $\cR=\bigoplus_{n \ge 0} I^n$ be the corresponding Rees algebra (also called the blow-up algebra). Then $\cR$ is naturally an algebra object in $\Rep(\Sp)$; in fact, it is a graded algebra, with $I^n$ having degree $n$.

\begin{proposition}
$\cR$ is noetherian as an (ungraded) algebra in $\Rep(\Sp)$.
\end{proposition}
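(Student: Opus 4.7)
The plan is to exhibit $\cR$ as a quotient of an $\Sp$-equivariant polynomial algebra whose noetherianity can be established by the same method used for $A$ in Proposition~\ref{prop:Anoeth}.

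First I would identify convenient generators of $\cR$ as an $A$-algebra. Since the ideal $I = A_+$ is generated (in the ordinary commutative-algebra sense) by the $\Sp$-subrepresentation $\bV = A_1 \subset A$, we have $I^n = A \cdot \Sym^n(\bV)$, so $\cR$ is generated as an $A$-algebra by $\bV$ placed in Rees-degree one. The inclusion $\bV \hookrightarrow \cR_1 = I$ therefore extends to a surjective $\Sp$-equivariant $A$-algebra homomorphism
\[
\Sym_A(\bV) \;=\; A \otimes_{\bC} \Sym(\bV) \;\twoheadrightarrow\; \cR,
\]
and the source is naturally identified with the $\Sp$-algebra $\Sym(\bV \oplus \bV)$. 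Any $\cR$-module in $\Rep(\Sp)$ is canonically a $\Sym(\bV \oplus \bV)$-module via pullback, with the same lattice of equivariant submodules, so it suffices to prove that $\Sym(\bV \oplus \bV)$ is noetherian in $\Rep(\Sp)$.

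For this I would rerun the argument of Proposition~\ref{prop:Anoeth} verbatim, with $\bV$ replaced by $\bV \oplus \bV$. Under the restriction functor $\Phi \colon \Rep(\Sp) \to \Rep(\fS)$, the representation $\bV$ becomes $\bW^{\oplus 2}$ by \S \ref{ss:repSym}(\ref{sym:sp-res}), so $\Phi(\Sym(\bV \oplus \bV)) \cong \Sym(\bW^{\oplus 4}) = R_4$, which is noetherian by \S \ref{ss:repSym}(\ref{sym:noeth}). Given a finitely generated $\Sym(\bV \oplus \bV)$-module $M$ in $\Rep(\Sp)$, a finite-length $\Sp$-subrepresentation of $M$ that generates it restricts to a finite-length $\fS$-representation, so $\Phi(M)$ is finitely generated over $R_4$. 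Since $\Phi$ is exact and does not change underlying vector spaces, any ascending chain of $\Sp$-equivariant submodules of $M$ stabilizes as soon as its image in $\Rep(\fS)$ does, establishing noetherianity.

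I do not anticipate any genuine obstacle: once one recognizes that $\cR$ is finitely generated over $A$ with generators sitting in the $\Sp$-representation $\bV$, the argument is merely a cosmetic repackaging of Proposition~\ref{prop:Anoeth} together with the observation that quotients of noetherian objects are noetherian. The only mild point to be careful about is that the Proposition is asserted for $\cR$ as an \emph{ungraded} algebra, but this causes no trouble because the surjection $\Sym(\bV \oplus \bV) \twoheadrightarrow \cR$ and the argument above never use a grading on either side.
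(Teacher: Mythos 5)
Your proposal is correct and follows essentially the same route as the paper: realize $\cR$ as a quotient of $\Sym(\bV\oplus\bV)$ via the surjection $A\otimes\bV\to I$, then prove $\Sym(\bV\oplus\bV)$ is noetherian by rerunning the Proposition~\ref{prop:Anoeth} argument, using that its restriction to $\fS$ is $R_4$. The paper's proof is just a more terse version of the same argument.
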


\begin{proof}
We have a surjection $A \otimes \bV \to I$ of $A$-modules. We thus see that $\cR$ is a quotient of $A \otimes \Sym(\bV) = \Sym(\bV \oplus \bV)$ as an algebra. Now, $\Sym(\bV^{\oplus 2})$ is a noetherian algebra in $\Rep(\Sp)$, by the same reasoning used in the proof of Proposition~\ref{prop:Anoeth}; note that the restriction of $\Sym(\bV^{\oplus 2})$ to $\fS$ is the algebra $R_4$ of \S \ref{ss:repSym}(\ref{sym:noeth}). Since $\cR$ is a quotient of the noetherian algebra $\Sym(\bV^{\oplus 2})$, it too is noetherian.
\end{proof}

\begin{proposition}[Artin--Rees lemma] \label{prop:artin-rees}
Let $M \subset N$ be $A$-modules, with $N$ finitely generated. Then there exists an integer $k$ such that $M \cap I^n N = I^{n-k} (M \cap I^k N)$ holds for all $n \ge k$.
\end{proposition}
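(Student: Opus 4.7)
The plan is to mimic the classical proof of Artin--Rees verbatim, using the noetherianity of the Rees algebra $\cR$ that was just established. First, I form the graded $\cR$-module
\[
N^* = \bigoplus_{n \ge 0} I^n N,
\]
with the grading that puts $I^n N$ in degree $n$ and with $\cR$-action via the inclusions $I^m \cdot I^n N \subset I^{m+n} N$. Since $N$ is a finitely generated $A$-module, choose a finite length $\Sp$-subrepresentation $V \subset N$ generating $N$ over $A$; then $V$, viewed in degree $0$, generates $N^*$ as an $\cR$-module, so $N^*$ is finitely generated over $\cR$.

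Now consider the graded $\cR$-submodule
\[
M^* = \bigoplus_{n \ge 0} (M \cap I^n N) \subset N^*,
\]
which is indeed $\cR$-stable because $I^m \cdot (M \cap I^n N) \subset M \cap I^{m+n} N$. By the previous proposition, $\cR$ is noetherian, hence $M^*$ is a finitely generated $\cR$-module. Pick a finite length $\Sp$-subrepresentation $W \subset M^*$ generating it over $\cR$. Because $W$ has finite length and the homogeneous components of $M^*$ are $\Sp$-subrepresentations, $W$ is contained in a finite sum of homogeneous pieces; enlarging if necessary, we may assume $W \subset \bigoplus_{j \le k} (M \cap I^j N)$ for some integer $k$.

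Given $n \ge k$, the degree-$n$ component $M \cap I^n N$ of $M^*$ is then spanned (over $A$) by products $I^{n-j}\cdot (M \cap I^j N)$ with $0 \le j \le k$. Since $I^{n-j} = I^{n-k} \cdot I^{k-j}$ and $I^{k-j}(M \cap I^j N) \subset M \cap I^k N$, we conclude
\[
M \cap I^n N \;\subset\; I^{n-k}(M \cap I^k N),
\]
and the reverse inclusion is immediate from $I^{n-k}(M \cap I^k N) \subset I^{n-k} \cdot I^k N \cap M = I^n N \cap M$. This gives the claimed equality. There is no real obstacle; the only point requiring the equivariant formalism is the remark that an $\Sp$-finite generating set of a graded $\cR$-module may be chosen concentrated in finitely many degrees, which follows at once from the fact that finite length objects of $\Rep(\Sp)$ have only finitely many nonzero graded components in any grading compatible with the $\Sp$-action.
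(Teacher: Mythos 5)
Your proposal is correct and follows the same route as the paper: form the Rees module $\bigoplus_{n \ge 0} I^n N$ and its graded submodule $\bigoplus_{n \ge 0}(M \cap I^n N)$, invoke noetherianity of $\cR$, and extract the bound $k$ from a generating set concentrated in bounded degree. The only difference is that you spell out the final deduction in slightly more detail than the paper, which just says ``this yields the result.''
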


\begin{proof}
Define $\cN=\bigoplus_{n \ge 0} I^n N$. This is naturally a graded $\cR$-module, and is finitely generated. Let $\cM=\bigoplus_{n \ge 0} M \cap I^n N$; this a homogeneous $\cR$-submodule of $\cN$. Since $\cR$ is noetherian, it follows that $\cM$ is finitely generated. There is therefore some $k$ such that $\cM_k$ generates $\cM_n$ for all $n \ge k$. This yields the result.
\end{proof}

\begin{corollary}
Let $N$ be a finitely generated $A$-module. Then there exists an integer $n$ such that $I^n N$ is torsion-free.
\end{corollary}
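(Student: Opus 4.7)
The plan is to use the Artin--Rees lemma together with the earlier result that finitely generated torsion $A$-modules are annihilated by a power of $A_+ = I$.

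First I would let $T \subset N$ be the torsion submodule. Since $A$ is noetherian (Proposition~\ref{prop:Anoeth}) and $N$ is finitely generated, $T$ is finitely generated. By the structure of finitely generated torsion modules established earlier, there is some integer $k$ with $I^k T = 0$.

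Next I would apply the Artin--Rees lemma (Proposition~\ref{prop:artin-rees}) to the inclusion $T \subset N$: there is an integer $k'$ such that
\[
T \cap I^n N \;=\; I^{n-k'}(T \cap I^{k'} N) \qquad \text{for all } n \ge k'.
\]
In particular, for $n \ge k+k'$, we have $T \cap I^n N \subset I^{n-k'} T \subset I^k T = 0$.

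Finally I would observe that the torsion submodule of $I^n N$ is exactly $T \cap I^n N$: any element of $I^n N$ that is torsion in $I^n N$ is also torsion in $N$, hence lies in $T$, and conversely. Since we have shown this intersection vanishes for $n \ge k+k'$, we conclude that $I^n N$ is torsion-free for all sufficiently large $n$. There is no serious obstacle here; the only subtlety is noting that the annihilation-by-$I^k$ property applies to $T$ because $T$ is finitely generated, which uses noetherianity of $A$.
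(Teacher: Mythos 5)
Your proof is correct and takes essentially the same route as the paper: take the torsion submodule $T$, note it is finitely generated by noetherianity and hence annihilated by $I^k$, then apply Artin--Rees to $T \subset N$ to conclude $T \cap I^n N = 0$ for $n \gg 0$. The only difference is that you spell out a couple of small steps (e.g., why $I^{n-k'}(T \cap I^{k'}N) \subset I^{n-k'}T$, and why the torsion submodule of $I^nN$ equals $T \cap I^nN$) that the paper leaves implicit.
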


\begin{proof}
Let $T$ be the torsion submodule of $N$; this is finitely generated by noetherianity. By Proposition~\ref{prop:artin-rees}, there is some $k$ such that $T \cap I^n N \subset I^{n-k} T$ for all $n \ge k$. Taking $n$ such that $I^{n-k} T=0$, we see that $T \cap I^n N=0$, and so $I^n N$ is torsion-free.
\end{proof}

\begin{corollary} \label{cor:inj}
Let $J$ be an injective object in the category $\Mod_A^{\tors}$. Then $J$ is injective in the category $\Mod_A$.
\end{corollary}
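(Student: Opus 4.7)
The plan is to show that every inclusion $M \hookrightarrow N$ in $\Mod_A$ and every morphism $f \colon M \to J$ admit an extension $N \to J$. Since $A$ is noetherian (Proposition~\ref{prop:Anoeth}), the category $\Mod_A$ is locally noetherian, and a standard Zorn's lemma argument reduces the problem to the case in which $N$, and therefore $M$, is finitely generated.

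Assume then that $M \subset N$ are finitely generated, and fix $f \colon M \to J$. Because $J$ is torsion, its finitely generated submodule $f(M)$ is finite length, and hence annihilated by some power $I^n$ of $I = A_+$ by the earlier characterization of finitely generated torsion $A$-modules. Consequently $f$ factors through a morphism $M/I^n M \to J$.

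Next, I would apply the Artin--Rees lemma (Proposition~\ref{prop:artin-rees}) to obtain an integer $k$ with $M \cap I^{n+k} N \subset I^n M$. This inclusion forces $f$ to vanish on $M \cap I^{n+k} N$, and so $f$ descends to a morphism
\[
\bar f \colon M/(M \cap I^{n+k} N) \longrightarrow J.
\]
The source of $\bar f$ embeds into $N/I^{n+k} N$, which is torsion since it is annihilated by $I^{n+k}$. Invoking the hypothesis that $J$ is injective inside $\Mod_A^{\tors}$, I would extend $\bar f$ to some $g \colon N/I^{n+k} N \to J$; composing with the quotient map $N \twoheadrightarrow N/I^{n+k} N$ then yields the desired extension of $f$ to $N$.

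The main obstacle I anticipate is the preliminary reduction to finitely generated $N$, which needs a careful Zorn argument along equivariant $A$-submodules, using noetherianity to guarantee that adjoining one $\Sp$-orbit at a time produces finitely generated steps. Once that is in place, the Artin--Rees trick is exactly what is needed to move the problem into $\Mod_A^{\tors}$, where the hypothesis on $J$ closes the argument immediately.
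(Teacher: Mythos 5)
Your proof is correct and takes essentially the same approach as the paper's: reduce to a map $f\colon M\to J$ with $M\subset N$ finitely generated (via Baer's criterion in a locally noetherian Grothendieck category), then use Artin--Rees to manufacture a torsion quotient of $N$ into which a suitable quotient of $M$ embeds, and extend there. The only cosmetic difference is that the paper passes through $\ker f$ and invokes the corollary ``$I^nN$ is eventually torsion-free,'' whereas you work directly with the annihilator of $f(M)$ and apply the Artin--Rees lemma itself to get $M\cap I^{n+k}N\subset I^nM$; these are interchangeable ways of deploying the same tool.
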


\begin{proof}
Let $N \subset M$ be finitely generated $A$-modules and let $f \colon N \to J$ be a morphism of $A$-modules. Let $K$ be the kernel of $f$; note that $N/K$ injects into $J$, and is thus torsion. Let $n$ be such that $I^n(M/K)$ is torsion free. The torsion submodule of $M/K$ thus injects into $M/(I^nM+K)$; in particular, $N/K$ injects into $M/(I^nM+K)$. Let $\ol{f} \colon N/K \to J$ be the morphism induced by $f$. Since $J$ is injective in $\Mod_A^{\tors}$, we can extend $\ol{f}$ to a morphism $\ol{g} \colon M/(I^nM+K) \to J$. Composing $\ol{g}$ with the quotient map $M \to M/(I^nM+K)$ yields a morphism $g \colon M \to J$ extending $f$. It follows that $J$ satisfies the necessary condition to be injective with respect to morphisms of finitely generated $A$-modules. Since $\Mod_A$ is locally noetherian, it follows from a version of Baer's criterion (see \cite[Proposition~A.14]{increp}) that $J$ is injective.
\end{proof}

\subsection{Saturation and local cohomology}

We now develop a theory of saturation and local cohomology for $A$-modules. We refer to \cite[\S 4]{symu1} for background. We note that the important property (Inj) of loc.\ cit.\ holds by Corollary~\ref{cor:inj}.

We have a left-exact functor $\Gamma \colon \Mod_A \to \Mod_A^\tors$ where $\Gamma(M)$ is the torsion submodule of $M$. Its derived functors $\rR^i \Gamma$ are the local cohomology functors. We also have a left exact functor $\Sigma \colon \Mod_A \to \Mod_A$ given by $\Sigma = S \circ T$ and called saturation. The following is \cite[Proposition 4.2]{symu1}. 

\begin{theorem} \label{thm:tri}
For any $M \in \rD^+(\Mod_A)$, we have a canonical exact triangle
\begin{displaymath}
\rR \Gamma(M) \to M \to \rR \Sigma(M) \to
\end{displaymath}
If $M \in \rD^b_{\fgen}(\Mod_A)$ then $\rR \Gamma(M)$ and $\rR \Sigma(M)$ are also in $\rD^b_{\fgen}(\Mod_A)$; in fact, $\rR \Gamma(M)$ is quasi-isomorphic to a finite length complex of finite length modules and $\rR \Sigma(M)$ is quasi-isomorphic to a finite length complex of modules of the form $V \otimes A$ with $V$ a finite length polynomial representation.
\end{theorem}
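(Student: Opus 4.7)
The plan is to follow the general localization-theoretic recipe of \cite[Proposition~4.2]{symu1}, specialized to our setting via Corollary~\ref{cor:gen} and Corollary~\ref{cor:inj}. The argument has two parts: first, I would construct the canonical triangle for any $M \in \rD^+(\Mod_A)$; then, for $M \in \rD^b_{\fgen}(\Mod_A)$, I would extract the finiteness and representability assertions for $\rR \Gamma(M)$ and $\rR \Sigma(M)$.

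For the triangle, the key step is to produce, for every injective $I \in \Mod_A$, a canonical short exact sequence
\[
0 \to \Gamma(I) \to I \to \Sigma(I) \to 0.
\]
First, $\Gamma(I)$ is injective in $\Mod_A^{\tors}$ because $\Gamma$ is right adjoint to the exact inclusion of that subcategory. By Corollary~\ref{cor:inj}, $\Gamma(I)$ is then injective in $\Mod_A$, so it splits off as a direct summand, and the complement $I'$ is a torsion-free injective. I would next identify $I'$ with $\Sigma(I) = S(T(I))$ using the standard fact that $T$ kills torsion and restricts to a fully faithful embedding of torsion-free injectives into $\Mod_A^{\gen}$, so that the unit $I' \to ST(I')$ is an isomorphism. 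Applying the sequence termwise to a Cartan--Eilenberg injective resolution of $M$ then produces the claimed triangle functorially.

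For the finiteness statements, a standard truncation argument reduces to the case where $M$ is a single finitely generated $A$-module concentrated in degree zero. By Corollary~\ref{cor:gen}(a), $T(M)$ has finite length in $\Mod_A^{\gen}$; by Corollary~\ref{cor:gen}(b), it admits a finite injective resolution $T(M) \to J^\bullet$; and by Corollary~\ref{cor:gen}(c), each $J^i$ has the form $T(V_i \otimes A)$ for some polynomial representation $V_i$. Taking a minimal resolution and invoking Theorem~\ref{thm:gen} together with the classification of indecomposable injectives in $\Rep(H)$ (injective envelopes of simples are the finite length modules $\bV_{\lambda}$), I may arrange each $V_i$ to be a finite length polynomial representation. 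Applying $\rR S$ termwise---using $S(T(V_i \otimes A)) \cong V_i \otimes A$ from Corollary~\ref{cor:gen}(d)---yields
\[
\rR \Sigma(M) = \rR S(T(M)) \simeq (V_\bullet \otimes A),
\]
a finite length complex of the desired form. The triangle then forces $\rR \Gamma(M) \in \rD^b_{\fgen}(\Mod_A)$ as well, using $A$-noetherianity (Proposition~\ref{prop:Anoeth}); and each cohomology $\rR^i \Gamma(M)$ is finitely generated and torsion by construction, hence of finite length by the proposition characterizing finitely generated torsion modules.

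The hard part is the identification $\Sigma(I) \cong I/\Gamma(I)$ on injectives---which rests essentially on Corollary~\ref{cor:inj}---together with the arrangement that the $V_i$ appearing in the injective resolution of $T(M)$ are finite length; this last point is what makes the $\rR \Sigma(M)$ description concrete rather than just abstract boundedness.
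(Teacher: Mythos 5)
Your proposal is correct and follows essentially the same route as the paper: the paper cites \cite[Proposition~4.6]{symu1} for the triangle (whose proof is exactly the injective-splitting argument you reconstruct from Corollary~\ref{cor:inj}), and then uses Corollary~\ref{cor:gen} just as you do to produce a bounded complex of injectives of the form $T(V_i \otimes A)$ with $V_i$ finite length and apply $S$. Your treatment of $\rR\Gamma(M)$ also matches the paper's, which similarly deduces boundedness from the triangle and leaves the passage from finite-length torsion cohomology to a finite length complex of finite length modules as a routine truncation step.
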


\begin{proof}
The existence of the triangle is \cite[Proposition 4.6]{symu1}. Suppose now that $M \in \rD^b_{\fgen}(\Mod_A)$. Then $T(M) \in \rD^b_{\fgen}(\Mod_A^{\gen})$. By Corollary~\ref{cor:gen}, we can therefore find a quasi-isomorphism $T(M) \to I^{\bullet}$ where $I^{\bullet}$ is a bounded complex whose terms have the form $T(V \otimes A)$ where $V$ is a finite length polynomial representation. Since $I^{\bullet}$ is a complex of injectives, we have $\rR \Sigma(M) = \rR S(T(M)) \cong S(I^{\bullet})$, which is a finite length complex whose terms have the form $V \otimes A$ where $V$ is a finite length polynomial representation. We thus see that $\rR \Sigma(M) \in \rD^b_{\fgen}(\Mod_A)$. From the triangle in the statement of the theorem, it now follows that $\rR \Gamma(M)$ belongs to $\rD^b_{\fgen}(\Mod_A)$. Since its cohomology groups are torsion, one can show that it is quasi-isomorphic to a finite length complex of finite length modules.
\end{proof}

\begin{corollary}
Let $M$ be a finitely generated $A$-module. Then we have a $4$-term exact sequence
\begin{displaymath}
0 \to \Gamma(M) \to M \to \Sigma(M) \to \rR^1 \Gamma(M) \to 0
\end{displaymath}
and isomorphisms $\rR^i \Sigma(M) \cong \rR^{i+1} \Gamma(M)$ for $i \ge 1$. The groups $\rR^i \Gamma(M)$ are finite length $A$-modules for all $i$ and vanish for $i \gg 0$.
\end{corollary}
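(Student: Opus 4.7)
The plan is to derive this corollary as a direct consequence of Theorem~\ref{thm:tri} by taking cohomology of the canonical exact triangle. Since $M$ is finitely generated, it lies in $\rD^b_{\fgen}(\Mod_A)$ (concentrated in degree~$0$), so Theorem~\ref{thm:tri} applies and produces the triangle
\begin{displaymath}
\rR\Gamma(M) \to M \to \rR\Sigma(M) \to \rR\Gamma(M)[1].
\end{displaymath}
Taking the associated long exact sequence of cohomology and using $\rH^i(M)=0$ for $i\ne 0$, together with the left-exactness of $\Sigma$ (which gives $\rH^i(\rR\Sigma(M))=0$ for $i<0$) and of $\Gamma$ (which gives $\rH^0(\rR\Gamma(M))=\Gamma(M)$ and $\rH^0(\rR\Sigma(M))=\Sigma(M)$), the low-degree portion collapses to
\begin{displaymath}
0 \to \Gamma(M) \to M \to \Sigma(M) \to \rR^1\Gamma(M) \to 0,
\end{displaymath}
since the next term $\rR^1\Sigma(M)$ is preceded by $\rH^1(M)=0$.

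For the isomorphisms, I would continue reading off the long exact sequence at each higher degree: for $i\ge 1$ the relevant slice reads
\begin{displaymath}
\rH^i(M) \to \rR^i\Sigma(M) \to \rR^{i+1}\Gamma(M) \to \rH^{i+1}(M),
\end{displaymath}
and both outer terms vanish because $M$ is a single module in degree~$0$. This yields the claimed isomorphisms $\rR^i\Sigma(M)\cong \rR^{i+1}\Gamma(M)$ for all $i\ge 1$.

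Finally, to see that the groups $\rR^i\Gamma(M)$ are finite length and vanish for $i\gg 0$, I would invoke the structural statement in Theorem~\ref{thm:tri}: since $\rR\Gamma(M)$ is quasi-isomorphic to a bounded complex whose terms are finite length $A$-modules, its cohomology groups are themselves finite length $A$-modules, and only finitely many of them are nonzero. No step here is really an obstacle; the whole proof is bookkeeping on the long exact sequence, with all the substantive content already baked into Theorem~\ref{thm:tri}.
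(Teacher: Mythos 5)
Your proof is correct and is exactly the argument the paper intends (the corollary is stated without proof immediately after Theorem~\ref{thm:tri}, and your long-exact-sequence bookkeeping plus the structural clause of that theorem is precisely how it is meant to be derived).
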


\begin{corollary} \label{cor:der-gen}
The category $\rD^b_{\fgen}(\Mod_A)$ is generated (as a triangulated category) by the modules $\bV_{\lambda} \otimes A$ and the $A/A_+$-modules $\bV_{\lambda}$.
\end{corollary}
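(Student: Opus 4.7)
The plan is to apply Theorem~\ref{thm:tri} to reduce the statement to handling $\rR\Gamma(M)$ and $\rR\Sigma(M)$ separately. Given $M \in \rD^b_{\fgen}(\Mod_A)$, the exact triangle
\begin{displaymath}
\rR\Gamma(M) \to M \to \rR\Sigma(M) \to
\end{displaymath}
shows that if both $\rR\Gamma(M)$ and $\rR\Sigma(M)$ lie in the triangulated subcategory $\mathcal{T}$ generated by the stated modules, then so does $M$. I would therefore treat the saturation and local cohomology terms one at a time.

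For the saturation term, Theorem~\ref{thm:tri} tells us that $\rR\Sigma(M)$ is quasi-isomorphic to a bounded complex whose terms have the form $V \otimes A$ with $V$ a finite length polynomial representation of $\GL$. Since $\Rep^{\pol}(\GL)$ is semi-simple, such $V$ decomposes as a finite direct sum of $\bV_\lambda$'s, so each term $V \otimes A$ is a finite direct sum of the generators $\bV_\lambda \otimes A$, and thus lies in $\mathcal{T}$. Iterated cones then place $\rR\Sigma(M)$ in $\mathcal{T}$.

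For the local cohomology term, Theorem~\ref{thm:tri} tells us that $\rR\Gamma(M)$ is quasi-isomorphic to a bounded complex of finite length $A$-modules, so it suffices to show that every finite length $A$-module $N$ lies in $\mathcal{T}$. Since such $N$ is annihilated by some power $A_+^n$, the filtration $N \supseteq A_+ N \supseteq \cdots \supseteq A_+^n N = 0$ reduces, by taking cones on the short exact sequences of successive quotients, to the case where $N$ is an $A/A_+$-module of finite length. Such a module is a finite length object of $\Rep(\Sp)$, which by \S \ref{ss:repSp}(\ref{sp:inj}) has finite injective dimension; moreover, the indecomposable injectives of $\Rep(\Sp)$ are the $\bV_\lambda$, so the resolution can be chosen with each term a finite direct sum of $\bV_\lambda$'s. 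Since the inclusion $\Mod_{A/A_+} \hookrightarrow \Mod_A$ is exact, this resolution remains valid in $\Mod_A$, placing $N$ in $\mathcal{T}$.

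I do not anticipate a genuine obstacle here: Theorem~\ref{thm:tri} does essentially all of the heavy lifting. The only point that needs a moment's care is the final step, where one must check that the injective resolution of a finite length $\Sp$-representation can be chosen to consist of finite (rather than infinite) direct sums of $\bV_\lambda$'s. This follows from the fact that each $\bV_\lambda$ itself has finite length in $\Rep(\Sp)$, so that taking injective hulls of the successive finite length cokernels produces only finitely many copies of each $\bV_\lambda$ at each step.
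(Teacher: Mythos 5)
Your proof is correct and is the argument the paper implicitly intends (the corollary is stated without proof, immediately after Theorem~\ref{thm:tri}, which indeed does all the work). You correctly reduce via the triangle $\rR\Gamma(M) \to M \to \rR\Sigma(M) \to$, handle the $\rR\Sigma$ side by semi-simplicity of $\Rep^{\pol}(\GL)$ and iterated cones, and handle the $\rR\Gamma$ side by the $A_+$-adic filtration followed by a bounded injective resolution in $\Rep(\Sp)$ with terms that are finite direct sums of $\bV_\lambda$'s. The last point you flag — that the resolution can be taken with finite direct sums at each step — is genuinely needed and is correctly justified: since each $\bV_\lambda$ has finite length in $\Rep(\Sp)$, the successive cokernels remain finite length, so each injective hull is a finite sum, and \cite[4.3.5]{infrank} bounds the length of the resolution.
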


\subsection{Injective modules}

We have the following structural result for injective $A$-modules.

\begin{proposition} \label{prop:inj-decomp}
Let $I$ be a an injective $A$-module. Then $I$ decomposes as $I' \oplus I''$ where $I'$ is a torsion injective module and $I''$ is a torsion-free injective module.
\end{proposition}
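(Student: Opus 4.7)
The plan is to identify $I'$ as the torsion submodule $\Gamma(I)$, show it is injective, split it off, and verify the complement is torsion-free.

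First I would set $I' = \Gamma(I)$, the torsion submodule of $I$. The key claim is that $I'$ is injective in the Serre subcategory $\Mod_A^{\tors}$. To see this, note that for any torsion module $M$, every morphism $M \to I$ factors through $\Gamma(I) = I'$, since the image is a torsion submodule of $I$. Thus $\Hom_A(-, I')$ agrees with $\Hom_A(-, I)$ on $\Mod_A^{\tors}$, and the latter is exact because $I$ is injective in $\Mod_A$. Hence $I'$ is injective in $\Mod_A^{\tors}$, and Corollary~\ref{cor:inj} upgrades this to injectivity in $\Mod_A$.

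Since $I'$ is an injective $A$-module and sits inside $I$, the inclusion $I' \hookrightarrow I$ splits. This yields a decomposition $I = I' \oplus I''$, and $I''$ is injective as a direct summand of an injective module. It remains to verify that $I''$ is torsion-free. Suppose $x \in I''$ and $a \in A$ is nonzero with $ax = 0$. Then $x$ is a torsion element of $I$, so $x \in \Gamma(I) = I'$, forcing $x \in I' \cap I'' = 0$ (here I use that $A = \Sym(\bV)$ is an integral domain, so the notion of torsion element behaves as expected and is well-defined through the splitting).

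I expect no serious obstacle here; the only delicate point is the passage from injectivity in $\Mod_A^{\tors}$ to injectivity in $\Mod_A$, which is precisely the content of Corollary~\ref{cor:inj} and rests on the Artin--Rees type arguments in \S\ref{ss:artin-rees}. Everything else is formal once that is in hand.
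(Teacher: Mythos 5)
Your proof is correct and follows the same route the paper (and the cited reference) has in mind: take $I' = \Gamma(I)$, observe it absorbs all maps from torsion modules so is injective in $\Mod_A^{\tors}$, apply Corollary~\ref{cor:inj} to upgrade to injectivity in $\Mod_A$, split, and check the complement is torsion-free using that $A$ is a domain. The paper itself only cites \cite[Proposition~4.3]{symu1} for details, so you have in effect supplied the argument that citation encodes, with the key input being exactly property (Inj) as you identify.
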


\begin{proof}
This follows formally from property (Inj), i.e., Corollary~\ref{cor:inj}; see \cite[Proposition~4.3]{symu1} for details.
\end{proof}

The torsion-free injective modules are classified by the following result. We do not have a good understanding of the torsion injectives; see Example~\ref{ex:tors-inj} for one observation.

\begin{proposition} \label{prop:inj-tors-free}
For any $V \in \Rep^{\pol}(\GL)$ the $A$-module $V \otimes A$ is injective, and every torsion-free injective $A$-module is of this form. In particular, the indecomposable torsion-free injective $A$-modules are exactly the modules $\bV_{\lambda} \otimes A$.
\end{proposition}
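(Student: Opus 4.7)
The plan is to leverage the equivalence $\Phi \colon \Mod_A^{\gen} \to \Rep(H)$ of Theorem~\ref{thm:gen} and its consequences in Corollary~\ref{cor:gen}.

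First I would handle the ``easy'' direction, that $V \otimes A$ is injective. By Corollary~\ref{cor:gen}(c), the object $T(V \otimes A)$ is injective in $\Mod_A^{\gen}$. Now $S$ is a right adjoint to the exact functor $T$, and any right adjoint of an exact functor preserves injectives, so $S(T(V \otimes A))$ is injective in $\Mod_A$. But Corollary~\ref{cor:gen}(d) identifies the unit $V \otimes A \to S(T(V \otimes A))$ as an isomorphism, and thus $V \otimes A$ is itself injective.

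For the converse, let $I$ be a torsion-free injective $A$-module. Since $I$ is torsion-free $\Gamma(I) = 0$, and since $I$ is injective $\rR^i \Gamma(I) = 0$ for $i \ge 1$; the triangle of Theorem~\ref{thm:tri} therefore collapses to an isomorphism $I \cong \Sigma(I) = S(T(I))$. The crucial step is next: I would argue that $T(I)$ is injective in $\Mod_A^{\gen}$. Using the standard description of morphisms in the Serre quotient as a filtered colimit and the fact that $I$ is torsion-free and injective (so $\Hom_A(M/M', I) = \Ext^1_A(M/M', I) = 0$ for any torsion quotient $M/M'$), one obtains $\Hom_{\Mod_A^{\gen}}(T(M), T(I)) \cong \Hom_A(M, I)$ for every $M \in \Mod_A$. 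Combined with the left-exactness of $S$ applied to a monomorphism in $\Mod_A^{\gen}$, this promotes the injectivity of $I$ in $\Mod_A$ to injectivity of $T(I)$ in $\Mod_A^{\gen}$. Corollary~\ref{cor:gen}(c) then gives $T(I) \cong T(V \otimes A)$ for some $V \in \Rep^{\pol}(\GL)$, and applying $S$ together with Corollary~\ref{cor:gen}(d) yields $I \cong S(T(I)) \cong S(T(V \otimes A)) \cong V \otimes A$.

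It remains to identify the indecomposable torsion-free injectives. Any $V \in \Rep^{\pol}(\GL)$ decomposes as $V \cong \bigoplus_\lambda \bV_\lambda^{\oplus m_\lambda}$, whence $V \otimes A \cong \bigoplus_\lambda (\bV_\lambda \otimes A)^{\oplus m_\lambda}$, so it suffices to show each $\bV_\lambda \otimes A$ is indecomposable. Lemma~\ref{lem:gen2} applied with $F = \bV_\lambda \otimes A$ provides (as a ring homomorphism, since it is induced by the functor $\Phi$) an isomorphism $\End_A(\bV_\lambda \otimes A) \cong \End_H(\bV_\lambda)$; and since $\bV_\lambda$ is the injective envelope of the simple $H$-module $\bS_{[\lambda]}\bW$ (by the corollary to Theorem~\ref{thm:Hrep}), its endomorphism ring has no nontrivial idempotents. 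Hence $\bV_\lambda \otimes A$ has no nontrivial idempotent endomorphisms and is indecomposable.

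The main obstacle is the step asserting that $T(I)$ is injective in $\Mod_A^{\gen}$, which is the sole place where the torsion-free hypothesis is used in a nontrivial way; every other step is a direct consequence of results already established.
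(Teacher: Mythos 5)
Your proof is correct and follows the same overall strategy as the paper's. For the forward direction (showing $V \otimes A$ is injective) you use Corollary~\ref{cor:gen}(c),(d) together with the fact that $S$, being right adjoint to the exact functor $T$, preserves injectives — this is exactly what the paper does. For the converse, the paper simply cites \cite[Proposition~4.3]{symu1} (the formal consequences of the property that injectives of $\Mod_A^{\tors}$ remain injective in $\Mod_A$, established here in Corollary~\ref{cor:inj}) for the two facts that $T(I)$ is injective in $\Mod_A^{\gen}$ and that $I \cong S(T(I))$. You instead re-derive both: $I \cong \Sigma(I)=S(T(I))$ from the vanishing of $\rR\Gamma(I)$ in the triangle of Theorem~\ref{thm:tri}, and injectivity of $T(I)$ from the identification $\Hom_{\Mod_A^{\gen}}(T(M), T(I)) \cong \Hom_A(M, I)$ combined with left-exactness of $S$. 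Both steps are sound. (A small simplification: once you know $S(T(I)) \cong I$, the adjunction isomorphism $\Hom_{\Mod_A^{\gen}}(T(M), X) \cong \Hom_A(M, S(X))$ gives the Hom identification immediately, without invoking the filtered-colimit description of morphisms in a Serre quotient.) Finally, you spell out the indecomposability of $\bV_\lambda \otimes A$ via the ring isomorphism $\End_A(\bV_\lambda \otimes A) \cong \End_H(\bV_\lambda)$ from Lemma~\ref{lem:gen2} and the locality of endomorphism rings of indecomposable injectives; the paper leaves this implicit, and your argument for it is correct.
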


\begin{proof}
By Corollary~\ref{cor:gen}(c), the object $T(V \otimes A)$ is injective in $\Mod_A^{\gen}$. Since $S$ takes injectives to injectives, it follows that $S(T(V \otimes A))$ is injective in $\Mod_A$. But by Corollary~\ref{cor:gen}(d), this is $V \otimes A$. Thus $V \otimes A$ is injective, and it is clearly torsion-free. If $I$ is a torsion-free injective then $T(I)$ is injective \cite[Proposition~4.3]{symu1}, and thus of the form $T(V \otimes A)$ by Corollary~\ref{cor:gen}(c). Since $I \cong S(T(I))$ \cite[Proposition~4.3]{symu1}, we see that $I \cong V \otimes A$.
\end{proof}

\subsection{The Grothendieck group}

For a locally noetherian abelian category $\cA$, we let $\rK(\cA)$ be the Grothendieck group of the category of finitely generated objects in $\cA$. We put $\rK(A)=\rK(\Mod_A)$ and $\rK(\Sp)=\rK(\Rep(\Sp))$. The tensor product on $\Rep(\Sp)$ gives $\rK(\Sp)$ the structure of a commutative ring. Similarly, the functor $\Rep(\Sp) \times \Mod_A \to \Mod_A$ given by $(V, M) \mapsto V \otimes M$ gives $\rK(A)$ the structure of a $\rK(\Sp)$-module. Let $\Lambda$ be the ring of symmetric functions. Then we have a natural ring isomorphism $\Lambda \to \rK(\Sp)$ taking $s_{\lambda}$ to $\bV_{\lambda}$ (that it is a ring homomorphism is clear since $\bV_\lambda$ are Schur functors applied to $\bV$; that it is an isomorphism follows from the fact that the change of basis between $\bV_\lambda$ and simple objects is upper unitriangular, for example by \S\ref{ss:repSp}\eqref{sp:simples}). We can thus regard $\rK(A)$ as a $\Lambda$-module. The following result gives its structure:

\begin{theorem}
The Grothendieck group $\rK(A)$ is a free module over $\Lambda$ of rank two. The classes $[\bC]$ and $[A]$ form a $\Lambda$-basis, where $\bC$ is regarded as an $A$-module via $\bC=A/A_+$.
\end{theorem}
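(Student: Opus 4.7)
My plan is to exhibit the canonical map $\Lambda^{\oplus 2}\to \rK(A)$, $(f,g)\mapsto f[A]+g[\bC]$, as an isomorphism by constructing an explicit left inverse built from the two ``halves'' of the exact triangle $\rR\Gamma\to\id\to\rR\Sigma$ of Theorem~\ref{thm:tri}.

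Surjectivity is immediate from Corollary~\ref{cor:der-gen}: since $\rD^b_{\fgen}(\Mod_A)$ is generated as a triangulated category by the modules $\bV_\lambda\otimes A$ and the $A/A_+$-modules $\bV_\lambda$, passing to Euler characteristics gives $\rK(A)=\Lambda\cdot[A]+\Lambda\cdot[\bC]$, using $s_\lambda\cdot[A]=[\bV_\lambda\otimes A]$ and $s_\lambda\cdot[\bC]=[\bV_\lambda]$.

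For injectivity, I will define two $\Lambda$-linear maps $\pi_1,\pi_2\colon\rK(A)\to\Lambda$ taking $[A]\mapsto(1,0)$ and $[\bC]\mapsto(0,1)$. For $\pi_1$: the exact localization $T\colon\Mod_A\to\Mod_A^{\gen}$ sends finitely generated modules to finite length objects of $\Mod_A^{\gen}$ by Corollary~\ref{cor:gen}(a); identifying $\Mod_A^{\gen}\cong\Rep(H)$ via Theorem~\ref{thm:gen}, I will show that $\rK(\Rep(H)^{\fin})\cong\Lambda$ as $\Lambda$-modules with $[\bV_\lambda]_H\leftrightarrow s_\lambda$, by observing that the transition matrix from $\{[\bV_\lambda]_H\}$ to the simple basis $\{[\bS_{[\mu]}\bW]\}$ is upper unitriangular: $\bV_\lambda$ is the injective envelope of $\bS_{[\lambda]}\bW$, and by Proposition~\ref{prop:tensor-simples} all its other constituents satisfy $|\mu|<|\lambda|$. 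By construction $\pi_1([A])=1$ (since $\Phi(A)=\bC$) and $\pi_1([\bC])=0$ (since $\bC$ is torsion, hence killed by $T$); $\Lambda$-linearity follows because $V\otimes(-)$ commutes with reduction modulo $\fm$.

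For $\pi_2$ I will use local cohomology: set $\pi_2([M])=\sum_i(-1)^i[\rR^i\Gamma(M)]$, which is well-defined by the long exact sequence of $\rR^*\Gamma$ together with the finiteness statements of Theorem~\ref{thm:tri}. I identify $\rK(\Mod_A^{\fin})\cong\Lambda$ via $[\bV_\lambda]\leftrightarrow s_\lambda$, again by the upper unitriangular transition to $\{[\bS_{[\mu]}\bV]\}$ from \S\ref{ss:repSp}\eqref{sp:simples}. Then $\pi_2([\bC])=1$: since $\bC$ is torsion, $\rR\Sigma(\bC)=0$, so the triangle forces $\rR\Gamma(\bC)=\bC$ in degree $0$. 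And $\pi_2([A])=0$: by Corollary~\ref{cor:gen}(d) the unit $A\to S(T(A))=\rR\Sigma(A)$ is an isomorphism, so $\rR\Gamma(A)=0$. $\Lambda$-linearity of $\pi_2$ follows because $V\otimes(-)$ for a finite-dimensional polynomial $V$ is exact with exact left adjoint $V^*\otimes(-)$, hence preserves injective resolutions and commutes with $\rR\Gamma$. Applying $(\pi_1,\pi_2)$ to $f[A]+g[\bC]$ yields $(f,g)$, so this is a left inverse, completing the proof. The main obstacle I anticipate is the bookkeeping needed to pin down the $\Lambda$-module structures on the two target Grothendieck groups and to verify compatibility of $\pi_1,\pi_2$ with the $\Lambda$-actions; the substantive input in both cases is the same unitriangularity observation, which is why once one half is set up the other is cosmetic.
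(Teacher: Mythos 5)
Your argument is essentially the paper's argument, just repackaged: the paper works with the localization exact sequence $\rK(\Mod_A^{\tors}) \xrightarrow{i} \rK(A) \xrightarrow{\pi} \rK(\Mod_A^{\gen}) \to 0$ and uses the same local-cohomology map $\gamma([M]) = \sum (-1)^i[\rR^i\Gamma(M)]$ to show $\gamma\circ i = \id$ (hence $i$ is split injective), then identifies both ends with $\Lambda$; your $\pi_1,\pi_2$ are precisely the compositions of $\pi$ and $\gamma$ with those identifications, and ``$(\pi_1,\pi_2)$ is a left inverse'' is the same splitting. One small point to tighten: you cite Proposition~\ref{prop:tensor-simples} as directly giving that the non-top constituents of $\bV_\lambda$ (as an $H$-module) have $|\mu|<|\lambda|$, but that proposition only concerns $\bW^{\otimes d}$; you still need the filtration of $\bV^{\otimes n}$ coming from $0\to\bW\to\bV\to\bC\to 0$ (whose graded pieces are $\bW^{\otimes k}$, $k\le n$) together with multiplicity one of $\bS_{[\lambda]}\bW$ in $\bV_\lambda$, which follows from $\bV_\lambda$ being the injective envelope of $\bS_{[\lambda]}\bW$. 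The paper is equally terse here, so this is a shared gap rather than a defect unique to your write-up, but it's worth spelling out if you want the unitriangularity to stand on its own.
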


\begin{proof}
By general properties of Serre quotients, we have a canonical exact sequence
\begin{displaymath}
\xymatrix{
\rK(\Mod_A^{\tors}) \ar[r]^-i & \rK(A) \ar[r]^-{\pi} & \rK(\Mod_A^{\gen}) \ar[r] & 0. }
\end{displaymath}
We also have a map
\begin{displaymath}
\gamma \colon \rK(A) \to \rK(\Mod_A^{\tors}), \qquad
\gamma([M]) = \sum_{i \ge 0} (-1)^i [\rR^i \Gamma(M)],
\end{displaymath}
which is well-defined by Theorem~\ref{thm:tri}. If $M$ is torsion then $T(M)=0$, and so $\rR \Sigma(M)=0$, and so $\rR \Gamma(M) \cong M$ by Theorem~\ref{thm:tri}. Thus $\gamma \circ i = \id$, and so $i$ is an injection. Since every finitely generated torsion module has a finite filtration such that $A_+$ acts by zero on the associated graded, it follows that $\rK(\Mod_A^{\tors})=\rK(\Sp)$ is a free $\Lambda$-module of rank~1, generated by $[\bC]$. By our analysis of $\Mod_A^{\gen} \cong \Rep(H)$, we know that the classes $[T(\bV_{\lambda} \otimes A)]$ form a $\bZ$-basis of $\rK(\Mod_A^{\gen})$, and so $\rK(\Mod_A^{\gen})$ is a free $\Lambda$-module of rank~1 with basis $[T(A)]$. Since $\pi([A])=[T(A)]$, we thus see that $[\bC]$ and $[A]$ form a $\Lambda$-basis for $\rK(A)$.
\end{proof}

\subsection{Torsion $A$-modules}

In \cite{symc1}, we saw that finite length $\GL$-equivariant $\Sym(\bV)$-modules enjoy nice homological properties: every such module has finite injective dimension and a finite length injective envelope. We now observe, by example, that these properties fail for $\Sp$-equivariant $\Sym(\bV$)-modules.

\begin{example} \label{ex:trivial-ext}
    Let $\bC = A/A_+$. We claim that $\ext^{2i}_A(\bC,\bC) = \bC$ for all $i \ge 0$ and is 0 in odd degrees. Consider the Koszul resolution $\bK_\bullet$ of $\bC=A/A_+$ given by $\bK_i = A \otimes \bigwedge^i \bV$. The terms of this complex are not projective. However, for any $V \in \Rep(\Sp)$ and $i>0$, we have
\begin{displaymath}
\Ext^i_A(A \otimes V, \bC) = \Ext^i_{\Sp}(V, \bC) = 0,
\end{displaymath}
where the first identification comes from adjunction and the second from the fact that $\bC$ is injective in $\Rep(\Sp)$ (\S \ref{ss:repSp}(\ref{sp:inj})). It follows that the terms of the Koszul complex are acyclic for the functor $\Ext^{\bullet}_A(-, \bC)$, and so the Koszul resolution can be used to compute these $\Ext$ groups. We find
\begin{displaymath}
\Ext^i_A(\bC, \bC) = \Hom_{\Sp}(\lw^i{\bV}, \bC).
\end{displaymath}
This vanishes if $i$ is odd (this parity argument follows from \S\ref{ss:repSp}(\ref{sp:tca})) and otherwise is 1-dimensional (this can be deduced from the branching rule from $\GL$ to $\Sp$ in \cite[7.9]{infrank}).
\end{example}

\begin{example}
  \addtocounter{equation}{-1}
  \begin{subequations}
    We have just seen that $\Ext^2_A(\bC, \bC)$ is one-dimensional. We now describe an explicit Yoneda 2-extension representing the non-zero class. To begin, we have an exact sequence
\begin{equation} \label{eq:yoneda-1}
0 \to \bV \to A/A_+^2 \to \bC \to 0
\end{equation}
Tensoring this sequence with $\bV$, we obtain an exact sequence
\begin{displaymath}
0 \to \bV^{\otimes 2} \to \bV \otimes A/A_+^2 \to \bV \to 0
\end{displaymath}
The symplectic form $\omega$ gives a map $\bV^{\otimes 2} \to \bC$. We can therefore push-out the above extension along this map to obtain an extension
\begin{displaymath}
0 \to \bC \to E \to \bV \to 0,
\end{displaymath}
where $E$ is a quotient of $\bV \otimes A/A_+^2$. We can now splice this extension with \eqref{eq:yoneda-1} to obtain a 2-extension
\begin{displaymath}
0 \to \bC \to E \to A/A_+^2 \to \bC \to 0.
\end{displaymath}
This represents the non-zero element of $\Ext^2_A(\bC, \bC)$.
\end{subequations}
\end{example}

\begin{example} \label{ex:tors-inj}
Let $v$ be an element of $\bV$. Then $v$ defines a linear functional $\omega(v, -)$ on $\bV$, and thus a derivation $\partial_v$ on $A$ (essentially a partial derivative). For $v,w \in \bV$ the operators $\partial_v$ and $\partial_w$ commute. We can therefore use them to define a new $A$-module structure on $A$, which we denote by $A^*$. Explicitly, if $x=v_1 \cdots v_n$ is a monomial in $A$ and $f \in A^*$, the product $xf$ is defined to be $\partial_{v_1} \cdots \partial_{v_n}(f)$. The $A/A_+$-module $\bC$ is an $A$-submodule of $A^*$, as constants are killed by all partial derivatives. If $f$ is any non-zero element of $A^*$ then we can find $v_1, \ldots, v_n \in \bV$ such that $\partial_{v_1} \cdots \partial_{v_n}(f)$ is a non-zero constant. We thus see that any non-zero $A$-submodule of $A^*$ must contain $\bC$. This shows that $A^*$ is an essential extension of $\bC$. It follows that the injective envelope of $\bC$ must contain $A^*$, and therefore does not have finite length.
\end{example}

\section{Applications to tca's} \label{s:Bnoeth}

\subsection{The noetherian property} \label{ss:noeth-prop}

Let $B=\Sym(\bV \oplus \lw^2{\bV})$, regarded as a tca. The goal of this section is to prove the following theorem:

\begin{theorem} \label{thm:Bnoeth}
The tca $B$ is noetherian.
\end{theorem}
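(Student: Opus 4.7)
The plan is to adapt the strategy of \cite{sym2noeth} (which proved noetherianity of $\Sym(\lw^2{\bV})$), using the noetherianity of $\Sp$-equivariant $A$-modules (Proposition~\ref{prop:Anoeth}) as the main new ingredient to handle the ``generic'' part of $B$-modules. Write $B = A \otimes B_2$ where $A=\Sym(\bV)$ and $B_2=\Sym(\lw^2{\bV})$ are each regarded as tca's, so a $B$-module is a polynomial $\GL$-representation equipped with compatible $A$- and $B_2$-module structures. The basic dichotomy to exploit: the ``$B_2$-torsion-free'' part of a $B$-module $M$ can be captured by an $\Sp$-equivariant $A$-module via the equivalence from \cite{sym2noeth} between $\GL$-equivariant $\Frac(B_2)$-modules with polynomiality and $\Rep(\Sp)$, while the ``$B_2$-torsion'' part is controlled by the known noetherianity of $B_2$.

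First, I would construct an exact functor $F \colon \Mod_B^{\fgen} \to \Mod_A$ (the codomain being $\Sp$-equivariant $A$-modules) by tensoring along $B_2 \to \Frac(B_2)$ and transporting through the cited equivalence. I would show that $F$ carries finitely generated tca $B$-modules to finitely generated $\Sp$-equivariant $A$-modules, and that $F(B)=A$, compatibly with the $A$-actions. Given $N \subset M$ a submodule of a finitely generated $B$-module, Proposition~\ref{prop:Anoeth} applied to $F(N) \subset F(M)$ yields finite generation of $F(N)$ in $\Mod_A$. Lifting these generators back to $N$ produces a finitely generated $B$-submodule $N_0 \subset N$ with $F(N_0)=F(N)$, so that $Q:=N/N_0$ lies in the kernel of $F$, i.e., every element of $Q$ is annihilated by a nonzero element of $B_2$. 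Noetherianity of $N$ thus reduces to showing that every $B_2$-torsion $B$-submodule of the finitely generated $B$-module $M/N_0$ is finitely generated.

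For the torsion reduction, I would filter by the ascending chain $Z_k := \{x \in M/N_0 : (B_2^+)^k x = 0\}$. An Artin--Rees-type argument in the spirit of Proposition~\ref{prop:artin-rees}, applied in the $B_2$-direction using the tca noetherianity of $B_2$ from \cite{sym2noeth}, should show that $Z_k$ stabilizes at some finite $k_0$, and moreover that each graded piece $Z_k/Z_{k-1}$ is a finitely generated module over the quotient tca $B/B_2^+ B \cdot (\text{something}) = A \otimes \Sym^{k-1}(\lw^2{\bV})/(\text{lower})$, i.e., a module over $A$ tensored with a finite length polynomial representation. Noetherianity of the tca $A=\Sym(\bV)$ (classical, e.g.\ via the equivalence with $\FI$-modules) then yields finite generation of each graded piece, and hence of the whole torsion part, completing the proof.

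The main obstacle will be the torsion step: ensuring that the $B_2$-torsion submodule of a finitely generated $B$-module is in fact annihilated by some power of $B_2^+$ (classical Artin--Rees does not apply directly, since we do not yet know $B$ is noetherian), and then showing that the associated graded is controlled by already-noetherian tca's. The first step (construction of $F$ and the lifting argument) is essentially formal given the previously cited equivalences, but one must verify carefully that $F$ is compatible with $A$-structure and preserves finite generation, which requires explicit tracking through the equivalence of \cite{sym2noeth}. Once torsion is controlled, the proof is assembled from the generic noetherianity (Proposition~\ref{prop:Anoeth}) and the known noetherianity of $A$ and $B_2$.
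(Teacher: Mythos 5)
Your high-level plan---split into a generic part handled by $\Mod_A$ and a $B_2$-torsion part---is in the same spirit as the paper's proof, and you correctly single out the torsion reduction as the crux. But the proposed mechanism for that step has a genuine gap. An Artin--Rees lemma for the $B_2^+$-adic filtration on $B$-modules would require the Rees algebra $\bigoplus_{n\ge 0}(B_2^+)^nB$ to be a noetherian algebra object in $\Rep^{\pol}(\GL)$. That Rees algebra is a quotient of $B\otimes\Sym(\lw^2\bV)=\Sym(\bV\oplus\lw^2\bV\oplus\lw^2\bV)$, which is \emph{strictly harder} than $B$ itself (and, unlike the Rees algebra of $A_+$ used in Proposition~\ref{prop:artin-rees}, is not covered by any result cited in the paper or in \cite{sym2noeth}). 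So the argument is circular: you would need a stronger noetherianity theorem to establish the Artin--Rees step. Moreover, even granting that the torsion submodule of $M/N_0$ were killed by a power of $B_2^+$, the graded pieces $Z_k/Z_{k-1}$ are only submodules of subquotients of $M/N_0$ regarded as $A$-modules in $\Rep^\pol(\GL)$, and $M/N_0$ is not finitely generated over the tca $A$ (only over $B$); so the claimed appeal to noetherianity of $A$ does not directly apply. Finally, the lifting step ``$F(N_0)=F(N)$'' needs more care than ``essentially formal'': $F$ is a Serre-quotient functor, not a localization of underlying modules, so one must argue that generators of $F(N)$ can be hit by elements of $N$ and that the sub-$A$-module they generate equals $F(N_0)$; this is true but uses exactness and the description of the kernel, not merely the definitions.

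The paper takes a fundamentally different route for the hard step. Rather than analyzing the $B_2$-torsion of $N$ directly, it introduces the right adjoint $\Psi$ to $\Phi\colon\Mod_B\to\Mod_A$ and the property (FT) ($\Tor^B_i(-,\bC)$ finite length for all $i$). It then shows: (i) finitely generated $B_2$-torsion $B$-modules are noetherian and satisfy (FT), by invoking the classification of $\GL$-stable ideals of $B_2$ from \cite{sym2noeth} so that such modules live over an essentially bounded tca (Lemma~\ref{lem:Bnoeth-2}); (ii) for every finitely generated $A$-module $M$, each $\rR^i\Psi(M)$ satisfies (FT) (Lemma~\ref{lem:Bnoeth-5}). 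This second point is established by a d\'evissage in $\rD^b_{\fgen}(\Mod_A)$ using Corollary~\ref{cor:der-gen}, which in turn depends on the entire generic-category machinery of \S\ref{s:gen} and \S\ref{s:Amod} (the equivalence $\Mod_A^{\gen}\cong\Rep(H)$, injectives of the form $\bV_\lambda\otimes A$, local cohomology finiteness)---not merely Proposition~\ref{prop:Anoeth}. With these in hand, for $N\subset\bV_\lambda\otimes B$ one gets $\Phi(N)$ finitely generated (that is where Proposition~\ref{prop:Anoeth} enters), hence $\Psi(\Phi(N))$ satisfies (FT); the cokernel of $N\hookrightarrow\Psi(\Phi(N))$ is $B_2$-torsion and finitely generated, hence (FT); so $N$ satisfies (FT), which forces $N$ to be finitely generated. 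No Artin--Rees in the $B_2$-direction, and no filtration of the torsion submodule, is ever used. You would need to replace your torsion step with an argument of this kind.
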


Write $B=B_1 \otimes B_2$ where $B_1=\Sym(\bV$) and $B_2=\Sym(\lw^2{\bV})$. We say that a $B_2$-module is {\bf torsion} if every element has non-zero annihilator in $B_2$. We write $\Mod_{B_2}^{\tors}$ for the category of torsion modules, and $\Mod_{B_2}^{\gen}$ for the Serre quotient category $\Mod_{B_2}/\Mod_{B_2}^{\tors}$. The symplectic form $\omega$ on $\bV$ induces an $\Sp$-equivariant ring homomorphism $B_2 \to \bC$. Let $\Phi \colon \Mod_{B_2} \to \Rep(\Sp)$ be the functor $\Phi(M)=M \otimes_{B_2} \bC$. We show in \cite[Theorem~3.1]{sym2noeth} (see also \cite[\S 3.5]{sym2noeth}) that $\Phi$ is exact with kernel $\Mod_{B_2}^{\tors}$, and that the induced functor $\Mod_{B_2}^{\gen} \to \Rep(\Sp)$ is an equivalence. Let $\Psi$ be the right adjoint of $\Phi$.

\begin{lemma}
We have the following:
\begin{enumerate}
\item $\Phi$ induces a functor $\wt{\Phi} \colon \Mod_B \to \Mod_A$.
\item $\Psi$ induces a functor $\wt{\Psi} \colon \Mod_A \to \Mod_B$.
\item The functors $(\wt{\Phi}, \wt{\Psi})$ form an adjoint pair.
\item The derived functor of $\wt{\Psi}$ coincides with that of $\Psi$ on the bounded below derived category, i.e., the $B_2$-module underlying $\rR \wt{\Psi}(M)$ is $\rR \Psi(M)$.
\end{enumerate}
\end{lemma}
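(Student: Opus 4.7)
The strategy is to lift the symmetric monoidal adjunction $(\Phi,\Psi)$ between $\Mod_{B_2}$ and $\Rep(\Sp)$ to module categories over the algebra objects $B = B_1 \otimes B_2$ and $A = B_1$ respectively. For (a), the factorization $B = B_1 \otimes B_2$ identifies a $B$-module with a $B_2$-module equipped with a commuting $B_1$-action. Since $\Phi = (-) \otimes_{B_2} \bC$ affects only the $B_2$-structure, the $B_1 = A$-action descends to $\Phi(M)$, giving $\wt{\Phi}(M) \in \Mod_A$ with the evident functoriality.

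For (b), the plan is to exploit that $\Phi$ is strong symmetric monoidal (the natural map $\Phi(M) \otimes \Phi(N) \to \Phi(M \otimes_{B_2} N)$ is readily an isomorphism), so that its right adjoint $\Psi$ carries a canonical lax symmetric monoidal structure. The $B_2$-analog of Corollary~\ref{cor:gen}, proved in \cite[Theorem~3.1]{sym2noeth}, yields $\Psi(A) = A \otimes B_2 = B$; combining this with the lax monoidal comparison and the action map $A \otimes N \to N$, the composite
\begin{displaymath}
B \otimes_{B_2} \Psi(N) \;=\; \Psi(A) \otimes_{B_2} \Psi(N) \;\to\; \Psi(A \otimes N) \;\to\; \Psi(N)
\end{displaymath}
promotes $\Psi(N)$ to a $B$-module, defining $\wt{\Psi}$.

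For (c), I would restrict the adjunction bijection $\Hom_{B_2}(M, \Psi(N)) \cong \Hom_{\Sp}(\Phi(M), N)$ to the relevant subsets: the key point is that $B_1$-linearity on the left corresponds to $A$-linearity on the right. This is formal from the construction of the $B_1$-action on $\Psi(N)$ in (b), but it is the step that demands the most careful bookkeeping, and thus the one I expect to be the main obstacle; it is ultimately an instance of the general principle that a symmetric monoidal adjunction induces an adjunction on modules over commutative algebra objects.

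For (d), the key input is Proposition~\ref{prop:forget-injective}, which says that the forgetful functor $\Mod_A \to \Rep(\Sp)$ preserves injectives. Given $M \in \rD^+(\Mod_A)$, I would choose an injective resolution $M \to I^\bullet$ in $\Mod_A$; then $\rR \wt{\Psi}(M)$ is computed by $\wt{\Psi}(I^\bullet)$, whose underlying $B_2$-complex is $\Psi(I^\bullet)$ by construction. Because forgetting to $\Rep(\Sp)$ is exact, $I^\bullet$ remains a resolution of $M$ there; because each $I^k$ is injective---hence $\Psi$-acyclic---in $\Rep(\Sp)$, the complex $\Psi(I^\bullet)$ computes $\rR \Psi(M)$, yielding the asserted identification.
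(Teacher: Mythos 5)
Your argument is correct, but it takes a noticeably more abstract route than the paper's.

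The paper reinterprets the adjunction $(\Phi,\Psi)$ inside an auxiliary category $\Mod_K$ with $K=\Frac(B_2)$, under which $\Phi$ becomes $K\otimes_{B_2}(-)$ and $\Psi$ becomes the ``take the maximal polynomial subrepresentation'' functor $(-)^{\pol}$. With these descriptions in hand, parts~(a)--(c) become nearly content-free checks: $\wt{\Phi}(M)=A'\otimes_B M$ is visibly an $A'$-module, $M^{\pol}$ is visibly a $B$-submodule (because $B\otimes M^{\pol}$ is polynomial, hence lands in $M^{\pol}$), and the unit and counit of the ambient adjunction are just the evident inclusion and multiplication maps, which are plainly $B$- and $A'$-linear. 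You instead argue categorically: $\Phi$ is strong symmetric monoidal, so $\Psi$ is lax symmetric monoidal, and one invokes the general principle that a monoidal adjunction lifts to an adjunction between categories of modules over corresponding algebra objects, using the isomorphism of algebras $B\xrightarrow{\ \sim\ }\Psi(\Phi(B))=\Psi(A)$ to identify $\Psi(A)$ with $B$. This is a legitimate (and arguably more conceptual and generalizable) alternative, and your~(a) and the computation $\Psi(A)\cong B$ are correct. The trade-off is that your~(c), which you rightly flag as the bookkeeping-heavy step, is precisely the point where the paper's concrete model makes the verification essentially free; in your setup one must carefully check that the triangle identities of the ambient adjunction remain valid after restricting to the module categories, or invoke the monoidal-adjunction-on-modules theorem with the right hypotheses. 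This is standard but not spelled out in your sketch. Part~(d) coincides with the paper's argument: use Proposition~\ref{prop:forget-injective} to see that an injective resolution in $\Mod_A$ stays an injective (hence $\Psi$-acyclic) resolution in $\Rep(\Sp)$.

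One minor notational point: writing $\Psi(A)=A\otimes B_2$ conflates $A$ (an algebra object of $\Rep(\Sp)$) with $B_1=\Sym(\bV)$ (a polynomial representation of $\GL$). What you mean, and what is correct, is $\Psi(A)\cong B_1\otimes B_2=B$ as $B_2$-algebras, via the unit of the adjunction applied to $B$.
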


\begin{proof}
Let $K=\Frac(B_2)$. By a \emph{$K$-module} we mean a $K$-vector space $V$ equipped with a semilinear action of $\GL$ such that there exists a $\bC$-subspace $W$ of $V$ that forms a polynomial representation of $\GL$ and spans $V$ over $K$. Let $\Mod_K$ be the category of $K$-modules. There is a natural functor $\Phi' \colon \Mod_{B_2} \to \Mod_K$ given by $\Phi'(M) = K \otimes_{B_2} M$. There is also a natural functor $\Psi' \colon \Mod_K \to \Mod_{B_2}$ given by $\Psi'(V)=V^{\pol}$, the maximal polynomial subrepresentation of $V$. In \cite[\S 2.4]{sym2noeth}, we show that $(\Phi', \Psi')$ are an adjoint pair, and that $\Phi'$ induces an equivalence $\Mod_{B_2}^{\gen} \to \Mod_K$. Thus $\Mod_K$ is equivalent to $\Rep(\Sp)$, and under this equivalence $\Phi'$ and $\Psi'$ correspond to $\Phi$ and $\Psi$. We can therefore prove the proposition relative to $\Phi'$ and $\Psi'$. Note $A \in \Rep(\Sp)$ corresponds to the algebra object $A'=K \otimes \Sym(\bV)$ of $\Mod_K$. Thus $\Mod_A$ corresponds to the category of $\Mod_{A'}$ of $A'$-modules in $\Sym(K)$. We note that $B$ is naturally a $\GL$-stable subalgebra of $A'$.

(a) If $M$ is a $B$-module then $A' \otimes_B M$ is an $A'$-module. This construction defines a functor $\wt{\Phi}' \colon \Mod_B \to \Mod_{A'}$.

(b) Let $M$ be a $A'$-module in $\Mod_K$. Then $M$ is a $\vert B \vert$-module (by restricting scalars), and we claim that $M^{\pol}$ is a $\vert B \vert$-submodule of $M$. Indeed, $B \otimes M^{\pol}$ is a polynomial representation of $\GL$, and so its image under the $\GL$-equivariant map $B \otimes M \to M$ must have polynomial image, and therefore be contained in $M^{\pol}$. Thus $\Psi'(M)=M^{\pol}$ is an object of $\Mod_B$, and so $\Psi'$ induces a functor $\wt{\Psi}' \colon \Mod_{A'} \to \Mod_B$.

(c) Let $M$ be an $A'$-module and let $N$ be a $B$-module. To prove the statement, it suffices to show that the unit $N \to \Psi'(\Phi'(N))$ is a map of $B$-modules and the counit $\Phi'(\Psi'(M)) \to M$ is a map of $A$-modules. This is clear from the definitions.

(d) We now switch back to $\Rep(\Sp)$ and $A$-modules. Let $M \in \rD^+(\Mod_A)$. Let $M \to I$ be a quasi-isomorphism with $I$ a bounded below complex of injective $A$-modules. Then $\wt{\Psi}(I)$ computes $\rR \wt{\Psi}(M)$. Since every injective $A$-module is injective as an $\Sp$-module by Proposition~\ref{prop:forget-injective}, it follows that $\Psi(I)$ computes $\rR \Psi(M)$. Since $\Psi(I)$ is the $B_2$-module underlying $\wt{\Psi}(I)$, the result follows.
\end{proof}

In what follows, we just write $\Phi$ and $\Psi$ for the functors on $\Mod_B$ and $\Mod_A$. We say that a $B$-module $M$ satisfies property (FT) if $\Tor^B_i(M, \bC)$ is finite length for all $i$.

\begin{lemma} \label{lem:Bnoeth-7}
Suppose that
\begin{displaymath}
0 \to M_1 \to M_2 \to M_3 \to 0
\end{displaymath}
is an exact sequence of $B$-modules. If two of the modules satisfy (FT) then so does the third.
\end{lemma}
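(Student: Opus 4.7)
The plan is to apply the derived tensor product $- \otimes^{\rL}_B \bC$ to the short exact sequence $0 \to M_1 \to M_2 \to M_3 \to 0$ and read off the conclusion from the resulting long exact sequence of $\Tor$ groups
\[
\cdots \to \Tor^B_{i+1}(M_3,\bC) \to \Tor^B_i(M_1,\bC) \to \Tor^B_i(M_2,\bC) \to \Tor^B_i(M_3,\bC) \to \Tor^B_{i-1}(M_1,\bC) \to \cdots
\]
together with the basic fact that finite length objects in the ambient abelian category (here $\Rep(\Sp)$, since $\Tor^B_i(-,\bC)$ lands there) form a Serre subcategory, i.e., are closed under taking subobjects, quotients, and extensions.

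The key step: fix $i$ and suppose that the two modules $M_j$ satisfying (FT) contribute finite length objects at every homological degree. Then, in each of the three possible cases (the unknown module being $M_1$, $M_2$, or $M_3$), the unknown $\Tor^B_i$-group is trapped between two finite length pieces in a three-term exact sequence: it admits a finite length subobject (the image of the preceding map) with finite length quotient (the image in the succeeding map), hence is itself finite length. This is a completely formal two-out-of-three argument once the long exact sequence is in hand.

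The main (and only) thing to verify is that the long exact sequence is really a sequence in the category of $\Sp$-representations in which ``finite length'' behaves well, so that the Serre property applies. This is immediate: $\Tor^B_i(M,\bC)$ is computed by resolving $M$ by $B$-flat objects of the form $V \otimes B$ (which exist by the analogue of Proposition~\ref{prop:Aquo} for $B$; note these are $B$-flat even though not projective) and tensoring down with $\bC$, so each $\Tor^B_i(M,\bC)$ is naturally an object of $\Rep(\Sp)$, and finite length objects of $\Rep(\Sp)$ form a Serre subcategory by \S\ref{ss:repSp}(\ref{sp:finlen}). I do not expect any real obstacle here; the lemma is essentially a formal consequence of the long exact sequence of $\Tor$.
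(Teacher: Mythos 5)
Your proof is correct and is exactly the paper's approach; the paper's own proof is the one-liner ``This follows immediately from the long exact sequence in $\Tor$,'' and your fleshed-out version (long exact sequence plus the Serre property of finite length objects) is the intended argument. Two small inaccuracies in the surrounding remarks, neither of which affects the argument: since $B$ is a tca, i.e.\ an algebra object in the semi-simple category $\Rep^{\pol}(\GL)$, the modules $V \otimes B$ are in fact \emph{projective} (not merely flat) --- you have confused the $B$-module situation with the $A$-module situation, where $A$ lives in the non-semi-simple $\Rep(\Sp)$ and $V \otimes A$ is only flat; relatedly, $\Tor^B_i(-,\bC)$ naturally lands in $\Rep^{\pol}(\GL)$, not $\Rep(\Sp)$, though finite length objects form a Serre subcategory there as well, so the two-out-of-three argument goes through unchanged.
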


\begin{proof}
This follows immediately from the long exact sequence in $\Tor$.
\end{proof}

\begin{lemma} \label{lem:Bnoeth-2}
Suppose $M$ is a finitely generated $B$-module that is $B_2$-torsion. Then:
\begin{enumerate}
\item $V \otimes M$ is a noetherian $B$-module, for any finite length polynomial representation $V$;
\item $M$ satisfies (FT).
\end{enumerate}
\end{lemma}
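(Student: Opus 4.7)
My strategy for both parts is to construct a finite filtration of $M$ by $B$-submodules whose successive quotients are finitely generated $A$-modules (that is, $B$-modules on which $\ker\phi \subset B_2$ acts trivially, where $\phi\colon B_2 \to \bC$ is the map induced by $\omega$), and then handle each part directly.

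\textbf{The filtration.} I would construct a filtration $0 = M_0 \subset M_1 \subset \cdots \subset M_n = M$ by $B$-submodules such that each quotient $N_i := M_i/M_{i-1}$ is a finitely generated $A$-module. The naive $\ker\phi$-adic filtration is useless here because $\tilde\Phi(M)=0$ forces $M = \ker\phi \cdot M$. Instead, one iteratively takes $M_{i+1}/M_i$ to be the maximal $A$-submodule (the $\phi$-socle) of $M/M_i$. In the $\GL$-equivariant (tca) setting, the noetherianity of the tca $B_2$ combined with the finite generation of $M$ as a $B$-module ensures that the $\phi$-socle is nonzero whenever the ambient module is nonzero, and that the chain of annihilator ideals in $B_2$ stabilizes so that the filtration terminates. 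This structural step is the main technical obstacle.

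\textbf{Part (a).} Given such a filtration, each $V \otimes N_i$ is a finitely generated $A$-module (since $V$ has finite length), hence noetherian as an $A$-module by Proposition~\ref{prop:Anoeth}. As $\ker\phi$ annihilates $V \otimes N_i$, the $A$-submodules and $B$-submodules coincide, so $V \otimes N_i$ is a noetherian $B$-module. Noetherianity is preserved under finite extensions in the exact sequences $0 \to V \otimes M_{i-1} \to V \otimes M_i \to V \otimes N_i \to 0$, so $V \otimes M$ is noetherian.

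\textbf{Part (b).} By the filtration and Lemma~\ref{lem:Bnoeth-7}, it suffices to verify (FT) for a finitely generated $A$-module $N$. The key computation is that $\Tor^B_\bullet(A,\bC)$ is $\bC$ concentrated in degree zero. Writing $A = B \otimes^L_{B_2} \bC_\phi$ (valid by $B_2$-flatness of $B$) gives $A \otimes^L_B \bC \cong \bC_\phi \otimes^L_{B_2} \bC$, and the Koszul resolution of $\bC_\phi$ reduces this to the complex $(\lw^\bullet \lw^2\bV,\, -\iota_\omega)$, which is contractible via the explicit homotopy $\eta \mapsto (e_1 \wedge f_1) \wedge \eta$ (since $\omega(e_1 \wedge f_1)=1$). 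Consequently $\Tor^B_i(N,\bC) \cong \Tor^A_i(N,\bC)$ for any $A$-module $N$. By Corollary~\ref{cor:der-gen} and repeated use of Lemma~\ref{lem:Bnoeth-7} along the defining triangles, it suffices to verify (FT) on the generators $\bV_\lambda \otimes A$ and the finite-length $A$-modules $\bV_\lambda$ (with $A_+$ acting trivially). For the former, $\Tor^A_\bullet(\bV_\lambda \otimes A, \bC) = \bV_\lambda$ concentrated in degree zero. For the latter, the Koszul resolution of $\bC$ over $A$ has zero induced differential (since $A_+$ kills $\bV_\lambda$), yielding $\Tor^A_i(\bV_\lambda, \bC) \cong \bV_\lambda \otimes \lw^i \bV$, a quotient of $\bV_\lambda \otimes \bV^{\otimes i}$, which is finite length as an $\Sp$-representation.
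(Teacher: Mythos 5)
Your plan hinges on constructing a filtration of $M$ by $B$-submodules whose successive quotients are killed by $\ker\phi \subset B_2$ (the "$\phi$-socle"). This step is not merely a technical obstacle — it is impossible. The ideal $\ker\phi$ is only $\Sp$-stable, not $\GL$-stable (since $\omega$ is not a $\GL$-invariant element of $(\lw^2\bV)^*$). If $N$ is any nonzero $B$-module in $\Rep^{\pol}(\GL)$ on which $\ker\phi$ acts by zero, then applying $g\in\GL$ to the relation $(v-\omega(v))n=0$ gives $(gv - \omega(v))(gn)=0$; since $g$ acts bijectively this means $g(\ker\phi)$ also annihilates $N$ for every $g$. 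The $\GL$-stable ideal generated by $\bigcup_g g(\ker\phi)$ strictly contains the maximal ideal $\ker\phi$, hence is all of $B_2$, forcing $N=0$. So the $\phi$-socle of any nonzero $\GL$-equivariant $B$-module is zero, and your filtration never gets off the ground. Your claim that "noetherianity of $B_2$ ... ensures that the $\phi$-socle is nonzero" is the precise point of failure. The same category confusion infects part (b): you invoke $\Tor^B_\bullet(A,\bC)$ and Corollary~\ref{cor:der-gen}, but $A=\Sym(\bV)$ and the generators $\bV_\lambda$, $\bV_\lambda\otimes A$ live in $\Rep(\Sp)$, not in the $\GL$-equivariant category $\Mod_B$ where (FT) is stated.

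The paper's actual argument needs no filtration and stays entirely in the $\GL$-equivariant world. For (a): pick a finitely generated $B_2$-submodule $M'\subset M$ that $B$-generates $M$. By \cite[Corollary~2.3]{sym2noeth}, the torsion module $M'$ is killed by a nonzero $\GL$-\emph{stable} ideal $\fa\subset B_2$; hence so is $M$. Then $M$ is a finitely generated module over $C=B/\fa B=B_1\otimes(B_2/\fa)$, which is essentially bounded (by \cite[Corollary~4.2]{sym2noeth} and Littlewood--Richardson) and hence noetherian by \cite[Proposition~2.4]{sym2noeth}. The same reasoning applies to $V\otimes M$. For (b): compute $\Tor^B_\bullet(M,\bC)$ by the Koszul complex $\lw^\bullet(\bV\oplus\lw^2\bV)\otimes M$; by (a) its terms are noetherian, so the homology groups are finitely generated, and being annihilated by $B_+$ they are finite length. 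Note the different scoping: the paper only needs essential boundedness of a single quotient tca, rather than any d\'evissage of $M$ into "$A$-module layers," which is exactly what the $\Sp$-versus-$\GL$ mismatch forbids.
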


\begin{proof}
Let $M'$ be a finitely generated $B_2$-submodule of $M$ that generates $M$ as a $B$-module. Since $M'$ is a finitely generated torsion $B_2$-module, it is annihilated by a non-zero $\GL$-stable ideal $\fa$ of $B_2$ (see \cite[Corollary~2.3]{sym2noeth}). Since $M'$ generates $M$, it follows that $M$ is also annihilated by $\fa$. We thus see that we can regard $M$ as a module over the tca $C=B/\fa B=B_1 \otimes (B_2/\fa)$. Since $B_1$ and $B_2/\fa$ are essentially bounded in the sense of \cite[\S 2.3]{sym2noeth} (see \cite[Corollary~4.2]{sym2noeth}), so is $C$ by the Littlewood--Richardson rule. Thus $C$ is noetherian by \cite[Proposition~2.4]{sym2noeth}. Since $M$ is a finitely generated module over a noetherian tca, it is noetherian. Since $V \otimes M$ is also finitely generated and $B_2$-torsion, the preceding reasoning shows that it too is noetherian. This proves (a).

Now, $\Tor^B_{\bullet}(M, \bC)$ is computed by the Koszul complex $\lw^{\bullet}(\bV \oplus \lw^2{\bV}) \otimes M$. By (a), the terms of this complex are noetherian. Thus the homology groups are finitely generated $B$-modules. Since they are also annihilated by $B_+$, they have finite length, and (b) follows.
\end{proof}

Let $\cS$ be the class of objects $M$ in $\Mod_A$ such that $(\rR^i \Psi)(M)$  satisfies (FT) for all $i \ge 0$. Note in particular that if $M \in \cS$ then $\Psi(M)$ is finitely generated. Also note that, by general properties of Serre quotients and section functors, $(\rR^i \Psi)(M)$ is $B_2$-torsion for any $M$ and any $i>0$.

\begin{lemma} \label{lem:Bnoeth-3}
Suppose that
\begin{displaymath}
0 \to M_1 \to M_2 \to M_3 \to 0
\end{displaymath}
is an exact sequence in $\Mod_A$. If two of the modules belong to $\cS$ then so does the third.
\end{lemma}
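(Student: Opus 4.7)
The plan is to apply the derived functor $\rR\Psi$ to the short exact sequence, obtaining the long exact sequence
\[
\cdots \to \rR^{i-1}\Psi(M_3) \to \rR^i\Psi(M_1) \xrightarrow{f_i} \rR^i\Psi(M_2) \xrightarrow{g_i} \rR^i\Psi(M_3) \xrightarrow{\partial_i} \rR^{i+1}\Psi(M_1) \to \cdots
\]
in $\Mod_B$, and then to propagate property (FT) through this long exact sequence by iterating Lemma~\ref{lem:Bnoeth-7}. The three cases are formally symmetric, so I would focus on the case $M_1, M_2 \in \cS$, aiming to show $M_3 \in \cS$.

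I would break the long exact sequence at each degree $i$ into the three short exact sequences
\[
0 \to \im \partial_{i-1} \to \rR^i\Psi(M_1) \to \im f_i \to 0, \quad 0 \to \im f_i \to \rR^i\Psi(M_2) \to \im g_i \to 0,
\]
\[
0 \to \im g_i \to \rR^i\Psi(M_3) \to \im \partial_i \to 0,
\]
and induct on $i$ with the claim that $\im f_i$, $\im g_i$, and $\im \partial_i$ each satisfy (FT). At the inductive step, the first sequence combined with the hypothesis (FT) for $\rR^i\Psi(M_1)$ and the inductive hypothesis (FT) for $\im \partial_{i-1}$ yields (FT) for $\im f_i$ via Lemma~\ref{lem:Bnoeth-7}; the second likewise yields (FT) for $\im g_i$; and the third then reduces the goal (FT) for $\rR^i\Psi(M_3)$ to (FT) for $\im \partial_i$.

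The main obstacle is extracting (FT) for $\im \partial_i$: this sits as a subobject of $\rR^{i+1}\Psi(M_1)$ in the shifted-up version of the first short exact sequence, and Lemma~\ref{lem:Bnoeth-7} alone cannot yield (FT) for a subobject from (FT) for the ambient module. To break the resulting circularity, I would exploit the observation recorded immediately before the lemma that $\rR^i\Psi$ lands in $B_2$-torsion modules for $i > 0$; every image appearing in the induction at positive degree is then $B_2$-torsion, and Lemma~\ref{lem:Bnoeth-2}(b) makes (FT) automatic for $B_2$-torsion finitely generated modules. The positive-degree induction therefore reduces to propagating finite generation through the short exact sequences, which is controlled by the noetherianity of the bounded-tca quotients $B/\fa B$ supplied by Lemma~\ref{lem:Bnoeth-2}(a). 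The base case $i = 0$ requires separate handling since $\Psi(M_j)$ need not be $B_2$-torsion; there one argues directly, using $\im \partial_{-1} = 0$ together with the explicit description of $\Psi$ as a maximal polynomial subrepresentation.
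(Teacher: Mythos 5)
Your proposal is essentially the paper's own argument: apply $\rR\Psi$ to get the long exact sequence, exploit the fact that $\rR^i\Psi$ lands in $B_2$-torsion modules for $i>0$, use Lemma~\ref{lem:Bnoeth-2}(a) to get noetherianity (hence finite generation of subquotients), use Lemma~\ref{lem:Bnoeth-2}(b) to get (FT) for free on finitely generated $B_2$-torsion modules, and propagate via Lemma~\ref{lem:Bnoeth-7}. The paper organizes the degree-zero piece as the single four-term exact sequence $0 \to \Psi(M_1) \to \Psi(M_2) \to \Psi(M_3) \to N \to 0$ with $N$ the image of $\Psi(M_3)$ in $\rR^1\Psi(M_1)$, and handles $i \ge 1$ via the three-term sequence $\rR^i\Psi(M_2) \to \rR^i\Psi(M_3) \to \rR^{i+1}\Psi(M_1)$; you instead slice the long exact sequence into three short sequences per degree, but the mechanism is the same.

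One small inaccuracy: you assert that the base case $i=0$ ``requires separate handling'' and invokes ``the explicit description of $\Psi$ as a maximal polynomial subrepresentation.'' Neither is true. The only term at level $0$ whose (FT) is not immediate from Lemma~\ref{lem:Bnoeth-7} is $\im\partial_0$, and that lives inside $\rR^1\Psi(M_1)$, which \emph{is} $B_2$-torsion (since $1>0$) and finitely generated (as $M_1 \in \cS$ gives it (FT), hence finite generation by Nakayama), so it is noetherian by Lemma~\ref{lem:Bnoeth-2}(a); thus $\im\partial_0$ is a finitely generated $B_2$-torsion module and satisfies (FT) by Lemma~\ref{lem:Bnoeth-2}(b). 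In other words the degree-$0$ step runs on exactly the same fuel as the positive-degree steps, and the ``explicit description of $\Psi$'' is a red herring that plays no role. This does not affect the correctness of your overall argument, but you should delete that appeal.
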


\begin{proof}
Suppose that $M_1$ and $M_2$ belong to $\cS$. Consider the exact sequence
\begin{displaymath}
0 \to \Psi(M_1) \to \Psi(M_2) \to \Psi(M_3) \to N \to 0
\end{displaymath}
where $N$ is the image of $\Psi(M_3)$ in $\rR^1 \Psi(M_1)$. Since $\rR^1 \Psi(M_1)$ is $B_2$-torsion and finitely generated (as $M_1 \in \cS$), it is noetherian by Lemma~\ref{lem:Bnoeth-2}(a). Thus $N$ is finitely generated. Since $N$ is also $B_2$-torsion, as it is a submodule of $\rR^1 \Psi(M_1)$, it satisfies (FT) by Lemma~\ref{lem:Bnoeth-2}(b). From the above exact sequence and Lemma~\ref{lem:Bnoeth-7}, it follows that $\Psi(M_3)$ satisfies (FT). Now let $i \ge 1$, and consider the exact sequence
\begin{displaymath}
\rR^i \Psi(M_2) \to \rR^i \Psi(M_3) \to \rR^{i+1} \Psi(M_1).
\end{displaymath}
The outside groups are finitely generated by assumption and $B_2$-torsion; thus they are noetherian by Lemma~\ref{lem:Bnoeth-2}(a). It follows that $\rR^i \Psi(M_3)$ is finitely generated. Since it is also $B_2$-torsion, it satisfies (FT) by Lemma~\ref{lem:Bnoeth-2}(b). Thus $M_3$ belongs to $\cS$. The other cases are similar, and left to the reader.
\end{proof}

\begin{lemma} \label{lem:Bnoeth-6}
Let $M$ be a $B$-module that is projective as a $B_2$-module. Then the natural map $M \to \Psi(\Phi(M))$ is an isomorphism of $B$-modules, and $\rR^i \Psi(\Phi(M))=0$ for $i>0$.
\end{lemma}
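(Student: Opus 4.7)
Plan: The approach is to reduce the claim to the case $M = B_2 \otimes V$ for $V$ a polynomial representation of $\GL$, and then exploit the fact---proved in \cite{sym2noeth}---that such free modules are injective in $\Mod_{B_2}$.

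First, I would apply part (d) of the preceding lemma to move the question from $\Mod_B$ down to $\Mod_{B_2}$. The underlying $B_2$-module of the unit $M \to \wt{\Psi}(\wt{\Phi}(M))$ is the unit $M \to \Psi(\Phi(M))$ of the adjunction between $\Mod_{B_2}$ and $\Rep(\Sp)$, and $\rR^i \wt{\Psi}(\wt{\Phi}(M))$ has underlying $B_2$-module $\rR^i \Psi(\Phi(M))$. Since a map of $B$-modules is an isomorphism iff the underlying $B_2$-module map is, it suffices to check both assertions at the $B_2$-module level.

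Second, since $M$ is projective in $\Mod_{B_2}$, it is a direct summand of some $B_2 \otimes V$ with $V$ a polynomial representation of $\GL$. All the relevant functors are additive and commute with direct sums (the locally noetherian property of $\Mod_{B_2}$ ensures injectives are closed under arbitrary direct sums, so $\rR \Psi$ commutes with them), so it suffices to treat $M = B_2 \otimes V$. Here $\Phi(M) = (B_2 \otimes V) \otimes_{B_2} \bC = V$, and the task reduces to showing $B_2 \otimes V \cong \Psi(V)$ together with $\rR^i \Psi(V) = 0$ for $i > 0$.

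Third, I would invoke the $B_2$-analog of Proposition~\ref{prop:inj-tors-free}, established in \cite{sym2noeth}, which says that $B_2 \otimes V$ is injective in $\Mod_{B_2}$. Combined with the obvious fact that $B_2 \otimes V$ is torsion-free, this yields $\rR \Gamma(B_2 \otimes V) = 0$, where $\Gamma$ denotes the torsion functor on $\Mod_{B_2}$. The $B_2$-analog of Theorem~\ref{thm:tri} provides a canonical exact triangle $\rR \Gamma(M) \to M \to \rR \Sigma(M) \to$ with $\Sigma = \Psi \Phi$; since $\Phi$ is exact, $\rR \Sigma = \rR \Psi \circ \Phi$. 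Applied to $M = B_2 \otimes V$, the vanishing $\rR \Gamma(M) = 0$ forces $B_2 \otimes V \cong \rR \Psi(V)$ concentrated in degree zero, delivering both conclusions simultaneously.

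The main obstacle is having the two $B_2$-level inputs---injectivity of $B_2 \otimes V$ in $\Mod_{B_2}$ and the local cohomology triangle for $\Mod_{B_2}$---but both are direct parallels of what we have proved for $A$ in this paper and are available from the analysis of $B_2$ in \cite{sym2noeth}. Everything else is a formal consequence of adjunction and additivity.
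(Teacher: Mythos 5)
Your proof is correct, but it takes a more roundabout route than the paper's for the vanishing claim. The paper's argument is: (i) for the isomorphism, cite directly that projective $B_2$-modules are saturated (\cite[Prop.~2.8]{sym2noeth}); (ii) for $\rR^i\Psi(\Phi(M))=0$ with $i>0$, observe that if $M$ is a summand of $V\otimes B_2$ then $\Phi(M)$ is a summand of $V$, which is injective in $\Rep(\Sp)$ by \cite[4.2.9]{infrank}, so $\Phi(M)$ is an injective object and hence $\Psi$-acyclic. That's the whole proof. Your version instead runs through the injectivity of $V \otimes B_2$ in $\Mod_{B_2}$ together with the local cohomology triangle $\rR\Gamma\to\id\to\rR\Sigma\to$; after checking $\rR\Gamma(V\otimes B_2)=0$ (torsion-free plus injective) and identifying $\rR\Sigma$ with $(\rR\Psi)\circ\Phi$ via exactness of $\Phi$, you obtain both conclusions at once. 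This works and rests on essentially the same $B_2$-level facts from \cite{sym2noeth}, but it is a longer path: the local cohomology machinery is unnecessary overhead when the direct observation that $\Phi(M)$ is injective gives the higher vanishing in one line. One small caveat worth noting: for the reduction to the free case you need $\rR\Psi$ to commute with direct sums; the relevant hypothesis is local noetherianity of the \emph{source} category $\Rep(\Sp)$ (so that injectives are closed under direct sums), not of $\Mod_{B_2}$ as you wrote, together with the fact that $\Psi$ itself preserves direct sums. This is true but should be attributed correctly. The paper sidesteps this by observing that direct summands of injectives are injective, which requires no commutation with infinite sums.
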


\begin{proof}
The fact that $M \to \Psi(\Phi(M))$ is an isomorphism follows from the fact that projective $B_2$-modules are saturated \cite[Proposition~2.8]{sym2noeth}. The fact that $\rR^i \Psi(\Phi(M))=0$ for $i>0$ follows from the fact that if $M=V \otimes B_2$ is a projective $B_2$-module (with $V$ a polynomial representation of $\GL$) then $\Phi(M)=V$ is injective in $\Rep(\Sp)$ \cite[4.2.9]{infrank}.
\end{proof}

\begin{lemma} \label{lem:Bnoeth-4}
The $A$-modules $\bV_{\lambda}$ and $\bV_{\lambda} \otimes A$ belong to $\cS$.
\end{lemma}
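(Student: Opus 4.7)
The plan is to deduce both assertions directly from Lemma~\ref{lem:Bnoeth-6} by exhibiting, for each of $\bV_\lambda$ and $A\otimes\bV_\lambda$, a $B$-module preimage under $\Phi$ that is projective as a $B_2$-module. Once Lemma~\ref{lem:Bnoeth-6} supplies $\Psi$ and the vanishing of the higher $\rR^i\Psi$, membership in $\cS$ reduces to a direct computation of $\Tor^B_\ast(-,\bC)$.

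For $A\otimes\bV_\lambda$ the natural preimage is $M = B \otimes \bV_\lambda$, which is $B$-free and in particular $B_2$-projective, and satisfies $\Phi(M) = M\otimes_{B_2}\bC = A\otimes\bV_\lambda$ by the computation from the proof of part (a) of the preceding lemma. Lemma~\ref{lem:Bnoeth-6} then yields $\Psi(A\otimes\bV_\lambda) = B\otimes\bV_\lambda$ and $\rR^i\Psi(A\otimes\bV_\lambda) = 0$ for $i>0$. Property (FT) is automatic here because $B\otimes\bV_\lambda$ is $B$-free: $\Tor^B_0(B\otimes\bV_\lambda,\bC) = \bV_\lambda$ is finite length and the higher Tors vanish.

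For $\bV_\lambda$, treated as an $A$-module with $A_+$ acting trivially, the key idea is to use $M = B_2 \otimes \bV_\lambda$, equipped with the $B$-module structure in which $B_2$ acts by multiplication on the first factor and $B_1 = A$ acts through the augmentation $A\to\bC$. This $M$ is $B_2$-free, and one checks that $\Phi(M) = M\otimes_{B_2}\bC = \bV_\lambda$ with trivial induced $A$-action, so $\Phi(M)$ is the intended $A$-module. Lemma~\ref{lem:Bnoeth-6} then gives $\Psi(\bV_\lambda) = B_2\otimes\bV_\lambda$ and vanishing of $\rR^i\Psi(\bV_\lambda)$ for $i>0$.

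The main work is verifying (FT) for $B_2\otimes\bV_\lambda$. Writing $B = B_1\otimes B_2$ and using that $B_2\otimes\bV_\lambda$ is $B_2$-flat, the K\"unneth formula collapses to
\[
\Tor^B_i(B_2 \otimes \bV_\lambda,\, \bC) \;=\; \Tor^{B_1}_i(\bC,\bC) \otimes \bV_\lambda \;=\; \lw^i \bV \otimes \bV_\lambda,
\]
where the last identification uses the Koszul resolution for $B_1 = \Sym(\bV)$. Although $\lw^i\bV$ is infinite dimensional, it is an irreducible polynomial $\GL$-representation, so $\lw^i\bV\otimes\bV_\lambda$ is a finite direct sum of polynomial irreducibles by Pieri, and hence of finite length as an $\Sp$-representation by \S\ref{ss:repSp}(\ref{sp:finlen}). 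Thus (FT) holds, completing the argument. The only conceptual step is identifying the correct $B_2$-projective preimage of $\bV_\lambda$, namely $B_2\otimes\bV_\lambda$ with trivial $B_1$-action; once this is in hand, everything else is a routine Tor calculation.
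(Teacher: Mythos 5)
Your proof is correct and takes essentially the same route as the paper's: you use the same $B_2$-projective preimages $B\otimes\bV_\lambda$ and $B_2\otimes\bV_\lambda$ (the latter being exactly the paper's $B_2 = B/(B_1)_+$ description), apply Lemma~\ref{lem:Bnoeth-6} identically, and verify (FT) by the same Koszul computation $\Tor^{B}_\bullet(B_2\otimes\bV_\lambda,\bC)\cong\lw^\bullet\bV\otimes\bV_\lambda$. The only difference is that you spell out the finite-length check for $\lw^i\bV\otimes\bV_\lambda$ via Pieri, which the paper leaves implicit.
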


\begin{proof}
We have $\bV_{\lambda}=\Phi(\bV_{\lambda} \otimes B_2)$; here $B_2$ is a $B$-module via $B_2=B/(B_1)_+$. Since $\bV_{\lambda} \otimes B_2$ is $B_2$-projective, we have $\Psi(\bV_{\lambda})=\bV_{\lambda} \otimes B_2$ and $\rR^i \Psi(\bV_{\lambda})=0$ for $i>0$ by Lemma~\ref{lem:Bnoeth-6}. Now, we have
\begin{displaymath}
\Tor^B_{\bullet}(B_2, \bC)=\Tor^{B_1}_{\bullet}(\bC, \bC)=\lw^{\bullet}(\bV),
\end{displaymath}
and so $B_2$ satisfies (FT) as a $B$-module; the same reasoning applies to $\bV_{\lambda} \otimes B_2$. We thus see that $\Psi(\bV_{\lambda})$ satisfies (FT), and so $\bV_{\lambda}$ belongs to $\cS$.

We have $\bV_{\lambda} \otimes A = \Phi(\bV_{\lambda} \otimes B)$. Since $\bV_{\lambda} \otimes B$ is $B_2$-projective, we have $\Psi(\bV_{\lambda} \otimes A) = \bV_{\lambda} \otimes B$ and $\rR^i \Psi(\bV_{\lambda} \otimes A)=0$ for $i>0$ by Lemma~\ref{lem:Bnoeth-6}. Since $\bV_{\lambda} \otimes B$ is finitely generated and projective, it satisfies (FT). Thus $\bV_{\lambda} \otimes A \in \cS$.
\end{proof}

\begin{lemma} \label{lem:Bnoeth-5}
Every finitely generated $A$-module belongs to $\cS$.
\end{lemma}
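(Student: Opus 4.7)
The plan is to combine Theorem~\ref{thm:tri} with induction on the injective dimension $d$ of $T(M)$ in $\Mod_A^{\gen} \cong \Rep(H)$; this dimension is finite for any finitely generated $A$-module $M$ by Corollary~\ref{cor:gen}(a),(b). From Theorem~\ref{thm:tri} one has the four-term exact sequence
\begin{displaymath}
0 \to \Gamma(M) \to M \to \Sigma(M) \to \rR^1 \Gamma(M) \to 0,
\end{displaymath}
whose outer terms are finite length torsion $A$-modules. The argument therefore reduces to establishing (i) every finite length torsion $A$-module lies in $\cS$, and (ii) $\Sigma(M) \in \cS$; two applications of Lemma~\ref{lem:Bnoeth-3} then yield $M \in \cS$.

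For (i), I would filter a finite length torsion module by the powers of $A_+$ to reduce (via Lemma~\ref{lem:Bnoeth-3}) to the case where $A_+$ acts trivially, and then use a composition series in $\Rep(\Sp)$ to reduce further to showing that each simple $\bS_{[\lambda]}\bV$, viewed as an $A/A_+$-module, belongs to $\cS$. For this I induct on $|\lambda|$: Lemma~\ref{lem:Bnoeth-4} gives $\bV_\lambda \in \cS$, and \S\ref{ss:repSp}(\ref{sp:simples}) shows that in $\Rep(\Sp)$ the constituents of $\bV_\lambda$ consist of a single copy of $\bS_{[\lambda]}\bV$ together with copies of $\bS_{[\mu]}\bV$ for various $|\mu|<|\lambda|$. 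Iterating Lemma~\ref{lem:Bnoeth-3} along the composition series, using the inductive hypothesis to absorb the smaller simples, pins down $\bS_{[\lambda]}\bV \in \cS$.

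For (ii), I induct on $d$. If $T(M)=0$ there is nothing to do (since $\Sigma(M)=0$), and if $T(M)$ is injective in $\Rep(H)$ ($d=0$), then $T(M) \cong T(V_0 \otimes A)$ for some finite length polynomial representation $V_0$ by Corollary~\ref{cor:gen}(c), so $\Sigma(M) = V_0 \otimes A \in \cS$ by Corollary~\ref{cor:gen}(d) and Lemma~\ref{lem:Bnoeth-4}. For $d \ge 1$, I embed $T(M) \hookrightarrow T(V_0 \otimes A)$ into an injective of $\Rep(H)$ with $V_0$ a finite length polynomial representation; the cokernel $Q$ has injective dimension at most $d-1$ by standard homological algebra. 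Applying the saturation functor produces a short exact sequence $0 \to \Sigma(M) \to V_0 \otimes A \to N \to 0$ in $\Mod_A$ with $T(N) = Q$. The inductive hypothesis applied to the finitely generated $A$-module $N$ gives $N \in \cS$, and Lemma~\ref{lem:Bnoeth-3} combined with $V_0 \otimes A \in \cS$ then delivers $\Sigma(M) \in \cS$. The main technical hurdle is the nested induction on $|\lambda|$ in step (i): the envelope $\bV_\lambda$ provided by Lemma~\ref{lem:Bnoeth-4} must be unwound into its simple constituents using only the 2-out-of-3 principle of Lemma~\ref{lem:Bnoeth-3}, and all subsequent reductions depend on this base being in place.
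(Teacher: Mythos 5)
Your proposal is correct and, for the main induction, matches the paper's proof exactly: the paper also inducts on the injective dimension of $T(M)$, embeds $T(M)$ into a finite length injective $I$ of $\Mod_A^{\gen}$, applies $S$, and then feeds the resulting exact sequence together with the four-term sequence from Theorem~\ref{thm:tri} into the two-out-of-three principle of Lemma~\ref{lem:Bnoeth-3}. (Formally the paper's inductive hypothesis is ``every finitely generated $M'$ with $\operatorname{injdim} T(M')<d$ lies in $\cS$,'' which is what you need when you assert $N\in\cS$; your phrasing ``(i) and (ii), then combine'' slightly obscures that the induction must carry the whole statement, not just the claim about $\Sigma$, but this is purely organizational.)

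Where you diverge is in the preliminary step, showing that finite length torsion $A$-modules lie in $\cS$. You reduce to $A/A_+$-modules, then to simples $\bS_{[\lambda]}\bV$, and finally run a descending argument on a composition series of $\bV_\lambda$, using induction on $|\lambda|$ and the constituent bound from \S\ref{ss:repSp}(\ref{sp:simples}); this implicitly uses that $\bS_{[\lambda]}\bV$ occurs with multiplicity exactly one in $\bV_\lambda$ (true, since it is the socle and the $\lambda$-weight space is one-dimensional, but worth making explicit since otherwise the peeling would jam). The paper instead takes a \emph{finite} injective resolution of $M$ in $\Rep(\Sp)$---available because finite length objects of $\Rep(\Sp)$ have finite injective dimension by \S\ref{ss:repSp}(\ref{sp:inj})---whose terms are sums of $\bV_\mu$'s, already in $\cS$ by Lemma~\ref{lem:Bnoeth-4}; breaking the resolution into short exact sequences and applying Lemma~\ref{lem:Bnoeth-3} yields $M\in\cS$ directly, bypassing the composition-series bookkeeping and the multiplicity-one observation. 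Both arguments rest on the same essential input (namely $\bV_\lambda\in\cS$ as an $A/A_+$-module), so your route is a valid if slightly more labored alternative.
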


\begin{proof}
This follows from Lemmas~\ref{lem:Bnoeth-3} and~\ref{lem:Bnoeth-4} since the objects $\bV_{\lambda}$ and $\bV_{\lambda} \otimes A$ generate $\rD^b_{\fgen}(\Mod_A)$ (Corollary~\ref{cor:der-gen}). A more detailed argument follows.

Suppose $M$ is a finitely generated $A$-module with $A_+M=0$. Let $M \to I^{\bullet}$ be a finite length resolution in $\Rep(\Sp)$ where each $I^k$ is a finite length injective. Regard each $I^k$ as an $A/A_+$-module. Since each $I^k$ belongs to $\cS$ by Lemma~\ref{lem:Bnoeth-4}, we conclude that $M$ belongs to $\cS$ by Lemma~\ref{lem:Bnoeth-3}.

Now let $M$ be an arbitrary finite length $B$-module. We show that $M \in \cS$ by induction on the injective dimension of $T(M) \in \Mod_A^{\gen}$. Assume $M$ is non-zero. We can then find an exact sequence
\begin{displaymath}
0 \to T(M) \to I \to N \to 0
\end{displaymath}
in $\Mod_A^{\gen}$, where $I$ is a finite length injective and $N$ has smaller injective dimension than $T(M)$ (using the convention that the zero module has injective dimension $-\infty$). Applying $S$, we obtain an exact sequence
\begin{displaymath}
0 \to \Sigma(M) \to S(I) \to N' \to 0,
\end{displaymath}
where $N'$ is the image of $S(I)$ in $S(N)$. Since $T(N')=N$ has smaller injective dimension than $T(M)$, it follows that $N'$ belongs to $\cS$ by the inductive hypothesis. Since $S(I)$ is a finitely generated torsion-free injective $A$-module, it belongs to $\cS$ by Lemma~\ref{lem:Bnoeth-4}. Thus $\Sigma(M)$ belongs to $\cS$ by Lemma~\ref{lem:Bnoeth-3}. Now, we have an exact sequence
\begin{displaymath}
0 \to \Gamma(M) \to M \to \Sigma(M) \to \rR^1 \Gamma(M) \to 0.
\end{displaymath}
Since $\Gamma(M)$ and $\rR^1 \Gamma(M)$ are finite length $A$-modules, they belong to $\cS$ by the previous paragraph and Lemma~\ref{lem:Bnoeth-3}. Thus $M$ belongs to $\cS$ by Lemma~\ref{lem:Bnoeth-3}. This completes the proof.
\end{proof}

\begin{proof}[Proof of Theorem~\ref{thm:Bnoeth}]
It suffices to show that $M=\bV_{\lambda} \otimes B$ is a noetherian $B$-module for all $\lambda$. Let $N$ be a submodule. Then $\Phi(N)$ is an $A$-submodule of $\Phi(M)=\bV_{\lambda} \otimes A$. Since $\Phi(M)$ is finitely generated, it follows that $\Phi(N)$ is as well. Thus $\Phi(N)$ belongs to $\cS$ by Lemma~\ref{lem:Bnoeth-5}, and so $\Psi(\Phi(N))$ satisfies (FT). Since $M$ is $B_2$-projective, it has no $B_2$-torsion, and so the same is true for $N$. It follows that we have an exact sequence
\begin{displaymath}
0 \to N \to \Psi(\Phi(N)) \to T \to 0,
\end{displaymath}
where $T$ is $B_2$-torsion. Since $\Psi(\Phi(N))$ is finitely generated, so is $T$, and so $T$ satisfies (FT) by Lemma~\ref{lem:Bnoeth-2}. Thus $N$ satisfies (FT), and is therefore finitely generated. Thus $M$ is noetherian.
\end{proof}

\subsection{Some additional results}

We keep the notation $B=\Sym(\bV \oplus \lw^2{\bV})$. Let $\Mod_B^0$ be the full subcategory of $\Mod_B$ spanned by modules supported at~0 (i.e., every element is annihilated by a power of $B_+$), and let $\Mod_B^{\gen}$ be the generic category for $B$ (i.e., the Serre quotient of $\Mod_B$ by the torsion subcategory).

\begin{proposition}
We have an equivalence $\Mod_B^{\gen} \cong \Mod_B^0$.
\end{proposition}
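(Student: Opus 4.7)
The plan is to chain three equivalences:
\[
\Mod_B^{\gen} \cong \Rep(H) \cong \Mod_B^{\lf} = \Mod_B^0.
\]
The middle equivalence is Theorem~\ref{thm:Htca}, and the identification $\Mod_B^{\lf} = \Mod_B^0$ holds for any connected tca: simple $B$-modules are of the form $\bV_\lambda$ with $B_+$ acting by zero (since $M/B_+M \ne 0$ for any nonzero $M$, as $B_+$ strictly raises Schur-degree), so finite length modules are annihilated by a power of $B_+$, and conversely any module annihilated by $B_+^n$ is locally of finite length since $B/B_+^n$ is a finite length polynomial $\GL$-representation. The substantive content is therefore the first equivalence, which I would establish by exhibiting an exact Serre localization $\Phi_B \colon \Mod_B \to \Rep(H)$ with kernel $\Mod_B^{\tors}$.

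I would define $\Phi_B(M) = M \otimes_{B,\phi} \bC$, where $\phi \colon B \to \bC$ is the algebra homomorphism sending $v \in \bV$ to $\xi(v)$ and $w \in \lw^2 \bV$ to $\omega(w)$; the corresponding point $(\xi,\omega) \in \Spec B$ has stabilizer $H$ in $\GL$, so $\Phi_B(M)$ carries a natural $H$-action. The factorizations $B = A \otimes B_2$ and $\bC_\phi \cong \bC_\xi \otimes \bC_\omega$ yield a canonical isomorphism $\Phi_B \cong \Phi_A \circ \wt{\Phi}$, where $\wt{\Phi}$ is as in \S\ref{ss:noeth-prop} and $\Phi_A$ is as in Theorem~\ref{thm:gen}. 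Consequently $\Phi_B$ is exact with right adjoint $\Psi_B = \wt{\Psi} \circ \Psi_A$, and the counit $\Phi_B \Psi_B \to \mathrm{id}_{\Rep(H)}$ is an isomorphism: the counit $\wt{\Phi}\wt{\Psi} \to \mathrm{id}_{\Mod_A}$ is iso because $\wt{\Phi}$ induces the Serre equivalence $\Mod_B/(\text{$B_2$-torsion}) \cong \Mod_A$, and $\Phi_A\Psi_A \to \mathrm{id}$ is Theorem~\ref{thm:gen}(c). Once we also know $\ker(\Phi_B) = \Mod_B^{\tors}$, Gabriel's theorem (as used for Theorem~\ref{thm:gen}(d)) produces the desired equivalence $\Mod_B^{\gen} \cong \Rep(H)$.

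The remaining step is the identification $\ker(\Phi_B) = \Mod_B^{\tors}$, which I expect to be the main obstacle. For $\Mod_B^{\tors} \subseteq \ker(\Phi_B)$ I would treat finitely generated $M$ first: since $B$ is a polynomial ring (a domain) and $M$ is torsion, the ordinary annihilator $\mathrm{Ann}_B(M)$ is a nonzero ideal, and is $\GL$-stable by equivariance of $M$. The key lemma is that $\bigcap_{g \in \GL} g\mathfrak{p} = 0$ for $\mathfrak{p} = \ker\phi$: any $b \in B$ involves only vectors in some $\bV_{\le n} \oplus \lw^2 \bV_{\le n}$, and the $\GL_{2n}$-orbit of $(\xi|,\omega|)$ is Zariski dense therein because $\GL_{2n}$ acts transitively on nondegenerate alternating forms, and the stabilizer of such a form acts transitively on nonzero covectors. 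Hence $\mathrm{Ann}_B(M) \not\subseteq \mathfrak{p}$; choosing $b \in \mathrm{Ann}_B(M)$ with $\phi(b) \neq 0$, we get $\phi(b)\,\bar m = \overline{bm} = 0$ in $\Phi_B(M)$ for every $\bar m$, so $\Phi_B(M) = 0$. Filtered colimits handle arbitrary $M$. Conversely, if $M$ is finitely generated with $\Phi_B(M) = M/\mathfrak{p} M = 0$, then Nakayama in the local ring $B_\mathfrak{p}$ gives $M_\mathfrak{p} = 0$, so every element has nonzero annihilator and $M \in \Mod_B^{\tors}$; general $M$ reduce to this by exactness of $\Phi_B$ on finitely generated subobjects.
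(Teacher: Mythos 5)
Your overall route --- factor through $\Rep(H)$, then use Theorem~\ref{thm:Htca} to get to $\Mod_B^{\lf}=\Mod_B^0$ --- coincides with the paper's, and the observation $\Mod_B^{\lf}=\Mod_B^0$ (simples are killed by $B_+$, and $B/B_+^n$ is finite length) is correct. The real divergence is in how $\Mod_B^{\gen}\cong\Rep(H)$ is obtained. The paper makes this step nearly free: it observes that $\Mod_B^{\gen}$ is an iterated Serre quotient, first by $B_2$-torsion (giving $\Mod_A$ via the adjunction $(\wt\Phi,\wt\Psi)$ from \S\ref{ss:noeth-prop}) and then by torsion (giving $\Mod_A^{\gen}$), and then applies Theorem~\ref{thm:gen}(d). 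You instead build a single localization $\Phi_B$ from scratch, which forces you to re-establish exactness, the counit isomorphism, and the kernel identification $\ker\Phi_B=\Mod_B^{\tors}$. This is more work, and two steps in that extra work have genuine gaps.

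First, for $\Mod_B^{\tors}\subseteq\ker\Phi_B$, you assert that a finitely generated torsion $B$-module $M$ has $\operatorname{Ann}_B(M)\ne 0$ ``since $B$ is a domain.'' That reasoning only works if $M$ is finitely generated as a non-equivariant $\vert B\vert$-module (finite intersection of nonzero ideals in a domain), but equivariant finite generation only gives finitely many $\GL$-orbits of generators, i.e.\ infinitely many ordinary generators. The fact you want is true but not for the reason you give; the paper invokes \cite[Corollary~2.3]{sym2noeth} for the analogous statement over $B_2$, and you would need to cite (or reprove) something of that sort, using equivariant noetherianity of $B$. Your Zariski-density argument for $\bigcap_g g\fp=0$ is, by contrast, fine.

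Second, and more seriously, for $\ker\Phi_B\subseteq\Mod_B^{\tors}$ you apply Nakayama in $B_\fp$ to conclude $M_\fp=0$ from $M/\fp M=0$. Nakayama requires $M_\fp$ to be a finitely generated $B_\fp$-module, and again this does not follow from equivariant finite generation of $M$. The paper confronts exactly this difficulty in the $A$-case (Lemma~\ref{lem:nak}), where a rather delicate argument using the specialization functors $\Gamma_n$, the symmetric-group structure, and the freeness from Theorem~\ref{thm:loc} is needed to make a Nakayama-type conclusion. No analog of that argument is supplied here, and simply citing ``Nakayama in the local ring'' does not suffice. A cleaner fix is to avoid this direction entirely by using your own factorization $\Phi_B\cong\Phi_A\circ\wt\Phi$ together with $\ker\Phi_A=\Mod_A^{\tors}$ (Theorem~\ref{thm:gen}(b)) and the identification $\Mod_B/\Mod_B^{B_2\text{-tors}}\cong\Mod_A$ --- which is, in effect, to revert to the paper's iterated-quotient argument.
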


\begin{proof}
The category $\Mod_B^{\gen}$ can be obtained by first forming the quotient of $\Mod_B$ by the subcategory of modules that are $B_2$-torsion, and then forming the quotient of the result by the subcategory of torsion modules. The first quotient gives $\Mod_A$ by the discussion in \S\ref{ss:noeth-prop}, so this gives an equivalence $\Mod_B^{\gen} \cong \Mod_A^{\gen}$. We have seen that $\Mod_A^{\gen}$ is equivalent to $\Rep(H)$ (Theorem~\ref{thm:gen}(d)), which is equivalent to $\Mod_B^0$ (Theorem~\ref{thm:Htca}).
\end{proof}

\begin{proposition}
Every projective $B$-module is injective.
\end{proposition}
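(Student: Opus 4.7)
The plan is to reduce, via the adjunction between $\Mod_B$ and $\Mod_A$ from \S\ref{ss:noeth-prop}, to the injectivity of $V \otimes A$ in $\Mod_A$ already established in Proposition~\ref{prop:inj-tors-free}.

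First I would observe that every projective object of $\Mod_B$ is a direct summand of some (possibly infinite) direct sum of modules of the form $P_B(V) = V \otimes B$ with $V \in \Rep^{\pol}(\GL)$. Since $B$ is noetherian by Theorem~\ref{thm:Bnoeth}, the category $\Mod_B$ is locally noetherian, so arbitrary direct sums of injectives are injective; combined with the fact that summands of injectives are injective, this reduces the claim to showing that $V \otimes B$ is injective in $\Mod_B$ for each polynomial representation $V$.

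For this core step I would use the adjoint pair $(\Phi, \Psi)$ from the lemma of \S\ref{ss:noeth-prop}. The functor $\Phi$ is exact — under the equivalence $\Mod_K \cong \Rep(\Sp)$ it is identified with the localization $M \mapsto K \otimes_{B_2} M$ at $K = \Frac(B_2)$ — so by the standard adjunction formula $\Hom_B(-, \Psi(I)) \cong \Hom_A(\Phi(-), I)$, its right adjoint $\Psi$ carries injective objects of $\Mod_A$ to injective objects of $\Mod_B$. Writing $B = B_1 \otimes B_2$ with $B_1 = \Sym(\bV)$, the description $\Phi(M) = M \otimes_{B_2} \bC$ gives $\Phi(V \otimes B) = V \otimes B_1 = V \otimes A$. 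On the other hand, $V \otimes B = (V \otimes B_1) \otimes B_2$ is a projective $B_2$-module, so Lemma~\ref{lem:Bnoeth-6} supplies a natural isomorphism
\[
V \otimes B \xrightarrow{\ \sim\ } \Psi(\Phi(V \otimes B)) = \Psi(V \otimes A).
\]
Combining this with Proposition~\ref{prop:inj-tors-free}, which asserts that $V \otimes A$ is injective in $\Mod_A$, yields that $V \otimes B \cong \Psi(V \otimes A)$ is injective in $\Mod_B$, as required.

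I expect no serious obstacle: all the substantive content has been proved elsewhere in the paper. The plan is really just to assemble the known inputs — exactness of $\Phi$, saturation of $B_2$-projective modules (Lemma~\ref{lem:Bnoeth-6}), injectivity of $V \otimes A$ in $\Mod_A$ (Proposition~\ref{prop:inj-tors-free}), and noetherianity of $B$ (Theorem~\ref{thm:Bnoeth}) — in the right order.
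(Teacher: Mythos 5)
Your proposal is correct and follows essentially the same route as the paper: reduce to $V \otimes B$, invoke exactness of $\Phi$ so that $\Psi$ preserves injectives, identify $V \otimes B$ with $\Psi(V \otimes A)$ via Lemma~\ref{lem:Bnoeth-6}, and conclude from Proposition~\ref{prop:inj-tors-free}. The only difference is that you spell out the reduction from an arbitrary projective to $V \otimes B$ in slightly more detail than the paper's terse parenthetical.
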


\begin{proof}
Let $V$ be a polynomial representation. We must show that $V \otimes B$ is injective in $\Mod_B$ (every projective $B$-module is of this form). We know that $V \otimes A$ is injective in $\Mod_A$ (Proposition~\ref{prop:inj-tors-free}), and so $\Psi(V \otimes A)$ is injective in $\Mod_B$, since $\Psi$ is right adjoint to the exact functor $\Phi$. Since $V \otimes A = \Phi(V \otimes B)$, we have $\Psi(V \otimes A)=\Psi(\Phi(V \otimes B))=V \otimes B$ (Lemma~\ref{lem:Bnoeth-6}), and the result follows.
\end{proof}

\begin{remark}
Analogs of these propositions are known for $\Sym(\lw^2{\bV})$ and $\Sym(\Sym^2{\bV})$ \cite{sym2noeth} and $\Sym(\bV^{\oplus n})$ \cite{symc1,symu1}.
\end{remark}

\section{Further remarks} \label{s:further}

\subsection{A twisted Lie algebra} \label{ss:lie}

Let $\fg=\bV \oplus \lw^2{\bV}$. We give $\fg$ the structure of a Lie algebra as follows: (a) for $v,w \in \bV$, we put $[v,w]=v \wedge w \in \lw^2{\bV}$; and (b) all elements of $\lw^2{\bV}$ are central. Since the Lie bracket is $\GL$-equivariant, it follows that $\fg$ is a Lie algebra object in the category $\Rep^{\pol}(\GL)$. We write $\Mod_{\fg}$ for the category of $\fg$-modules internal to $\Rep^{\pol}(\GL)$.

\begin{theorem}
The category $\Mod_{\fg}$ is locally noetherian.
\end{theorem}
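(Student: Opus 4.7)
The plan is to reduce local noetherianity of $\Mod_{\fg}$ to the noetherianity of the tca $B = \Sym(\bV \oplus \lw^2{\bV})$ (Theorem~\ref{thm:Bnoeth}) via the standard ``noetherian associated graded $\Rightarrow$ noetherian'' principle for filtered algebras. Let $\cU = \cU(\fg)$, regarded as an associative algebra object in $\Rep^{\pol}(\GL)$. Since $\fg$-modules in $\Rep^{\pol}(\GL)$ are the same as $\cU$-modules in $\Rep^{\pol}(\GL)$, it suffices to prove that $\cU$ is noetherian in the tca sense: every $\cU$-submodule of a module generated by a finite-length polynomial subrepresentation is again so generated.

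I would begin by equipping $\cU$ with its PBW filtration $F_0 \subset F_1 \subset \cdots$, where $F_p \cU$ is the image of $\fg^{\otimes \le p}$. Because $\fg$ is $2$-step nilpotent and we are in characteristic zero, the relation $vw - wv = v \wedge w$ has filtration degree strictly smaller than $vw$, so commutators vanish in the associated graded, and PBW yields a canonical $\GL$-equivariant isomorphism of graded algebras $\mathrm{gr}\, \cU \cong \Sym(\fg) = B$. Given a finitely generated $\cU$-module $M$ and a finite-length polynomial subrepresentation $V \subset M$ generating it, I then set $F_p M = (F_p \cU) \cdot V$, so that $\mathrm{gr}\, M$ is a $B$-module generated by the image of $V$, and hence finitely generated over $B$.

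Now let $N \subset M$ be a $\cU$-submodule, endowed with the induced filtration $F_p N = N \cap F_p M$. Then $\mathrm{gr}\, N$ injects as a $B$-submodule of $\mathrm{gr}\, M$, so by Theorem~\ref{thm:Bnoeth} it is finitely generated as a $B$-module. I would choose a finite-length polynomial subrepresentation $W \subset \mathrm{gr}\, N$ generating it over $B$, and use the semisimplicity of $\Rep^{\pol}(\GL)$ to lift $W$ to a $\GL$-stable finite-length subrepresentation $\widetilde{W} \subset N$ (concretely, for each simple summand of $W$ in graded piece $p$, pick a $\GL$-equivariant splitting of $F_p N \twoheadrightarrow (\mathrm{gr}\, N)_p$). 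A routine induction on the filtration degree shows that $\widetilde{W}$ generates $N$ as a $\cU$-module, so $N$ is finitely generated. The only non-formal steps are the PBW identification $\mathrm{gr}\, \cU \cong B$ (elementary here because $\fg$ is $2$-step nilpotent) and the equivariant lifting (immediate from semisimplicity), so I do not expect any substantive obstacle beyond invoking Theorem~\ref{thm:Bnoeth}; hence $\cU$ is noetherian and $\Mod_{\fg}$ is locally noetherian.
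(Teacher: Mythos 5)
Your proof is correct and follows essentially the same route as the paper: PBW filtration on $\cU(\fg)$, identification of $\operatorname{gr}\cU(\fg)$ with $B$, pass to the induced filtration on a submodule, invoke noetherianity of $B$ (Theorem~\ref{thm:Bnoeth}), and lift generators back up via the standard filtered-module argument. The only cosmetic difference is that the paper phrases the argument for the free modules $\bV_\lambda \otimes \cU(\fg)$ (with $F_p M = \bV_\lambda \otimes F_p\cU(\fg)$) and then reduces the general case to these, whereas you filter an arbitrary finitely generated $M$ directly by $(F_p\cU)\cdot V$ for a chosen generating subrepresentation $V$; both work equally well.
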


\begin{proof}
Let $F_n \cU(\fg)$ be the image of the natural map $\bigoplus_{i=0}^n \fg^{\otimes i} \to \cU(\fg)$. This defines a $\GL$-stable filtration $F_0{\cU(\fg)} \subset F_1{\cU(\fg)} \subset \cdots$ on $\cU(\fg)$ for which the associated graded is isomorphic to $B$. Now let $M=\bV_{\lambda} \otimes \cU(\fg)$, and define $F_i M=\bV_{\lambda} \otimes F_i\cU(\fg)$. Then $\gr(M) \cong \bV_{\lambda} \otimes B$. Let $N$ be a $\cU(\fg)$-submodule of $M$. Then $N$ inherits a filtration via $F_i N = N \cap F_iM$, and the natural map $\gr(N) \to \gr(M)$ is an injection of $B$-modules. Since $\gr(M)$ is a noetherian $B$-module (Theorem~\ref{thm:tca}), it follows that $\gr(N)$ is finitely generated as a $B$-module. Let $k$ be such that $\gr(N)$ is generated by $\gr_0(N), \ldots, \gr_k(N)$. Then a standard argument shows that $N$ is generated as a $\cU(\fg)$-module by $F_kN$, which is a finite length polynomial representation. Thus $N$ is finitely generated, and so $M$ is noetherian as a $\cU(\fg)$-module. Since every finitely generated $\fg$-module is a quotient of a finite direct sum of modules of the form $\bV_\lambda \otimes \cU(\fg)$, we see that any such module is noetherian.
\end{proof}

\subsection{Complementary examples}

In this paper, we have studied $\Sp$-equivariant modules over $\Sym(\bV)$. We can also consider the parallel situation of $\bO$-equivariant modules. To be somewhat precise, let $\beta$ be the symmetric bilinear form on $\bV$ defined by $\beta(e_i, f_j)=\delta_{i,j}$, and $\beta(e_i,e_j)=\beta(f_i,f_j)=0$, and let $\bO \subset \GL$ be the subgroup preserving $\beta$. We then have a category $\Rep(\bO)$ of algebraic representations of $\bO$, as studied in \cite{infrank}. Let $A'$ be the algebra $\Sym(\bV)$ in the category $\Rep(\bO)$. We can then consider $A'$-modules in the category $\Rep(\bO)$. Much of the above discussion extends to this setting. We define a linear form $\xi \colon \bV \to \bC$ by $\xi(e_i) = \xi(f_i) = 1$ for all $i$ and let $H'$ be the stabilizer of $\xi$ in $\bO$. Then one can show that the category of generic $A'$-modules is equivalent to the category of algebraic representations of $H'$ and use this to prove that the tca $\Sym(\bV \oplus \Sym^2 \bV)$ is noetherian.

There is one difference between $A$ and $A'$ to note. The graded pieces of $A$ are irreducible as $\Sp$-representations, but the graded pieces of $A'$ are not irreducible as $\bO$-representations. However, one can show that any non-zero $\bO$-stable ideal of $A'$ contains a power of $A'_+$, and so this difference does not affect too much.

There are, in fact, two additional examples one can consider. In \cite{infrank}, we also study algebraic representations of $\Sp$ and $\bO$ on pro-finite vector spaces. Let $\hat{\bV}$ be the dual space of $\bV$, which we can identify with an infinite product of $\bC$'s. The groups $\Sp$ and $\bO$ act on $\hat{\bV}$, and we can consider representations appearing in tensor powers (using a completed tensor product). We denote these categories by $\wh{\Rep}(\Sp)$ and $\wh{\Rep}(\bO)$. In fact, $\wh{\Rep}(\Sp)$ is equivalent to the opposite category of $\Rep(\Sp)$, and $\wh{\Rep}(\bO)$ is equivalent to the opposite category of $\Rep(\bO)$. We have algebra objects $\Sym(\hat{\bV})$ in both $\wh{\Rep}(\Sp)$ and $\wh{\Rep}(\bO)$, and one can consider their module categories. We have not carefully considered these examples, but expect that the methods of this paper could be used to study them. There is one new phenomenon worth pointing out: the representation $\Sym^2(\hat{\bV})$ of $\bO$ has an invariant element, and this generates a principal ideal of $\Sym(\hat{\bV})$ that does not contain any power of the maximal ideal.

One can also consider a variant of the Lie algebra from \S \ref{ss:lie}, as follows. Define the free 2-step nilpotent twisted Lie superalgebra $\fg' = \bV \oplus \Sym^2(\bV)$ in the category $\Rep^\pol(\GL)$. This is transpose dual to the twisted Lie algebra $\fg$, and hence we immediately deduce that $\Mod_{\fg'}$ is locally noetherian. (Recall that the transpose functor is the auto-equivalence of $\Rep^{\pol}(\GL)$ that takes $\bV_{\lambda}$ to $\bV_{\lambda^{\dag}}$, where $\lambda^{\dag}$ denotes the transposed partition. It is a non-symmetric monoidal equivalence. See \cite[\S 7.4]{expos}). Similarly, the twisted skew-commutative algebra $\lw(\bV) \otimes \Sym(\Sym^2 \bV)$ is transpose dual to the tca $B$ and its category of modules is also locally noetherian, while $\lw(\bV) \otimes \Sym(\lw^2 \bV)$ is transpose dual to the tca $\Sym(\bV \oplus \Sym^2 \bV)$ so its category of modules is locally noetherian too.

\subsection{Koszul duality}

In \cite{symc1}, we studied Koszul duality for the algebra object $\Sym(\bV)$ in $\Rep^{\pol}(\GL)$. The standard Koszul duality between $\Sym$ and $\lw$ induces a derived equivalence between $\Sym(\bV)$-modules in $\Rep^{\pol}(\GL)$ and $\lw(\bV)$-comodules in $\Rep^{\pol}(\GL)$. Composing this with the duality functor on $\Rep^{\pol}(\GL)$ and the transpose functor converts $\lw(\bV)$-comodules back to $\Sym(\bV)$-modules. In this way, one gets a contravariant derived auto-equivalence of the category of $\Sym(\bV)$-modules. We showed that this functor preserves the bounded finitely generated derived category.

One can carry out an analogous process in the present setting. Consider the algebra $\Sym(\bV)$ in $\Rep(\Sp)$. Koszul duality gives a derived equivalence between graded $\Sym(\bV)$-modules in $\Rep(\Sp)$ and graded $\lw(\bV)$-comodules in $\Rep(\Sp)$. Duality converts $\lw(\bV)$-comodules in $\Rep(\Sp)$ to $\lw(\hat{\bV})$-modules in $\wh{\Rep}(\Sp)$. Transpose now converts $\lw(\hat{\bV})$-modules in $\wh{\Rep}(\Sp)$ to $\Sym(\hat{\bV})$-modules in $\wh{\Rep}(\bO)$. We thus obtain a derived equivalence:
\begin{displaymath}
\{ \text{graded $\Sym(\bV)$-modules in $\Rep(\Sp)$} \} \leftrightarrow
\{ \text{graded $\Sym(\hat{\bV})$-modules in $\wh{\Rep}(\bO)$} \}.
\end{displaymath}
Similarly, we obtain a derived equivalence
\begin{displaymath}
\{ \text{graded $\Sym(\bV)$-modules in $\Rep(\bO)$} \} \leftrightarrow
\{ \text{graded $\Sym(\hat{\bV})$-modules in $\wh{\Rep}(\Sp)$} \}.
\end{displaymath}
(We note that objects of $\Rep^{\pol}(\GL)$ are canonically graded, which is why we did not need to include a grading in that setting.)

\end{document}